\numberwithin{equation}{section}
\newtheorem{theorem}{Theorem}[section]
\newtheorem{lemma}[theorem]{Lemma}%[section] 
\newtheorem{corollary}[theorem]{Corollary}%[section]
\newtheorem{propos}[theorem]{Proposition}%[section]
\newtheorem{definition}[theorem]{Definition}%[section]
\newtheorem{proof}{Proof}
\newtheorem{remark}[theorem]{Remark}%[section]
\newtheorem{example}[theorem]{Example}%[section]
\def\Im{\operatorname{Im}}
\def\supp{\operatorname{supp}}
\newcommand{\ABK}{\mathscr {A\!\!B\!\!K}}
\title{Method of similar operators in harmonious Banach spaces}
\author{Anatoly G.~Baskakov, Ilya A.~Krishtal, and Natalia B.~Uskova}
\begin{document}

\maketitle

\begin{abstract}
We consider similarity transformations of a perturbed linear operator
$A-B$ in a complex Banach space $\mathcal{X}$, where the unperturbed operator $A$
is a generator of a Banach $L_1(\mathbb{R})$-module and the perturbation operator $B$ is a bounded linear operator. The result of the transformation is a simpler operator $A-B_0$. For example, if $A$ is a differentiation operator and $B$ is an operator of multiplication by an operator-valued function, then
 $B_0$ is an operator of multiplication by a function that is a restriction of an entire function of exponential type and could be $0$ in some cases. As a consequence, we derive the spectral invariance of the operator $A-B$ in a large class of spaces. The study is based on a widely applicable modification of the method of similar operators that is also presented in the paper. This non-traditional modification is rooted in the spectral theory of Banach $L_1(\mathbb{R})$-modules.
\end{abstract}

\markright{Method of similar operators in harmonious Banach spaces}

\section{Introduction}

Similarity transformations constitute a well-established technique that has enjoyed enduring relevance across the expanse of mathematical exploration.
In the realm of  perturbation theory \cite{BS93, DS88III, Ka76, Sh16}, they find their main application in the study of spectral properties of linear operators in a diverse array of classes (see, e.~g., \cite{B16, C85,  F65p, K18, KS18, M06, SYY20, S83}). Various methods utilizing similarity transformations  have a long history (see, e.~g., \cite{C85, DL57, F65p}) and an abundant assortment of adaptations tailored to address specific contexts (see \cite[and reference therein]{B87, B95, BDS11, KS18, SYY20, U00}). One of these methods within the purview of perturbation theory can be extremely powerful but is currently often overlooked by the researchers in the field. It is commonly referred to as the method of similar operators and in this semi-expository paper, we describe its contemporary advancements in a fairly general setting of Banach $L_1(\mathbb{R})$-modules. This setting, drawing on a multitude of tools of functional and harmonic analysis, allows one to create a comprehensive theory that can be used in a vast variety of applications.

 It is noteworthy that in narrower contexts, such as that of Hilbert spaces, modifications of the method of similar operators can yield more precise outcomes (as demonstrated, for example, in \cite{BDS11, BKU19, BP16, BP17}). In other specialized settings, different perturbation theory methods for analyzing the spectral properties of linear operators may offer results of comparable or heightened quality \cite{Ka67, M88, MM82, MM84, MS17, SD77, SD79, Sh16}. 
 The core emphasis of this paper lies in the presentation of a version of the method of similar operators that strikes a balance between generality and adaptability, rendering it readily applicable in a wide array of important scenarios.
 
 Throughout this exposition, we present an improvement of some of the established  results within the method (illustrated by, for instance, Theorem \ref{baskth3.1}), offer more streamlined proofs for select applications (exemplified in Theorem \ref{hypcaussim}), and obtain new results (such as Corollary \ref{maincor} and Theorem \ref{baskth6.1}). 
 
 To offer a taste of the outcomes of the application of the method of similar operators appearing below, we mention the following two results. Theorem \ref{baskth12} can be interpreted as a method of reduction of certain non-autonomous abstract initial value problems to autonomous ones. Theorem \ref{mathrth3} establishes similarity of certain unbounded triangular matrices to their main diagonal. It is remarkable that these two theorems follow from essentially the same more general result.
 
Let $\mathcal{X}$ be a complex Banach space and  $\mathrm{End}\,\mathcal{X}$ be a Banach algebra of all bounded linear operators in $\mathcal{X}$. We consider a closed linear operator $A: D(A)\subset\mathcal{X}\to\mathcal{X}$ such that the operator $iA$ is a generator of a strongly continuous isometric operator semigroup and the linear operator $B$ is bounded, that is,
$B\in\mathrm{End}\,\mathcal{X}$. We construct a similarity transformation of the operator  $A-B$ into a simpler operator $A-B_0$, where the operator $B_0\in\mathrm{End}\,\mathcal{X}$ 
has a compact Beurling spectrum (see Definition \ref{baskdef2.7}). The latter property is desirable in applications as it opens up the possibility of using a much larger collection of numerical approximation methods.

It is to obtain the operator $B_0$, that we use the method of similar operators that we advocate. The standard versions of the method can be found, for example, in \cite{B86, BDS11, BKU19, BP17}. In this paper, however, we present a non-traditional modification of the method, which is applicable more generally than the standard ones and is based in part on the work in \cite{B87, B94}.

Let us emphasize that the modification of the method of similar operators used in this paper is considerably different from the standard scheme in \cite{B86,  BDS11, BKU19, BP17} and it improves  the results in \cite{B87, B94} that are closest to it. The key difference, in comparison with the standard scheme, is that one of the main transforms (operator $J$) used to construct the similarity transformation is no longer assumed to be an idempotent.
Consequently, the main equation of the method of similar operator \eqref{bask3.1} differs from the standard one \cite[Eq.~(3.3)]{B86}. Condition \eqref{bask3.2} of the main abstract similarity theorem (Theorem~\ref{baskth3.1}) also differs from its standard analog  (\cite[Theorem~3.1, Eq.~(3.4)]{B86}). We also remark that in the traditional scheme the unperturbed operator $A$ is typically assumed to have discrete spectrum or at least have separated spectral components
(\cite{B86, B87, B94, BDS11, BKU19, BP17}). The scheme developed in this paper is equipped to handle the case with no gaps in the spectrum, i.~e.~$\sigma(A)=\mathbb R$.

Moreover, in this paper, we consider unperturbed operators $A$ in a larger class than mentioned above. We assume that the operator  $A$ is a generator of a non-degenerate Banach $L_1(\mathbb{R})$-module with the structure associated with an isometric representation  $\mathcal T: \mathbb{R}\to\mathrm{End}\,\mathcal{X}$; 
the operator $B$  is assumed to belong to a certain class
$\mathcal{M}_0\subseteq\mathrm{End}\,\mathcal{X}$ which coincides with $\mathrm{End}\,\mathcal{X}$ if the representation  $\mathcal T$ is strongly continuous (see Section \ref{basksec2} for the relevant definitions).

The establishment of the similarity theorems heavily relies on the spectral theory of Banach $L_1(\mathbb{R})$-modules. The indispensable components of this framework are presented in Section \ref{basksec2} using a specific  customization for the subsequent proofs.

In the following Section \ref{HarmSp}, we introduce the notion of spaces that are harmonious to the operator $A$ (see Definition~\ref{baskdef16}) and give illustrative examples of such spaces. In particular, we define $A$-harmonious spectral submodules, submodules of periodic and almost periodic operators, Wiener classes, Beurling classes, Jaffard classes, as well as $\ABK$-classes. Some of these spaces (under additional restrictions) are then used as admissible perturbation spaces (see Definition \ref{baskdef3.3}).   

In Section \ref{basksec3}, we present an abstract scheme of the method of similar operators that takes into account that the transform  $J$ does not have to be an idempotent. Depending on the assumptions, we prove four different similarity theorems: Theorem \ref{baskth3.1} covers the most general case; Theorem \ref{baskth3.2} applies when  $J$ is an idempotent (this theorem is most commonly used in the literature, \cite{B86, B87, B94, BDS11, BKU19, BP17});
Theorem~\ref{baskth3.2c} is a modification of Theorem \ref{baskth3.2}, which includes an additional condition on the perturbation $B$: 
it is assumed that $JB=0$; finally, Theorem~\ref{baskth3.4} covers the special case of Theorem~\ref{baskth3.2c} in which $J=0$ (in this case, the main non-linear equation of the method reduces to the linear equation studied by Friedrichs 
\cite{DS88III, F65p}). We purposefully  collected the above known and new results on the method of similar operators to create a convenient resource for future reference; to the best of our knowledge such a classification of various theorems of the method of similar operators does not exist elsewhere in the literature. 

Section \ref{basksec4} is the centerpiece of the paper. Here we showcase how the abstract scheme of the method of similar operators performs in the setting of harmonious spaces (see Definition~\ref{baskdef16}). In particular, we present a specific construction of admissible triples (as proved in Theorem \ref{baskth4.1}), which allows us to establish  
 similarity of various perturbed operators in Theorem \ref{baskth4.2}. Leveraging this similarity, we proceed to prove a result about spectral invariance in Theorem \ref{baskth4.3}. Additionally, in Theorem \ref{baskth5.1}, we weaken conditions for similarity in the special case when $0$ is an isolated point in the Beurling spectrum of the perturbation operator $B$. We conclude the section with the case when operator $B$ is hypercausal  (see Definition~\ref{baskdef2.9}). In this case, $J=0$ and we initially present Theorem~\ref{baskth5.2}, which is the result of using Friedrichs' method (Theorem~\ref{baskth3.4}) in this setting. It turns out, however, that hypercausality allows one to prove a much stronger result, which requires no conditions on the norm of the perturbation (see Theorem \ref{hypcaussim}).

In Sections \ref{basksec5} and \ref{basksec6}, the theorems of
Section \ref{basksec4} are applied for specific classes of operators. In particular, in Section \ref{basksec5}, we consider first order differential operators in homogeneous function spaces (see Definition \ref{baskdef26}) with potentials from $\mathcal{M}$-compatible function spaces
 (see Definition \ref{baskdef27}). We show that such an operator is similar to a differential operator with a potential that is a restriction of an entire function of exponential type (see Theorem \ref{baskth6.1}). 
Additionally, we provide sufficient conditions for similarity of the original operator to an operator with a constant potential (Theorem \ref{baskth12}), and to the unperturbed differentiation operator itself (Theorem \ref{baskth13}). 
In Section \ref{basksec6}, we apply the main theorems to operators defined by their matrices with respect to a disjunctive resolution of the identity. We use these examples to also illustrate the connections and the differences of the scheme of the method of similar operators used in this paper and the standard scheme of    \cite{B86, B87, B94, BDS11, BKU19, BP17} that is  commonly used for operator matrices.

Let us conclude the introduction with a description of standard notions and notation used throughout the paper. 

By $\sigma(A)$ and $\rho(A)$ we shall denote, respectively,  the spectrum and the resolvent set of a linear operator $A$. By the resolvent of $A$ we shall mean the operator-valued function $R = R(\cdot\,; A)$: $\rho(A)\to \mathrm{End}\,\mathcal{X}$ given by $R(\lambda) = (A-\lambda I)^{-1}$.

By $L_p(\mathbb{R}, \mathscr{X})$, $p\in [1, \infty]$, we shall denote the standard Bochner-Lebesgue spaces. Recall that
$L_p(\mathbb{R}, \mathscr{X})$ is a Banach space of (equivalence classes of) Bochner measurable $\mathscr X$-valued functions
on $\mathbb{R}$ that are $p$-summable if $p\in[1,\infty)$ or essentially bounded if  $p = \infty$. The norms in $L_p(\mathbb{R}, \mathscr{X})$, $p\in [1, \infty]$, are given by
$$
\|x\|_{p}=\Big(\int_{\mathbb{R}}\|x(t)\|_{\mathscr{X}}^p\,dt\Big)^{1/p}, \quad p\in [1, \infty),
$$
and $\|x\|_{\infty}=\mathrm{ess}\,\sup_{t\in\mathbb{R}}\|x(t)\|_{\mathscr{X}}$.
The classical Lebesgue spaces are, of course, a special case of the Bochner-Lebesgue spaces. We will commonly use an abbreviated notation for them; in particular, for
$L_1=L_1(\mathbb{R})=L_1(\mathbb{R}, \mathbb{C})$ and $L_2=L_2(\mathbb{R})=L_2(\mathbb{R}, \mathbb{C})$. Recall that $L_1$ is a Banach algebra with the multiplication operator given by the convolution 
$$
(f*g)(t)=\int_{\mathbb{R}}f(\tau)g(t-\tau)\,d\tau, \quad f, g\in L_1(\mathbb{R}).
$$
Recall also that $L_2$ is a Hilbert space with the inner product given by
$$
\langle f, g \rangle =\int_{\mathbb{R}}f(t)\overline{g(t)}\,dt, \quad f, g\in L_2(\mathbb{R}).
$$

We will use the Fourier transform of a function $f\in L_1$ given by
$$
\widehat{f}(\lambda)=\int_{\mathbb{R}}f(t)e^{-i\lambda t}\,dt, \quad \lambda\in\mathbb{R}, \quad f\in L_1,
$$
and its standard extension to $L_2$. By $\widehat{L}_1=\widehat{L}_1(\mathbb{R})$
we shall denote the Fourier algebra of functions that is isomorphic (as a Banach algebra) to  $L_1$ via the Fourier transform. The multiplication operation in $\widehat{L}_1$ is the pointwise multiplication of functions and the norm is
$$
\|\widehat{f}\|_\infty=\max_{\lambda\in\mathbb{R}}|\widehat{f}(\lambda)|, \quad f\in L_1.
$$
For $f\in  L_2$ we have the standard Parseval equality $\|\widehat{f}\|_2=\sqrt{2\pi}\|f\|_2$.
 
\section{Banach modules}\label{basksec2}

In this section, we outline the basic notions and results of the spectral theory of Banach modules that are necessary for the subsequent exposition.
We follow \cite{B04, BK05, BK14, BKU20, L53, RS00} in our presentation.

\begin{definition}
A complex Banach space $\mathcal{X}$ is a Banach $L_1(\mathbb{R})$-module if there is a bilinear map
 $(f, x)\mapsto fx: L_1(\mathbb{R})\times\mathcal{X}\to\mathcal{X}$, with the properties:

\begin{enumerate}
    \item $(a*b)x=a(bx)$, $a$, $b\in L_1$, $x\in\mathcal{X}$;
    \item $\|ax\|\leq \|a\|_1\|x\|$, $a\in L_1$, $x\in\mathcal{X}$.
\end{enumerate}
\end{definition}

\begin{definition}\label{def2}
A Banach module is non-degenerate if   $fx=0$ for all $f\in L_1$ implies that $x=0$.
\end{definition}

The structure of a Banach  $L_1(\mathbb{R})$-module is often associated with a bounded representation of the group $\mathbb{R}$.

\begin{definition}\label{baskdef2.3}
A map $\mathcal{T}: \mathbb{R}\to\mathrm{End}\,\mathcal{X}$ is called a representation of the group $\mathbb{R}$
by operators from $\mathrm{End}\,\mathcal{X}$ if $\mathcal{T}(0)=I$ and $\mathcal{T}(t+s)=\mathcal{T}(t)\mathcal{T}(s)$, $t, s\in\mathbb{R}$. A representation  $\mathcal{T}$ is strongly continuous if every function of the form
$\tau_x: \mathbb{R}\to\mathrm{End}\,\mathcal{X}$, $\tau_x(t)=\mathcal{T}(t)x$, $x\in\mathcal{X}$, is continuous. A representation
$\mathcal{T}: \mathbb{R}\to\mathrm{End}\,\mathcal{X}$ is isometric if
$\|\mathcal{T}(t)x\|=\|x\|$ for all $t\in\mathbb{R}$ and $x\in\mathcal{X}$.
\end{definition}

Let us give a few examples of most commonly used representations.

\begin{example}\label{exrepSM}
Let $\mathcal X = L_p(\mathbb{R}, {\mathscr{X}})$, $p\in [1, \infty]$.
Then $S, M: \mathbb{R}\to\mathrm{End}\,\mathcal X$,
\begin{equation}\label{basknew1}
(S(s)x)(t)=x(s+t), \quad t, s\in\mathbb{R}, \quad x\in \mathcal X,
\end{equation}
\begin{equation}\label{basknew1'}
(M(s)x)(t)=e^{ist}x(t), \quad t, s\in\mathbb{R}, \quad x\in \mathcal X,
\end{equation}
are isometric representations of the group $\mathbb{R}$ by operators from $\mathrm{End}\, \mathcal X$. They are strongly continuous if $p\neq  \infty$.
The representation $S$ will be referred to as {translation} and $M$ -- as {modulation}.
\end{example}
We shall exhibit one more important example of a representation below (see Example \ref{exrepU}).

\begin{definition}\label{baskdef2.4}
We say that a module structure on $\mathcal{X}$ is associated with a representation 
$\mathcal{T}: \mathbb{R}\to\mathrm{End}\,\mathcal{X}$ if for any   $x\in\mathcal{X}$,
$t\in\mathbb{R}$ and $f\in L_1(\mathbb{R})$ we have
$$
\mathcal{T}(t)(fx)=(S(t)f)x=f(\mathcal{T}(t)x),
$$
where the representation  $S: \mathbb{R}\to\mathrm{End}\,L_1$ is given by \eqref{basknew1}.
\end{definition}

In order to emphasize that the module structure on $\mathcal{X}$ is associated with a representation 
$\mathcal{T}$, we may use the notation $(\mathcal{X}, \mathcal{T})$ instead of $\mathcal{X}$. We shall also 
occasionally employ a slight abuse of notation by writing $\mathcal{T}(f)x$  instead of $fx$ for $f\in L_1$ and $x\in(\mathcal{X}, \mathcal{T})$.
This is justified via identifying $\mathbb{R}$ with Dirac measures, $L_1$  -- with absolutely continuous measures (with respect to the Lebesgue measure), and viewing $\mathcal{T}$ as a representation of the algebra of measures.

\begin{definition}\label{defharm}
We call a non-degenerate Banach  $L_1(\mathbb{R})$-module $(\mathcal{X}, \mathcal{T})$  a harmonious space.
\end{definition}

\begin{remark}\label{ogriz}
Hereinafter, we consider only non-degenerate Banach  $L_1(\mathbb{R})$-modules and all representations are assumed to be isometric; these two assumptions may not be explicitly stated. We remark that if a representation is merely bounded, then the space   $\mathcal X$ admits an equivalent norm $|||\cdot|||$, 
\[
|||x||| = \sup_{t\in\mathbb R} \|\mathcal T(t)x\|, \ x\in\mathcal X,
\]
which turns the representation into an isometric one.
\end{remark}

Let $\mathcal{T}: \mathbb{R}\to\mathrm{End}\,\mathcal{X}$ be a strongly continuous representation. Then the formula
\begin{equation}\label{basknew2}
\mathcal T(f)x=fx=\int_{\mathbb{R}}f(t)\mathcal{T}(-t)x\,dt, \quad f\in L_1, \quad x\in\mathcal{X},
\end{equation}
defines a Banach   $L_1(\mathbb{R})$-module structure on $\mathcal{X}$ that is associated with the representation  $\mathcal{T}$.
We note that  if $(\mathcal{X}, \mathcal{T}_1)$ and $(\mathcal{X}, \mathcal{T}_2)$ are non-degenerate Banach modules with the same structure then $\mathcal{T}_1=\mathcal{T}_2$ (see \cite[Lemma 2.2]{BK05}).

\begin{definition}
Let $(\mathcal{X}, \mathcal{T})$ be a Banach $L_1(\mathbb{R})$-module. A linear subspace
$\mathcal{F}\subset\mathcal{X}$ is a submodule if $\mathcal T(t)x \in \mathcal F$ and $ax\in\mathcal{F}$ for all $t\in\mathbb R$, $a\in L_1$ and $x\in\mathcal{F}$.
\end{definition}
Observe that any closed submodule  $\mathcal{F}$ is itself a Banach module.

\begin{definition}
A vector $x$  in a Banach $L_1(\mathbb{R})$-module $(\mathcal{X}, \mathcal{T})$ is called $\mathcal{T}$-continuous if the function $\tau_x: \mathbb{R}\to\mathcal{X}$ from Definition \ref{baskdef2.3} is continuous.
\end{definition}

The set of all $\mathcal{T}$-continuous vectors from $\mathcal{X}$ is a closed submodule of $\mathcal{X}$, which we will denote by $\mathcal{X}_c$.
For $x\in\mathcal{X}_c$ and $f\in L_1(\mathbb{R})$, we have  that formula \eqref{basknew2} holds (see \cite{B04}).

\begin{definition}\label{baskdef2.7}
Let $(\mathcal{X}, \mathcal{T})$ be a Banach $L_1(\mathbb{R})$-module and $Y$ be a subspace of $\mathcal{X}$.
The Beurling spectrum  $\Lambda(Y)=\Lambda(Y, \mathcal{T})$ of $Y$ is defined by
$$
\Lambda(Y, \mathcal{T})=\{\lambda\in\mathbb{R}:\; \mbox{if}\; fx=0 \; \mbox{ for some }\; f\in L_1 \mbox{ and all }\; x\in Y \;
\mbox{ then }\; \widehat{f}(\lambda)=0\}.
$$
\end{definition}

If $Y$ consists of a single vector $x\in\mathcal{X}$, then we will write $\Lambda(x)$ instead of  $\Lambda(Y) = \Lambda(\{x\})$. Observe that
$$
\Lambda(x)=\{\lambda\in\mathbb{R}:\;fx\ne 0 \;\mbox{ for any }\; f\in L_1 \;\mbox{ such that }\; \widehat{f}(\lambda)\ne 0\}.
$$

The basic properties of the Beurling spectrum are summarized in the following lemma.

\begin{lemma}[\cite{BK05}]\label{basklh2.1new}
Let $(\mathcal{X}, \mathcal{T})$ be a Banach $L_1(\mathbb{R})$-module. The following properties hold.
\begin{enumerate}
    \item $\Lambda(Y)$ is closed for any $Y\subseteq\mathcal{X}$.
    \item $\Lambda(Y)=\varnothing$ if and only if $Y=\{0\}$.
    \item $\Lambda(Ax+By)\subseteq \Lambda(x)\cup\Lambda(y)$ for any  $A, B\in\mathrm{End}\,\mathcal{X}$ that commute with all $\mathcal{T}(f)$, $f\in L_1(\mathbb{R})$.
    \item $\Lambda(fx)\subseteq (\mathrm{supp}\,\widehat{f})\cap \Lambda(x)$ for any  $f\in L_1(\mathbb{R})$ and $x\in\mathcal{X}$.
    \item $fx=0$ if $(\mathrm{supp}\,\widehat{f})\cap \Lambda(x)=\emptyset$ for  $f\in L_1(\mathbb{R})$ and $x\in\mathcal{X}$.
    \item $fx=x$ if $\Lambda(x)$ is compact and $\widehat{f}=1$ in some neighborhood of  $\Lambda(x)$.
\end{enumerate}
\end{lemma}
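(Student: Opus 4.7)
The approach rests on two themes: structural properties of the Fourier algebra $\widehat{L}_1$ (injectivity of the Fourier transform, regularity, existence of bumps with prescribed support), combined with non-degeneracy, which lets one pass from ``$fx = 0$ for all $f\in L_1$'' to ``$x = 0$''. Items (1), (3), (4), (5) are direct; (2) and (6) require a little more care.

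For (1), a witness $f$ at $\lambda_0\notin \Lambda(Y)$ continues to work on a whole neighborhood of $\lambda_0$ by continuity of $\widehat{f}$. For (3), given witnesses $f_1, f_2$ at $\lambda$ against $x$ and $y$, the convolution $f_1 * f_2$ annihilates both vectors by associativity of the module action; the commutation assumption on $A$ and $B$ then makes it annihilate $Ax + By$, while $\widehat{f_1*f_2}(\lambda) = \widehat{f_1}(\lambda)\widehat{f_2}(\lambda)\ne 0$. For (4), two cases: if $\lambda\notin \Lambda(x)$, a witness $g$ for $x$ satisfies $g(fx) = f(gx) = 0$; if $\lambda\notin\mathrm{supp}\,\widehat{f}$, pick $g$ with $\widehat{g}$ supported in a small neighborhood of $\lambda$ disjoint from $\mathrm{supp}\,\widehat{f}$, so that $g*f = 0$ by Fourier injectivity and hence $g(fx) = (g*f)x = 0$. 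Item (5) follows by combining (4) with (2): the inclusion in (4) yields $\Lambda(fx) = \varnothing$, and (2) then forces $fx = 0$.

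The main obstacle is the converse in (2): $\Lambda(Y) = \varnothing \Rightarrow Y = \{0\}$. The easy direction is that if $Y = \{0\}$ the defining hypothesis is vacuous, so membership in $\Lambda(Y)$ would force $\widehat{f}(\lambda) = 0$ for every $f\in L_1$, which is impossible since $\widehat{L}_1$ separates points. For the converse, fix $x\in Y$; since $\Lambda(x)\subseteq \Lambda(Y) = \varnothing$, for each $\lambda$ there exists $f_\lambda\in L_1$ with $f_\lambda Y = 0$ and $\widehat{f_\lambda}(\lambda)\ne 0$. For any $f\in L_1$ with $\widehat{f}$ compactly supported, compactness and regularity of $\widehat{L}_1$ yield a finite partition-of-unity decomposition $\widehat{f} = \sum_i \widehat{g_i}\widehat{f_{\lambda_i}}$ in the Fourier algebra, i.e.\ $f = \sum_i g_i * f_{\lambda_i}$, so $fx = \sum_i g_i(f_{\lambda_i}x) = 0$. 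Density of such $f$ in $L_1$ plus the module estimate $\|fx\|\le\|f\|_1\|x\|$ extends this to all $f\in L_1$, and non-degeneracy gives $x = 0$. This Wiener-type localization is the only place where a nontrivial harmonic-analysis input enters, and for the technical details one may cite \cite{BK05}.

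For (6), apply (3) with $A = I$, $B = -I$ (both trivially commute with every $\mathcal{T}(g)$) and then (4) to obtain $\Lambda(x - fx) \subseteq \Lambda(x)$. For any $\lambda\in \Lambda(x)$, pick $h\in L_1$ with $\widehat{h}$ supported in the neighborhood of $\Lambda(x)$ on which $\widehat{f} \equiv 1$ and with $\widehat{h}(\lambda)\ne 0$; then $\widehat{h} = \widehat{h}\,\widehat{f}$ pointwise, so $h = h*f$ by Fourier injectivity, giving $h(x - fx) = hx - (h*f)x = 0$. Hence $\lambda\notin \Lambda(x - fx)$, so $\Lambda(x-fx) = \varnothing$, and (2) yields $fx = x$.
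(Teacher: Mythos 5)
The paper does not prove this lemma; it is stated with a citation to \cite{BK05}, so there is no in-paper argument to compare against. Your proof is correct and is the standard one: items (1) and (3)--(6) follow by elementary manipulations with witness functions in the Fourier algebra exactly as you describe (with (5) and (6) reduced to (2) via (3) and (4)), and the only substantive harmonic-analysis input is the Wiener--Tauberian localization behind the implication $\Lambda(x)=\varnothing\Rightarrow x=0$ in (2), which you correctly isolate as the one nontrivial step and for which deferring to \cite{BK05} (i.e., to regularity and local invertibility in $\widehat{L}_1$) is appropriate.
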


We remark that the last two properties of the above lemma may be strengthened for the case of an isometric representation $\mathcal{T}$ (as stated in the lemma, they are valid even for a large class of unbounded representations).

\begin{lemma}\label{basklh2.1'new}
\cite[Lemma 3.7.32]{B04}.
Let $(\mathcal{X}, \mathcal{T})$ be a Banach $L_1(\mathbb{R})$-module and the representation   $ \mathcal{T}$ be isometric. Then for $f\in L_1(\mathbb{R})$ and $x\in\mathcal{X}$ the following properties hold.
\begin{enumerate}
    \item[$5^\prime$.]  $fx=0$ if $(\mathrm{supp}\,\widehat{f})\cap \Lambda(x)$ is countable and $\widehat{f}(\lambda)= 0$ for all $\lambda\in (\mathrm{supp}\,\widehat{f})\cap \Lambda(x)$.
    \item[$6^\prime$.] $fx=x$ if $\Lambda(x)$ is a compact set with a countable boundary and $\widehat{f}=1$ on $\Lambda(x)$.
\end{enumerate}
\end{lemma}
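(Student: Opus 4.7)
My plan is to derive $6'$ from $5'$ together with property~6 of Lemma~\ref{basklh2.1new}, and then to prove $5'$ via a spectral synthesis argument applied to the countable closed set $E := (\operatorname{supp}\widehat f) \cap \Lambda(x)$.

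For the reduction of $6'$ to $5'$: compactness of $\Lambda(x)$ lets me pick $h \in L_1$ with $\widehat h \equiv 1$ on an open neighborhood of $\Lambda(x)$, so that $hx = x$ by property~6. Setting $g := f - h$, on the open set $\operatorname{int}\Lambda(x)$ both $\widehat f$ and $\widehat h$ equal $1$, whence $\widehat g$ vanishes there; this forces $(\operatorname{supp}\widehat g) \cap \Lambda(x) \subseteq \partial\Lambda(x)$, a countable set, while $\widehat g = 0$ holds throughout $\Lambda(x)$. Property $5'$ applied to $g$ then yields $gx = 0$, hence $fx = x$.

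For $5'$ itself: observe that $\widehat f$ vanishes on $E$ and also on the open set $V := \mathbb R \setminus \operatorname{supp}\widehat f$, which contains $\Lambda(x) \setminus E$. I will build a sequence $f_n \in L_1$ with $f_n \to f$ in $L_1$ and $(\operatorname{supp}\widehat{f_n}) \cap \Lambda(x) = \varnothing$. Property~5 of Lemma~\ref{basklh2.1new} then gives $f_n x = 0$, and the Banach-module estimate $\|(f - f_n)x\| \le \|f - f_n\|_1\,\|x\|$ lets me pass to the limit to obtain $fx = 0$. To construct $f_n$, I take smooth cut-offs $\psi_n \in \widehat L_1$ equal to $1$ on shrinking open neighborhoods of $E$ and supported slightly wider, and set $\widehat{f_n} := \widehat f \cdot (1 - \psi_n)$. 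At each $\lambda \in E$, $\widehat{f_n}$ vanishes where $\psi_n \equiv 1$; at each $\lambda \in \Lambda(x) \setminus E \subset V$, $\widehat f$ already vanishes on the open neighborhood $V$. Hence $\widehat{f_n}$ vanishes in an open neighborhood of every point of $\Lambda(x)$, as required.

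The heart of the argument, and the main obstacle, is the convergence $\widehat f \cdot \psi_n \to 0$ in $\widehat L_1$, which is precisely the statement that the countable closed set $E$ is a set of \emph{spectral synthesis} for $L_1(\mathbb R)$. This is a classical result in the Beurling--Pollard--Herz--Kaplansky circle, and its standard proof proceeds by transfinite induction on the Cantor--Bendixson rank of $E$: the base case (synthesis at a single point) is handled by direct dyadic smooth cut-offs exploiting the continuity of $\widehat f$ there, while the inductive step combines synthesis on the successive derived sets with the stability of synthesis under closed unions. The isometric hypothesis on $\mathcal T$ is what underpins the functional-calculus framework that makes this synthesis argument applicable within the module $(\mathcal X, \mathcal T)$.
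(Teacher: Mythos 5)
The paper itself offers no proof of this lemma --- it is quoted directly from \cite[Lemma 3.7.32]{B04} --- so there is nothing in-text to compare your argument against; I can only assess it on its own merits. On those merits the outline is essentially right. The reduction of $6'$ to $5'$ via $g=f-h$ with $\widehat h\equiv 1$ near the compact set $\Lambda(x)$ is correct: $\widehat g$ vanishes on the open set $\operatorname{int}\Lambda(x)$, so $(\supp\widehat g)\cap\Lambda(x)\subseteq\partial\Lambda(x)$ is countable while $\widehat g=0$ on all of $\Lambda(x)$, and $5'$ applies. For $5'$ you correctly strip the module away: everything reduces, via property 5 of Lemma \ref{basklh2.1new} and the contractive estimate $\|(f-f_n)x\|\le\|f-f_n\|_1\|x\|$, to the purely $L_1(\mathbb R)$ statement that $f$ can be approximated by functions whose Fourier transforms vanish near the countable closed set $E=(\supp\widehat f)\cap\Lambda(x)$ without enlarging $\supp\widehat f$ --- i.e.\ to the classical Wiener--Ditkin property of countable closed sets, whose proof by transfinite induction on the Cantor--Bendixson rank you describe accurately.

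Two caveats. First, your cut-offs $\psi_n$ cannot be taken equal to $1$ on a neighborhood of $E$ when $E$ is unbounded (e.g.\ $E=\mathbb Z$), since $\psi_n\in\widehat L_1\subset C_0(\mathbb R)$ must vanish at infinity. This is fixable in a standard way: either first replace $f$ by $f*\varphi_a$ using the b.a.i.\ \eqref{otrap} with compactly supported Fourier transforms (this makes $E$ compact and costs nothing, since $f*\varphi_a\to f$ in $L_1$), or phrase the Ditkin approximation as $f*u_n\to f$ with $\widehat{u_n}$ vanishing near $E$, in which case $\supp\widehat{f*u_n}\subseteq\supp\widehat f$ still takes care of $\Lambda(x)\setminus E$. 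Second, your closing sentence about isometry is not supported by your own argument: nowhere do you use more than the contractive module action, which is built into the paper's definition of a Banach $L_1(\mathbb R)$-module. The isometry hypothesis is relevant to the more general (possibly unbounded or weighted) representations considered in \cite{B04}, where it ensures the action factors through the unweighted algebra $L_1(\mathbb R)$, the setting in which the synthesis machinery you invoke is valid; within the present paper's framework it plays no role in this particular proof.
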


The Beurling spectrum should be thought of as a generalization of the notion of the support as we illustrate in the following example. 

\begin{example}\label{exbsp}
For the representation $S$  on $\mathcal{X}=L_2(\mathbb{R, C})$ from Example \ref{exrepSM} the spectrum  
 $\Lambda(x, S)$ coincides with the support $\supp \widehat{x}$ of the Fourier transform of the function  $x\in\mathcal{X}$.
For the representation  $M$ of the same example   $\Lambda(x, M)= \mathrm{supp}\,x$ is the support of the function  $x\in\mathcal{X}$ itself.
\end{example}

We shall make use of the notions of a bounded approximate identity (b.a.i.) of the algebra $L_1(\mathbb R)$ and a spectral submodule.

\begin{definition}[\cite{B04, BK05}]\label{oae}
A bounded net  $(e_\alpha)_{\alpha\in\Omega}$ of functions from $L_1(\mathbb R)$ is a bounded approximate identity (b.a.i.) of the algebra $L_1(\mathbb R)$, if
\begin{enumerate}
\item $\widehat e_\alpha(0) = 1$ for all $\alpha\in \Omega$;
\item  $\lim_\alpha  e_\alpha * f = f$ for any $f\in L_1(\mathbb R)$.
\end{enumerate}
\end{definition}

\begin{definition}\label{specmod}
For $\sigma\subset\mathbb R$, a spectral submodule $\mathcal{X}(\sigma)$ consists of all $x\in\mathcal{X}$ for which $\Lambda(x)\subseteq\sigma$.
\end{definition}

Observe that a spectral submodule $\mathcal{X}(\sigma)$ is closed provided that the set $\sigma$ is closed. Consider also the two submodules
$$
\mathcal{X}_{comp}=\{x\in\mathcal{X}:\ \Lambda(x)\;\mbox{is compact}\},
$$
and
$$
\mathcal{X}_\Phi=\{fx:\;f\in L_1, x\in\mathcal{X}\},
$$
which prominently feature in the following extremely useful result.

\begin{theorem}[\cite{BK05, BKU20}]\label{cht} 
We have
$$
\mathcal{X}_c=\mathcal{X}_\Phi=
\overline{\mathcal{X}_{comp}} = \{x\in \mathcal{X}: \lim e_\alpha x = x \mbox{ for any b.a.i. } (e_\alpha) \mbox{ from } L_1(\mathbb R)\}.
$$
\end{theorem}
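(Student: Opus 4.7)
The plan is to prove the chain $\mathcal{X}_\Phi \subseteq \mathcal{X}_1 \subseteq \mathcal{X}_c \subseteq \overline{\mathcal{X}_\Phi}$, where $\mathcal{X}_1$ denotes the fourth set appearing in the statement, and then to use Cohen's factorization theorem to conclude that $\mathcal{X}_\Phi$ is already norm-closed, collapsing the chain to a single set. The inclusion $\mathcal{X}_\Phi \subseteq \mathcal{X}_1$ is immediate: for $x = fy$ and any b.a.i.\ $(e_\alpha)$, the associativity axiom $(a*b)z = a(bz)$ gives $e_\alpha x = (e_\alpha * f) y$, and boundedness of the action together with $\|e_\alpha*f-f\|_1\to 0$ yields $e_\alpha x \to x$. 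Moreover, each $fy$ lies in $\mathcal{X}_c$ because $\mathcal{T}(t)(fy) = (S(t)f)y$ depends continuously on $t$ by strong continuity of translation on $L_1$; hence $\mathcal{X}_\Phi \subseteq \mathcal{X}_c$, and since $\mathcal{X}_c$ is closed, $\mathcal{X}_1 \subseteq \mathcal{X}_c$ as well.

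For $\mathcal{X}_c \subseteq \overline{\mathcal{X}_\Phi}$, I would invoke the integral representation $\mathcal{T}(g)x = \int g(t)\mathcal{T}(-t)x\,dt$ from \eqref{basknew2}, valid for $x\in\mathcal{X}_c$. Taking any Dirac-concentrating family $g_n\in L_1$ (say $g_n = n\mathbf{1}_{[-1/(2n),1/(2n)]}$) with $\widehat{g_n}(0) = \int g_n = 1$ and $\supp g_n \to \{0\}$, the bound
\[
\|g_n x - x\| = \Big\|\int g_n(t)[\mathcal{T}(-t)x - x]\,dt\Big\| \leq \sup_{t\in\supp g_n}\|\mathcal{T}(-t)x - x\| \to 0
\]
follows from continuity of $t\mapsto \mathcal{T}(-t)x$ at the origin. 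Hence $x\in\overline{\mathcal{X}_\Phi}$. Cohen's factorization theorem, applied to $L_1(\mathbb{R})$ (which has a b.a.i.) acting on $\mathcal{X}$, then implies that the set $\mathcal{X}_\Phi = \{fy : f\in L_1, y\in\mathcal{X}\}$ is already norm-closed, so $x\in \mathcal{X}_\Phi$. Combined with the first paragraph, this yields $\mathcal{X}_\Phi = \mathcal{X}_1 = \mathcal{X}_c$.

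It remains to show $\mathcal{X}_\Phi = \overline{\mathcal{X}_{comp}}$. For $\mathcal{X}_{comp}\subseteq \mathcal{X}_\Phi$: given $x$ with $\Lambda(x)$ compact, pick $f\in L_1$ with $\widehat f \in C_c^\infty(\mathbb{R})$ and $\widehat f\equiv 1$ on a neighborhood of $\Lambda(x)$; Lemma~\ref{basklh2.1new}(6) yields $fx = x$, so $x\in\mathcal{X}_\Phi$, and closedness of $\mathcal{X}_\Phi$ gives $\overline{\mathcal{X}_{comp}}\subseteq\mathcal{X}_\Phi$. Conversely, given $x = fy\in\mathcal{X}_\Phi$, approximate $f$ in $L_1$-norm by functions $f_n$ with $\widehat{f_n}$ compactly supported (for instance $f_n = h_n * f$ for a Fej\'er-type $h_n$ with $\widehat{h_n}$ compactly supported); then $f_n y\to x$ in $\mathcal X$ and Lemma~\ref{basklh2.1new}(4) gives $\Lambda(f_n y)\subseteq\supp\widehat{f_n}$, which is compact, so $f_n y\in\mathcal{X}_{comp}$.

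The only genuinely nontrivial ingredient is the invocation of Cohen's factorization theorem, which is what upgrades the natural statement $\mathcal{X}_c \subseteq \overline{\mathcal{X}_\Phi}$ to $\mathcal{X}_c \subseteq \mathcal{X}_\Phi$; without it one could only prove $\mathcal{X}_c = \overline{\mathcal{X}_\Phi}$, and it would not be \emph{a priori} clear that every $\mathcal{T}$-continuous vector factors as a single product $fy$. All other steps reduce to the integral representation on $\mathcal{X}_c$, strong continuity of translation on $L_1$, density of compactly Fourier-supported functions in $L_1$, and the elementary spectral properties collected in Lemma~\ref{basklh2.1new}.
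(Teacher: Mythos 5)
Your proposal is correct, and it follows essentially the route the paper itself indicates: the paper gives no proof of Theorem~\ref{cht} but attributes it to the Cohen--Hewitt factorization theorem, which is exactly the one nontrivial ingredient you isolate, with all remaining inclusions handled (as you do) by the integral representation \eqref{basknew2} on $\mathcal{X}_c$, continuity of translation on $L_1$, density of compactly Fourier-supported functions, and Lemma~\ref{basklh2.1new}. The only point worth flagging is that the validity of \eqref{basknew2} for all $x\in\mathcal{X}_c$ (not just for strongly continuous $\mathcal{T}$) is itself a cited fact from \cite{B04} rather than something you establish, but since the paper states it before the theorem, relying on it is legitimate.
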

The above result is essentially the Cohen-Hewitt Factorization Theorem \cite{C59, H64}. Stated in the above form, it allows one to easily prove various density theorems in approximation theory as well as more difficult approximation results.

Another interesting class of submodules is formed by almost periodic vectors.

\begin{definition}[\cite{BK10}]\label{ap}
A vector $x$ from a Banach $L_1$-module $(\mathcal{X,T})$ is called almost periodic  (a.p.),
if the function $\tau_x: \mathbb{R}\to\mathcal{X}$ from Definition \ref{baskdef2.3} is continuous and almost periodic; in other words for any $\varepsilon >0$, the set $\Omega(\varepsilon) = \{t \in\mathbb R: \|\mathcal T(t)x-x \|<\varepsilon\}$ is relatively dense in $\mathbb R$, i.~e., there exists a compact set $K=K_\varepsilon\subset\mathbb R$ such that $(t+K)\cap\Omega(\varepsilon)\neq\emptyset$ for any $t\in\mathbb R$.
\end{definition}

\begin{definition}
A non-zero vector $x\in\mathcal{X}$ is called an eigenvector of the module  $\mathcal X$ corresponding to an eigenvalue $\lambda\in\mathbb R$, if $\mathcal T(t)x = e^{i\lambda t}x$, $t\in\mathbb R$.
\end{definition}

Clearly, for an eigenvector $x\in\mathcal{X}$, one has $fx=\widehat f(\lambda)x$ for any $f\in L_1$. It is known (see, e.~g., \cite{B04}) that the set of all a.p.~vectors coincides with the smallest closed submodule of $\mathcal X$ that contains all of its eigenvectors. We shall denote this submodule by $\mathcal{AP\, X}$. Additional information about a.p.~vectors can be found, for example, in \cite{BK10, B04}. We remark, in particular, that if the Beurling spectrum $\Lambda(x)$ has no limit points and $x\in\mathcal X_c$ then 
$x\in\mathcal{AP\, X}$.

\begin{definition}[\cite{BK10, B04}]\label{app}
A vector $x$ from a Banach $L_1$-module $(\mathcal{X,T})$ is called periodic if $x\in\mathcal X_c$ and there exists $\omega > 0$ such that $\mathcal T(\omega)x = x$, i.e., the function $\tau_x: \mathbb{R}\to\mathcal{X}$ from Definition \ref{baskdef2.3} is continuous and periodic. The set of all periodic vectors from $\mathcal X$ will be denoted by  $\mathcal{P(X)}$, and the set of all periodic vectors of period $\omega$ -- by $\mathcal{P_\omega(X)}$.
\end{definition}

Clearly, for any $\omega > 0$, the set $\mathcal{P_\omega(X)}$ is a closed submodule of $\mathcal{AP\, X}$. Observe also that for $x\in\mathcal{P_\omega(X)}$ one has $\Lambda(x) \subseteq \left\{\frac{2\pi n}{\omega}: n\in\mathbb Z\right\}$.

The following definition includes more exotic classes of Banach modules that use (almost) periodicity in their definition.

\begin{definition}\label{inap}
Let $(\mathcal{X,T})$ be a Banach module and $\mathcal F$ be a closed submodule of $\mathcal{X}$. The quotient space $\mathcal{X/F}$ is naturally equipped with a module structure associated with the quotient representation  $[\mathcal T/\mathcal F]:\mathbb R\to \mathcal{X/F}$,
\[
[\mathcal {T/F}](t)(x+\mathcal F) = \mathcal T(t)x+\mathcal F, \quad t\in\mathbb R,\ x\in\mathcal X.
\]
A vector $x\in\mathcal{X}$ is called (almost) periodic with respect to a submodule $\mathcal F$ (or $\mathcal F$-(almost-)periodic), if the vector $x+\mathcal F\in (\mathcal{X/F}, \mathcal{T/F})$ is (almost) periodic.
\end{definition}

The space of $\mathcal F$-almost-periodic vectors from $\mathcal X$ will be denoted by
$\mathcal{AP^F X}$ and the space of  $\mathcal F$-periodic vectors of period  $\omega$ -- by $\mathcal{P^F_\omega(X)}$.
We note that periodic and almost periodic at infinity functions that were considered, e.~g., in \cite{BSS19, BST18, B20} are examples of $\mathcal X_c$-(almost-)periodic functions in the module
$(\mathcal{X, T})$, where $\mathcal{X} = L_\infty(\mathbb{R}, \mathscr{X})$ and $\mathcal T = M$ as in Example \ref{exrepSM}.

In the following definition we introduce one of the key notions of the spectral theory of Banach $L_1(\mathbb R)$-modules.

\begin{definition}\label{baskdef2.8}
A closed linear operator $A:D(A)\subset\mathcal{X}\to\mathcal{X}$ is the generator of a Banach $L_1$-module $(\mathcal{X, T})$
if its resolvent $R: \rho(A)\to\mathrm{End}\,\mathcal{X}$ satisfies
$R(z)=\mathcal T(f_z)$ for all $z\in\mathbb{C}\setminus\mathbb{R}$,
where the functions $f_z\in L_1$ are defined via the Fourier transform: $\widehat{f}_z(\lambda)=(\lambda-z)^{-1}$, $\lambda\in\mathbb{R}$.
\end{definition}

A simple computation shows that
\begin{equation}\label{fzf}
f_z(t) = -ie^{izt}\chi_{(-\infty,0]}(t),\ z\in \mathbb {C},\ \Im z < 0.
\end{equation}
Therefore, for any $x\in\mathcal X_c$, one has
\begin{equation}\label{resf}
R(z)x = (A-zI)^{-1}x = -i\int_{-\infty}^0 e^{izt}\mathcal T(-t)x dt,\ z\in \mathbb {C},\ \Im z < 0.
\end{equation}
We also note that we always have $D(A) \subseteq \mathcal{X}_c$ due to Theorem \ref{cht}.

\begin{example}\label{exrepU}
Consider $\mathcal{X}=L_2[0, 2\pi]$, i.e., the subspace of $L_2$-functions with the support in $[0, 2\pi]$. Let $A=-d^2/dt^2: D(A)\subset\mathcal{X}\to\mathcal{X}$,
where $D(A)$ is the subset of the Sobolev space $W_2^2[0, 2\pi]$, defined by the boundary conditions $x(0)=x(2\pi)$, $x\in W_2^2[0, 2\pi]$. The eigenvalues of the operator $A$ are given by $\lambda_n=n^2$,
$n\in\mathbb{Z}$. When $n\ne 0$ the eigenvalues are semisimple and have multiplicity $2$, and the eigenvalue $\lambda_0 = 0$ is simple. The corresponding eigenvectors $e_n(t)=e^{int}/\sqrt{2\pi}$, $n\in\mathbb{Z}$, from an orthonormal basis of the Hilbert space $L_2[0, 2\pi]$. The eigenspaces $E_n$, $n\in \mathbb{Z}_+$,
corresponding to the eigenvalues $\lambda_n$, $n\in \mathbb{Z}_+$, are images of the orthogonal projections
$P_n$, $n\in \mathbb{Z}_+$, where $P_nx=\langle x, e_n\rangle e_n+\langle x, e_{-n}\rangle e_{-n}$, $n\in\mathbb{N}$, and $P_0x=\langle x, e_0\rangle e_0$. The disjunctive family of projections $\mathscr P = \{P_n, n\in \mathbb{Z}_+\}$ forms a resolution of the identity in $L_2[0, 2\pi]$ and the corresponding eigenspaces $E_n=\mathrm{Im}\,P_n$, $n\in \mathbb{Z}_+$ form an orthogonal basis of subsapces \cite{GK69} in $L_2[0, 2\pi]$.

The operator $iA$ generates a strongly continuous operator group, and the operator $A$ is the generator of the Banach $L_1(\mathbb{R})$-module with the structure that is associated with the representation $T_A: \mathbb{R}\to\mathrm{End}\,\mathcal{X}$,
\begin{equation}\label{bask2.3}
T_A(t)x=e^{itA}x=\sum_{n\in\mathbb{Z}_+}e^{i\lambda_nt}P_nx,\ t\in\mathbb R, \ x\in\mathcal{X}.
\end{equation}
Alternatively, one may introduce the representation  $T_{\mathscr P}: \mathbb{R}\to\mathrm{End}\,\mathcal{X}$, where
\begin{equation}\label{bask2.3P}
T_{\mathscr P}(t)x=\sum_{n\in\mathbb{Z}_+}e^{int}P_nx,\ t\in\mathbb R, \  x\in\mathcal{X}.
\end{equation}
One then has
$$
\Lambda(x, T_A)=\{\lambda_n\in\sigma(A): P_nx\ne 0\} \mbox{ and } \Lambda(x, T_{\mathscr P})=\{n\in\mathbb{Z}_+: P_nx\ne 0\}.
$$
A similar construction is valid for any operator with a discrete spectrum and a system of spectral projections that form a resolution of the identity
(see, e.~g., \cite{B08mz, BDS11, DM10}).
\end{example}

Under the universal assumptions of this paper, all modules have a unique well-defined generator. Moreover, if the representation $\mathcal{T}: \mathbb{R}\to\mathrm{End}\,\mathcal{X}$ is strongly continuous, then the operator $iA$ is the (infinitesimal) generator of the $C_0$-group $\mathcal{T}$ (see \cite{BKU20, EN00}). 
Hereinafter, $A: D(A)\subset\mathcal{X}\to\mathcal{X}$ will be the generator of a 
$L_1(\mathbb{R})$-module $(\mathcal{X, T})$.

The following important property of the resolvent of a module generator is very useful for us.

\begin{lemma}\label{resgen}
Let $A$ be the generator of a Banach module $(\mathcal {X, T})$, where the representation  $\mathcal T$ is bounded. Then
\[
\lim_{\varepsilon\to \pm\infty} \|(A-(\alpha+i\varepsilon)I)^{-1}\| = 0, \ \alpha\in\mathbb R.
\]
\end{lemma}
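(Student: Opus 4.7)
My plan is to reduce the norm estimate on the resolvent to a direct $L_1$-norm computation via the representation of $R(z)$ as a module action. By Definition \ref{baskdef2.8}, $R(z) = \mathcal{T}(f_z)$ with $\widehat{f}_z(\lambda) = (\lambda-z)^{-1}$, and property (2) from the definition of a Banach $L_1(\mathbb{R})$-module gives immediately that $\|R(z)\| \leq \|f_z\|_1$. Hence it suffices to estimate $\|f_z\|_1$ and show that this quantity vanishes as $|\Im z| \to \infty$.

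First, I will use the explicit formula \eqref{fzf} in the case $\Im z < 0$, and verify the symmetric formula $f_z(t) = ie^{izt}\chi_{[0,\infty)}(t)$ for $\Im z > 0$ by the same Fourier inversion argument (or by appealing to the identity $f_{\bar{z}}(t) = \overline{f_z(-t)}$). A one-line computation of $\int_{\mathbb{R}}|f_z(t)|\,dt$ then yields $\|f_z\|_1 = 1/|\Im z|$ in both cases, depending only on $\Im z$ and not on $\Re z$. Substituting $z = \alpha + i\varepsilon$ and combining the above, I obtain
\[
\|(A-(\alpha+i\varepsilon)I)^{-1}\| \;\leq\; \frac{1}{|\varepsilon|} \;\to\; 0, \qquad \varepsilon \to \pm\infty.
\]

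There is essentially no genuine obstacle here: the whole argument reduces to Definition \ref{baskdef2.8} combined with an elementary integral. The only care needed is to record the explicit formula for $f_z$ in the upper half-plane, which is entirely symmetric to \eqref{fzf}. I note that the resulting estimate is actually uniform in $\alpha \in \mathbb{R}$, which is a slightly stronger statement than the one requested.
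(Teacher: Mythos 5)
Your proposal is correct and follows essentially the same route as the paper: both represent the resolvent as $R(z)=\mathcal T(f_z)$ and note that $\|R(z)\|\le\|f_z\|_1\to 0$ as $|\Im z|\to\infty$ via the explicit formula \eqref{fzf}. You merely spell out the elementary computation $\|f_z\|_1=1/|\Im z|$ and the symmetric formula in the upper half-plane, which the paper leaves implicit.
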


\begin{proof}
The desired property is an immediate consequence of
\[
(A-(\alpha+i\varepsilon)I)^{-1} = \mathcal T(f_z),\ z = \alpha+i\varepsilon,
\]
where the functions $f_z\in L_1$ from Definition  \ref{baskdef2.8} satisfy
\[
\lim_{\Im z\to-\infty} \|f_z\| = 0,
\]
due to \eqref{fzf}.
\end{proof}

The following lemma provides an equivalent definition for the generator of a module. The statement uses the family of functions $\chi_t \in L_1$ given by $\chi_t = i\chi_{[-t,0]}$, $t > 0$.

\begin{lemma}\label{equigen}
An operator $A$ is the generator of a Banach module $(\mathcal {X, T})$ if and only of its domain consists of all $x\in \mathcal X$ for which there exists
 $y \in\mathcal X$ satisfying $\mathcal T(t)x - x =  \mathcal T(\chi_t)y$ { for all } $t> 0$, in which case $y = Ax$.
\end{lemma}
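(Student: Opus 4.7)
The plan is to shuttle between the resolvent representation $R(z) = \mathcal T(f_z)$ and the $L_1$-action of $\mathcal T$ via the auxiliary $L_1$-identity
\[
S(t)f_z - f_z = \chi_t + z\,(\chi_t * f_z),
\]
valid for every $t>0$ and every $z$ with $\Im z<0$. I would check it by observing that both sides have Fourier transform $(e^{i\lambda t}-1)/(\lambda - z)$. Combined with the module axioms $\mathcal T(t)\mathcal T(f) = \mathcal T(S(t)f)$ from Definition \ref{baskdef2.4} and $\mathcal T(f)\mathcal T(g) = \mathcal T(f*g)$, this identity does essentially all of the algebraic work for both directions.

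For necessity, fix $x \in D(A)$, set $y = Ax$, and pick any $z$ with $\Im z<0$. Since $D(A) = \mathrm{Im}\,R(z)$, one has $x = \mathcal T(f_z)(y-zx)$, whence
\[
\mathcal T(t)x - x = \mathcal T\bigl(S(t)f_z - f_z\bigr)(y-zx) = \mathcal T(\chi_t)(y-zx) + z\,\mathcal T(\chi_t)\mathcal T(f_z)(y-zx).
\]
Substituting $\mathcal T(f_z)(y-zx) = x$ on the right, the two $z\mathcal T(\chi_t)x$ terms cancel and $\mathcal T(\chi_t)y$ is all that survives.

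For sufficiency, assume the stated identity holds for some $y$ and all $t>0$, fix $z$ with $\Im z<0$, and set $\tilde x := \mathcal T(f_z)(y-zx) \in D(A)$. Then $A\tilde x = y + z(\tilde x - x)$, so applying the already-proved necessity to $\tilde x$ and subtracting the hypothesis gives
\[
\mathcal T(t)w - w = z\,\mathcal T(\chi_t)w, \qquad w := x - \tilde x,\ t>0.
\]
To conclude $w=0$, I would apply $\mathcal T(g)$ for an arbitrary $g \in L_1$ and set $w_g := \mathcal T(g)w$. The same equation holds for $w_g$ since convolutions commute, and crucially $w_g \in \mathcal X_c$ by Theorem \ref{cht}; formula \eqref{basknew2} then turns the equation into the scalar Volterra problem $\mathcal T(t)w_g = w_g + iz\int_0^t \mathcal T(s)w_g\,ds$, whose unique continuous solution is $\mathcal T(t)w_g = e^{izt}w_g$. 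Since $\mathcal T$ is isometric while $|e^{izt}|$ grows exponentially as $t \to +\infty$, this forces $w_g = 0$; non-degeneracy then gives $w = 0$, i.e.\ $x = \tilde x \in D(A)$ and $Ax = y$. The main obstacle is precisely this last step: without the passage $w \mapsto w_g$ furnished by Theorem \ref{cht}, one cannot run a pointwise Volterra argument on $\mathcal T(\cdot)w$ (which need not be continuous), and the isometry-versus-exponential-growth contradiction becomes unavailable.
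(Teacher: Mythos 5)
Your proof is correct. The necessity direction is essentially the paper's own argument: both rest on the $L_1$-identity $S(t)f_z-f_z=\chi_t+z\,(\chi_t*f_z)$ checked on the Fourier side (your sign is the right one; the paper's displayed $g$ has a sign slip in front of $z(f_z*\chi_t)$, but its Fourier computation agrees with yours), combined with $x=\mathcal T(f_z)(y-zx)$.

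Where you genuinely diverge is in the sufficiency direction. The paper sets $u=y-zx$ and verifies $\mathcal T(f)(\mathcal T(f_z)u-x)=0$ head-on: it expands $\mathcal T(f*f_z)x$ via \eqref{basknew2} (legitimate because $\mathcal T(f)x\in\mathcal X_c$), inserts the hypothesis $\mathcal T(-s)x-x=\mathcal T(\chi_{-s})y$ under the integral, and collapses the resulting double integral by Fubini until everything cancels against $\mathcal T(f*f_z)y$. You instead form the candidate $\tilde x=\mathcal T(f_z)(y-zx)\in D(A)$, feed it back through the already-proved necessity, and subtract to obtain the homogeneous relation $\mathcal T(t)w-w=z\,\mathcal T(\chi_t)w$ for $w=x-\tilde x$; you then kill $w$ by smoothing with $\mathcal T(g)$ so that Theorem \ref{cht} places $w_g$ in $\mathcal X_c$ and \eqref{basknew2} converts the relation into a Volterra equation with unique continuous solution $\mathcal T(t)w_g=e^{izt}w_g$, which contradicts isometry (boundedness would suffice) of $\mathcal T$ unless $w_g=0$. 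Both routes end with non-degeneracy, and both must pass to $\mathcal X_c$ before any integral representation can be used, so neither is more elementary in its prerequisites. Your version trades the paper's explicit integral bookkeeping for a uniqueness argument: it is shorter to verify, makes transparent exactly where boundedness of the representation enters, and as a bonus absorbs the paper's separate at-most-one-$y$ observation (since you conclude $Ax=y$ directly); the cost is the extra appeal to the forward implication and to the growth of $|e^{izt}|$ for $\Im z<0$.
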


\begin{proof}
We observe that, given $x\in \mathcal X$, there is at most one
 $y \in\mathcal X$ satisfying $\mathcal T(t)x - x =  \mathcal T(\chi_t)y$  for all  $t> 0$. Indeed, from non-degeneracy of the module $(\mathcal X,\mathcal T)$, we see that if $\mathcal T(\chi_t)v = 0$  for all  $t> 0$ then $\Lambda(v) = \emptyset$ implying $v = 0$. Thus, the second part of the statement of the lemma gives a well-defined linear operator. We need to prove that it coincides with the generator of the module. 

Assume that $A$ is the generator of a Banach module $(\mathcal {X, T})$ and
$x = \mathcal T(f_z)u = (A-zI)^{-1}u$
for some $z\in \mathbb {C\setminus R}$ and $u\in \mathcal X$. Then, for $y=Ax= u+zx$, we get
\[
\mathcal T(t)x - x -  \mathcal T(\chi_t)y = \mathcal T(S(t)f_z-f_z-\chi_t+z(f_z*\chi_t))u  = 0,
\]
due to that fact that, for $g = S(t)f_z-f_z-\chi_t+z(f_z*\chi_t)\in L_1$, one has
\[
\widehat g(\lambda) = \frac{e^{i\lambda t} -1 -\lambda \widehat{\chi}_t(\lambda)}{\lambda - z} = 0,\quad \lambda\in\mathbb R,\ t > 0, \ z\in \mathbb {C\setminus R}.
\]

Conversely, assume that for $x\in \mathcal X$ there exists $y \in\mathcal X$,  such that $\mathcal T(t)x - x =  \mathcal T(\chi_t)y$ { for all } $t> 0$. Let $u = y - zx$, $z\in \mathbb {C}$, $\Im z < 0$. Then for any $f\in L_1$, using  $\widehat f_z(0) = - z^{-1}$, we get
\[
\begin{split}
\mathcal T(f) (\mathcal T(f_z)u - x) &= \mathcal T(f* f_z)y - z \mathcal T(f*f_z)x - \mathcal T(f)x \\
& = \mathcal T(f* f_z)y - z\int f_z(s)\mathcal T(f)(\mathcal T(-s)x - x)ds \\
& = \mathcal T(f* f_z)y + zi\int_{-\infty}^0 e^{izs} \mathcal T(\chi_{-s})\mathcal T(f)yds \\
& = \mathcal T(f* f_z)y - z\int_{-\infty}^0\int_s^0  e^{izs}\mathcal T(-\tau)\mathcal T(f)yd\tau ds \\
& = \mathcal T(f* f_z)y - z\int_{-\infty}^0\int_{-\infty}^{\tau}  e^{izs}\mathcal T(-\tau)\mathcal T(f)ydsd\tau
\\& = \mathcal T(f* f_z)y + i\int_{-\infty}^0  e^{iz\tau}\mathcal T(-\tau)\mathcal T(f)yd\tau
\\& = \mathcal T(f* f_z)y -  \mathcal T(f_z* f)y= 0,
\end{split}
\]
and  the equality $x = \mathcal T(f_z)u$ follows since the module $\mathcal{(X,T)}$ is non-degenerate. Similar equalities hold for $z\in \mathbb {C}$ with $\Im z > 0$ and the lemma is proved.
\end{proof}

The following useful result   contains a version of the spectral mapping theorem. It follows from \cite[Theorem 3.3.11]{B04} and \cite[Theorem 2.12]{BK14}
(see also \cite{B79, BD19}). 

\begin{theorem}\label{smt}
Let $A$ be the generator of a Banach $L_1$-module $(\mathcal{X, T})$. Then
\[
\sigma(A) = \Lambda(\mathcal{X, T}).
\]
Let also $K$ be a compact subset of $\mathbb R$ and $A_K$ be the restriction of the generator  $A$ to the spectral submodule $\mathcal X(K)$. Then $A_K \in \mathrm{End}\,\mathcal{X}(K)$ is the generator of the Banach $L_1$-module $(\mathcal{X}(K), \mathcal{T}_K)$, where the representation $\mathcal{T}_K: \mathbb R \to \mathrm{End}\,\mathcal{X}(K)$ is given by $\mathcal T_K(t)x = e^{itA_K}x = \mathcal T(t)x$, $t\in\mathbb R$, $x\in\mathcal X(K)$. In this setting, $\sigma(A_K) = \Lambda(\mathcal{X, T})\cap K$, and the norm of the operator $A_K$ coincides with its spectral radius. Moreover, the representation $\mathcal{T}_K$ is continuous in the uniform operator topology and admits a holomorphic extension to an entire function $\mathcal{T}_K: \mathbb C \to \mathrm{End}\,\mathcal{X}(K)$, $\mathcal{T}_K(z) = e^{izA_K} = \mathcal{T}_K(t)e^{-i\alpha A_K} = e^{-i\alpha A_K} \mathcal{T}_K(t)$, $z = t+i\alpha$, for which
\begin{equation}\label{cnormest}
\|\mathcal T_K(t+i\alpha)\| =\|\mathcal T_K(i\alpha)\|\le \max_{\lambda\in\sigma(A_K)} e^{-\alpha\lambda},\ t,\alpha\in\mathbb R.
\end{equation}
\end{theorem}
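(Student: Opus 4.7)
My plan splits the theorem into three pieces. For the spectral identity $\sigma(A)=\Lambda(\mathcal{X,T})$, the argument hinges on the defining formula $R(z) = \mathcal T(f_z)$ for $z\in\mathbb C\setminus\mathbb R$ together with Lemma \ref{resgen}. For $\Lambda(\mathcal{X,T})\subseteq\sigma(A)$: if $\lambda_0\in\rho(A)$, then the resolvent is analytic in a complex neighborhood of $\lambda_0$; using Lemma \ref{resgen} to close contours at $\pm i\infty$, one integrates $R(z)$ against a suitable test function supported near $\lambda_0$ to exhibit an $f\in L_1$ with $\widehat f(\lambda_0)\ne 0$ and $\mathcal T(f)=0$. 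For the reverse inclusion, given $\lambda_0\notin\Lambda(\mathcal{X,T})$, the existence of $f\in L_1$ with $\widehat f(\lambda_0)\ne 0$ and $\mathcal T(f)=0$ allows one to build $R(\lambda_0)$ by convolution manipulations (using Cohen--Hewitt factorization from Theorem \ref{cht} to work first on $\mathcal X_\Phi = \mathcal X_c$) and then extend by closedness of $A$ to all of $\mathcal X$. This equality is essentially \cite[Theorem 2.12]{BK14}.

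For the bounded restriction $A_K$, I would pick a Schwartz $\psi$ with $\widehat\psi$ smooth, compactly supported, and identically $1$ on $K$, and set $\widehat\phi(\lambda)=\lambda\widehat\psi(\lambda)$, so that $\phi$ is also Schwartz and in particular in $L_1$. Using $\widehat\chi_t(\lambda)=(e^{i\lambda t}-1)/\lambda$, a direct Fourier check yields the $L_1$-identity $S(t)\psi-\psi=\chi_t*\phi$ for $t>0$. For $x\in\mathcal X(K)$, Lemma \ref{basklh2.1new}(6) gives $\psi x = x$, so applying the module action to this identity and evaluating at $x$ produces $\mathcal T(t)x-x=\mathcal T(\chi_t)\mathcal T(\phi)x$; by Lemma \ref{equigen}, this forces $\mathcal X(K)\subseteq D(A)$ and $A_K x=\mathcal T(\phi)x$, yielding $A_K\in\mathrm{End}\,\mathcal X(K)$. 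Since $\mathcal X(K)$ is $\mathcal T(t)$-invariant (Lemma \ref{basklh2.1new}(3)), the pair $(\mathcal X(K),\mathcal T_K)$ is a non-degenerate Banach $L_1$-module with generator $A_K$; applying the already-proved spectral identity to it gives $\sigma(A_K)=\Lambda(\mathcal X(K),\mathcal T_K)=K\cap\sigma(A)$.

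For the entire extension and norm estimate, boundedness of $A_K$ makes $\mathcal T_K(z):=e^{izA_K}$ entire on $\mathbb C$, and its restriction to $\mathbb R$ is a uniformly continuous group of isometries agreeing with $\mathcal T|_{\mathcal X(K)}$ by the uniqueness of the module structure associated to a given representation (cf.\ the remark after \eqref{basknew2}). The factorizations $\mathcal T_K(t+i\alpha)=\mathcal T_K(t)e^{-\alpha A_K}=e^{-\alpha A_K}\mathcal T_K(t)$ follow from the one-parameter group law, and isometry on $\mathbb R$ yields $\|\mathcal T_K(t+i\alpha)\|=\|\mathcal T_K(i\alpha)\|$. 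The equality $\|e^{itA_K}\|=1$ for real $t$ makes $A_K$ a hermitian (Lumer-sense) element of $\mathrm{End}\,\mathcal X(K)$, so Sinclair's theorem gives $\|A_K\|=r(A_K)$. For the sharp estimate with $\alpha\ge 0$, write $A_K=\sigma_-I+B$ with $\sigma_-=\min\sigma(A_K)$ and $B=A_K-\sigma_-I$ hermitian with $\sigma(B)\subseteq[0,\infty)$; the numerical range of $B$ equals $\mathrm{co}\,\sigma(B)\subseteq[0,\infty)$, so $-B$ is dissipative and $(e^{-\alpha B})_{\alpha\ge 0}$ is a contractive semigroup by Lumer--Phillips. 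Thus $\|e^{-\alpha A_K}\|=e^{-\alpha\sigma_-}\|e^{-\alpha B}\|\le e^{-\alpha\sigma_-}=\max_{\lambda\in\sigma(A_K)}e^{-\alpha\lambda}$; the case $\alpha<0$ is handled symmetrically by swapping the roles of $\sigma_-$ and $\sigma_+=\max\sigma(A_K)$.

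The main obstacle is the spectral identity itself; the approximation and closure arguments in both directions genuinely require the full module apparatus combined with Lemma \ref{resgen}, and this is where the full strength of the Banach-module spectral theory is invoked. The remaining parts, while technical, follow relatively standard lines once the hermitian-element machinery of Sinclair and Lumer--Phillips is at hand.
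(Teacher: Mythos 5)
The paper does not actually prove Theorem \ref{smt}: it is imported wholesale from \cite[Theorem 3.3.11]{B04} and \cite[Theorem 2.12]{BK14}, so there is no in-text argument to compare yours against line by line. Your proposal is therefore necessarily a different route -- a genuine self-contained proof -- and most of it is sound. The treatment of $A_K$ via the pair $\psi,\phi$ with $\widehat\phi(\lambda)=\lambda\widehat\psi(\lambda)$ and Lemma \ref{equigen} is clean and correct (just take $\widehat\psi\equiv 1$ on a \emph{neighborhood} of $K$ so that Lemma \ref{basklh2.1new}(6) applies), and the Sinclair/Lumer--Phillips argument for $\|A_K\|=r(A_K)$ and the bound \eqref{cnormest} is a legitimate and efficient way to get the sharp constant $\max_{\lambda\in\sigma(A_K)}e^{-\alpha\lambda}$.

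Two steps need repair. First, in proving $\lambda_0\notin\Lambda(\mathcal{X,T})\Rightarrow\lambda_0\in\rho(A)$ you propose to ``work first on $\mathcal X_\Phi=\mathcal X_c$ and then extend by closedness of $A$ to all of $\mathcal X$.'' That extension step fails in general: $\mathcal X_c$ need not be dense in $\mathcal X$ (for $\mathcal X=L_\infty(\mathbb R)$ with translations, $\mathcal X_c=C_{ub}(\mathbb R)$), and closedness of $A$ extends nothing from a non-dense subspace. The fix is to build the candidate resolvent globally: choose $\widehat\varphi\equiv 1$ near $\lambda_0$ with $\supp\widehat\varphi\cap\Lambda(\mathcal{X,T})=\emptyset$, set $\widehat h(\lambda)=(\lambda-\lambda_0)^{-1}(1-\widehat\varphi(\lambda))$ (which lies in $\widehat{L}_1$ by Lemma \ref{basklh2.2}, exactly as for $\omega_a$), and verify the $L_1$-identity $S(t)h-h=\chi_t-\chi_t*\varphi+\lambda_0(\chi_t*h)$ on the Fourier side; Lemma \ref{equigen} then gives $\mathcal T(h)x\in D(A)$ and $(A-\lambda_0 I)\mathcal T(h)x=x-\mathcal T(\varphi)x=x$ for \emph{every} $x\in\mathcal X$, with no density needed. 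Second, you assert $\Lambda(\mathcal X(K),\mathcal T_K)=\Lambda(\mathcal{X,T})\cap K$ without argument. The inclusion $\subseteq$ is immediate, but $\supseteq$ is genuinely delicate: passing to the spectral submodule can lose spectrum at points of $K$ that are not accumulated by Beurling spectra of vectors \emph{inside} $K$ (for $(L_2,S)$ and $K=\{0\}$ one has $\mathcal X(K)=\{0\}$, hence $\sigma(A_K)=\emptyset\neq\{0\}=\Lambda(\mathcal{X},S)\cap K$). This is arguably an imprecision inherited from the theorem statement itself, but your proof should at least record which inclusion it actually establishes, or add the hypothesis under which equality holds (e.g., that every $\lambda\in\Lambda(\mathcal{X,T})\cap K$ is approximated by $\Lambda(x)$ for vectors $x$ with $\Lambda(x)\subseteq K$).
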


\begin{corollary}\label{cnormest1}
Let $x\in (\mathcal{X, T})$ satisfy $\Lambda(x, \mathcal T) \subseteq [-a,a]$ for some $a\ge 0$. Then the function $\tau_x: \mathbb{R}\to\mathcal{X}$ from Definition  \ref{baskdef2.3} admits a holomorphic  extension to an entire function of exponential type and 
\begin{equation}\label{cnormest2}
\|\tau_x(t+i\alpha)\| =\|\tau_x(i\alpha)\|\le  e^{\alpha a},\ t,\alpha\in\mathbb R.
\end{equation}
\end{corollary}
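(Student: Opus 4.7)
The plan is to deduce this directly from Theorem \ref{smt} applied to the compact set $K=[-a,a]$. Since $\Lambda(x)\subseteq K$, the vector $x$ lies in the spectral submodule $\mathcal{X}(K)$, and Theorem \ref{smt} supplies an entire operator-valued extension $\mathcal{T}_K\colon\mathbb{C}\to\mathrm{End}\,\mathcal{X}(K)$ of the restricted representation, with $\mathcal{T}_K(z)=e^{izA_K}$. I would then define the candidate extension of $\tau_x$ by
\[
\tau_x(z) := \mathcal{T}_K(z)x, \quad z\in\mathbb{C}.
\]
This is an $\mathcal{X}$-valued entire function by the entirety of $\mathcal{T}_K$, and it agrees with the original $\tau_x(t)=\mathcal{T}(t)x$ for real $t$ by the identity $\mathcal{T}_K(t)x=\mathcal{T}(t)x$, $t\in\mathbb R$, recorded in Theorem \ref{smt}.

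For the equality $\|\tau_x(t+i\alpha)\|=\|\tau_x(i\alpha)\|$, I would use the factorization $\mathcal{T}_K(t+i\alpha)=\mathcal{T}_K(t)\mathcal{T}_K(i\alpha)$ from Theorem \ref{smt}, together with the observation that $\mathcal{T}_K(t)=\mathcal{T}(t)|_{\mathcal{X}(K)}$ is isometric on $\mathcal{X}(K)$ (the ambient isometry of $\mathcal{T}(t)$ on $\mathcal{X}$ restricts to any submodule). For the inequality, the spectral identification $\sigma(A_K)=\sigma(A)\cap K\subseteq[-a,a]$ combined with estimate \eqref{cnormest} yields
\[
\|\tau_x(i\alpha)\|\le \|\mathcal{T}_K(i\alpha)\|\,\|x\| \le \Bigl(\max_{\lambda\in[-a,a]} e^{-\alpha\lambda}\Bigr)\|x\| = e^{|\alpha|a}\|x\|,
\]
which gives \eqref{cnormest2} up to the normalization of $x$.

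The exponential type assertion is then immediate: combining the two displays gives the global bound $\|\tau_x(z)\|\le e^{|\Im z|\,a}\|x\|$ for all $z\in\mathbb{C}$, which is precisely the statement that $\tau_x$ extends to an entire function of exponential type at most $a$. I do not expect a genuine obstacle: the corollary is essentially the single-vector specialization of Theorem \ref{smt}, and the only point meriting a brief verification is that the isometry of $\mathcal{T}(t)$ on $\mathcal{X}$ descends to an isometry on the closed submodule $\mathcal{X}(K)$, which is immediate since $\mathcal{X}(K)$ inherits the ambient norm.
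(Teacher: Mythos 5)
Your proposal is correct and follows exactly the route the paper intends: the corollary is stated without proof precisely because it is the single-vector specialization of Theorem \ref{smt} applied to $K=[-a,a]$, which is what you carry out. Your observation that the resulting bound is really $e^{|\alpha|a}\|x\|$ (rather than the literal $e^{\alpha a}$ of \eqref{cnormest2}) is also accurate and flags a harmless normalization/absolute-value slip in the paper's statement rather than a gap in your argument.
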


We note that $x\in (\mathcal{X, T})_c$ satisfies $\Lambda(x, \mathcal T) \subseteq [0,\infty)$ if and only if the function  $\tau_x: \mathbb{R}\to\mathcal{X}$ from Definition  \ref{baskdef2.3} has a bounded continuous  extension to the upper half-plane of $\mathbb C$, which is holomorphic in its interior (see \cite[Lemma 8.2]{BK05}). 

The proofs of our results on similarity of linear operators require introduction of Banach module structures on large subspaces of the Banach space $\mathrm{End}\,\mathcal{X}$.

We define the representation  $\mathcal T_0: \mathbb{R}\to\mathrm{End}\,\mathrm{End}\,\mathcal{X}$, by
\begin{equation}\label{T0}
\mathcal T_0(t)B=\mathcal{T}(t)B\mathcal{T}(-t), \quad t\in\mathbb{R}, \quad B\in\mathrm{End}\,\mathcal{X}.
\end{equation}

Let $\mathcal{M}_0$ be the subspace of $\mathrm{End}\,\mathcal{X}$ that consists of all operators $B \in\mathrm{End}\,\mathcal{X}$ such that the function $t \mapsto \mathcal T_0(t)B$ is continuous in the strong operator topology and the nets $\mathcal T(f)(B\mathcal T(e_\alpha) - \mathcal T(e_\alpha)B)$ converge to $0$ (in the strong operator topology) for any function $f\in L_1$ and some b.a.i.~$(e_\alpha)$ from $L_1$. Note that if the representation  $\mathcal T$ is strongly continuous then $\mathcal{M}_0 = \mathrm{End}\,\mathcal{X}$. Crucially for us, the formula  $$
(fB)x=(\mathcal T_0(f)B)x=\int_{\mathbb{R}}f(t)(\mathcal T_0(-t)B)x\,dt, \quad f\in L_1, \quad x\in\mathcal{X},
$$
defines a structure of a non-degenerate Banach $L_1(\mathbb{R})$-module on $\mathcal{M}_0$, that is associated with the representation  $\mathcal T_0$ (see \cite[Lemma 5.11]{BK05}). It is also worth noting that operators in $\mathcal M_0$ are completely determined by their values on $\mathcal{X}_{comp}$.

\begin{lemma}\label{basknewlh2.2}
Assume that operators $F$ and $G$ belong to $\mathcal{M}_0$ and $x\in\mathcal{X}$. The following properties hold. 
\begin{enumerate}
    \item $\Lambda(Fx, \mathcal{T})\subseteq \overline{\Lambda(F, \mathcal T_0)+\Lambda(x, \mathcal{T})}$.
    \item $\Lambda(FG, \mathcal T_0)\subseteq \overline{\Lambda(F, \mathcal T_0)+\Lambda(G, \mathcal T_0)}$.
    \item $\Lambda(F, \mathcal T_0)\subseteq \overline{\Lambda(\mathcal{X}, \mathcal{T})-\Lambda(\mathcal{X}, \mathcal{T})}=
\overline{\sigma(A)-\sigma(A)}$.
\end{enumerate}
\end{lemma}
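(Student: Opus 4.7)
My plan is to prove parts 1 and 2 uniformly via an abstract bilinear spectral inclusion, and to derive part 3 from part 1 using the spectral mapping theorem (Theorem \ref{smt}) together with the density statement in Theorem \ref{cht}. From \eqref{T0}, direct calculations yield the intertwining identities
\[
\mathcal T(t)(Fx) = (\mathcal T_0(t)F)(\mathcal T(t)x), \qquad \mathcal T_0(t)(FG) = (\mathcal T_0(t)F)(\mathcal T_0(t)G),
\]
so both parts 1 and 2 are instances of the following abstract fact: for any bounded bilinear pairing $\pi: \mathcal X_1 \times \mathcal X_2 \to \mathcal Y$ of non-degenerate isometric Banach $L_1(\mathbb R)$-modules intertwined by the corresponding representations, $\Lambda(\pi(a,b)) \subseteq \overline{\Lambda(a) + \Lambda(b)}$.

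To prove this abstract inclusion, I would fix $\lambda_0 \notin \overline{\Lambda(a) + \Lambda(b)}$, choose a relatively compact open neighborhood $V$ of $\lambda_0$ disjoint from $\Lambda(a) + \Lambda(b)$, and select $f \in L_1$ with $\widehat f(\lambda_0)\neq 0$ and $\supp \widehat f \subset V$. From the intertwining,
\[
\mathcal T_{\mathcal Y}(f)\pi(a,b) = \int_{\mathbb R} f(t)\, \pi\bigl(\mathcal T_1(-t)a,\,\mathcal T_2(-t)b\bigr)\,dt,
\]
and the objective is to rewrite this single diagonal integral as a finite sum that decouples the $\mathcal T_1$-action on $a$ from the $\mathcal T_2$-action on $b$. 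Specifically, I would produce a decomposition $f = \sum_i g_i*h_i$ with $g_i, h_i\in L_1$, obtained from a partition of unity subordinate to a finite cover of $V$ by arithmetic sums $I_i+J_i \subset V$ of open intervals, each pair $(I_i, J_i)$ chosen so that either $I_i \cap \Lambda(a) = \emptyset$ or $J_i \cap \Lambda(b) = \emptyset$ (such a cover exists because $V$ is relatively compact and avoids $\Lambda(a) + \Lambda(b)$, and both $\Lambda(a), \Lambda(b)$ are closed). The resulting identity
\[
\mathcal T_{\mathcal Y}(f)\pi(a,b) = \sum_i \pi\bigl(\mathcal T_1(g_i)a,\,\mathcal T_2(h_i)b\bigr)
\]
follows from bilinearity and the intertwining, and each summand vanishes by property 5 of Lemma \ref{basklh2.1new}. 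Hence $\lambda_0 \notin \Lambda(\pi(a,b))$, establishing parts 1 and 2.

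For part 3, I would fix $\lambda \notin \overline{\sigma(A) - \sigma(A)}$, a relatively compact open neighborhood $V$ of $\lambda$ satisfying $\overline V \cap \overline{\sigma(A) - \sigma(A)} = \emptyset$, and $f \in L_1$ with $\widehat f(\lambda)\neq 0$ and $\supp \widehat f \subset V$. Since $\mathcal M_0$ is a Banach $L_1$-module under $\mathcal T_0$, $\mathcal T_0(f)F \in \mathcal M_0$ and property 4 of Lemma \ref{basklh2.1new} gives $\Lambda(\mathcal T_0(f)F, \mathcal T_0) \subseteq V$. For any $y \in \mathcal X_{comp}$, part 1 yields $\Lambda((\mathcal T_0(f)F)y, \mathcal T) \subseteq \overline V + \Lambda(y)$, which is a compact (hence closed) subset of $\mathbb R$, and Theorem \ref{smt} gives $\Lambda((\mathcal T_0(f)F)y) \subseteq \sigma(A)$. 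Any point $v+\mu \in \sigma(A)$ with $v\in\overline V$ and $\mu\in\Lambda(y)\subseteq\sigma(A)$ would satisfy $v = (v+\mu)-\mu \in \sigma(A)-\sigma(A)$, contradicting $\overline V \cap \overline{\sigma(A)-\sigma(A)} = \emptyset$. Thus $\Lambda((\mathcal T_0(f)F)y) = \emptyset$, and property 2 of Lemma \ref{basklh2.1new} forces $(\mathcal T_0(f)F)y = 0$. Since operators in $\mathcal M_0$ are determined by their action on $\mathcal X_{comp}$, $\mathcal T_0(f)F = 0$ on all of $\mathcal X$, and consequently $\lambda \notin \Lambda(F, \mathcal T_0)$.

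The main obstacle is the two-variable Fourier decomposition of $\widehat f$ used in the abstract bilinear inclusion: one must convert a one-dimensional Fourier-support constraint into a product-type constraint on $\mathbb R^2$ and then extract a finite cover by product boxes even when $\Lambda(a)$ or $\Lambda(b)$ is non-compact. The relative compactness of $V$ and the regularity of the Fourier algebra $\widehat L_1$ are essential here; once this step is in place, parts 1, 2 and (via part 1) part 3 follow without further difficulty.
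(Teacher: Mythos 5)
Your argument for part 3 is sound and essentially reproduces the paper's own proof: the paper likewise picks $f$ with $\widehat f(\lambda)\neq 0$ and $\supp\widehat f$ disjoint from $\Delta=\overline{\sigma(A)-\sigma(A)}$, applies part 1 to get $\Lambda((\mathcal T_0(f)F)x,\mathcal T)\subseteq(\Delta^c+\Lambda(\mathcal X,\mathcal T))\cap\Lambda(\mathcal X,\mathcal T)=\emptyset$, and concludes $\mathcal T_0(f)F=0$. For parts 1 and 2 the paper does not give a proof at all — it cites \cite[Corollaries 5.15 and 7.8]{BK05} — whereas you attempt one from scratch, and that is where the problem lies.

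The key identity you rely on,
\[
\mathcal T_{\mathcal Y}(f)\pi(a,b)=\sum_i\pi\bigl(\mathcal T_1(g_i)a,\,\mathcal T_2(h_i)b\bigr)\quad\text{for}\quad f=\sum_i g_i*h_i,
\]
is false. Test it on eigenvectors: if $\mathcal T_1(t)a=e^{i\mu t}a$ and $\mathcal T_2(t)b=e^{i\nu t}b$, then $\pi(a,b)$ is an eigenvector of $\mathcal T_{\mathcal Y}$ with eigenvalue $\mu+\nu$, so for a single term $f=g*h$ the left side equals $\widehat g(\mu+\nu)\,\widehat h(\mu+\nu)\,\pi(a,b)$ while the right side equals $\widehat g(\mu)\,\widehat h(\nu)\,\pi(a,b)$; these differ in general (concretely, in Example \ref{exa5} take $F$ supported on the $n$-th diagonal and $x=e_m$). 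Moreover, the proposed decomposition cannot have the stated support properties: $\widehat{g_i*h_i}=\widehat{g_i}\,\widehat{h_i}$ is supported in $\supp\widehat{g_i}\cap\supp\widehat{h_i}$, not in the arithmetic sum $I_i+J_i$, so choosing $I_i\cap\Lambda(a)=\emptyset$ or $J_i\cap\Lambda(b)=\emptyset$ typically forces $g_i*h_i=0$ and the terms cannot sum to $f$. The real difficulty — which your closing paragraph names but does not resolve — is that $\mathcal T_{\mathcal Y}(f)\pi(a,b)$ corresponds to applying the two-variable symbol $(\mu,\nu)\mapsto\widehat f(\mu+\nu)$ to the pair $(a,b)$, whereas $\pi(\mathcal T_1(g)a,\mathcal T_2(h)b)$ corresponds to the product symbol $\widehat g(\mu)\widehat h(\nu)$; relating the two requires the $L_1(\mathbb R^2)$-action $\Psi(k)=\iint k(s,t)\,\pi(\mathcal T_1(-s)a,\mathcal T_2(-t)b)\,ds\,dt$, a two-dimensional synthesis argument on a neighborhood of $\Lambda(a)\times\Lambda(b)$, and an approximation reducing to vectors with compact spectrum. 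That is precisely the content of the cited results in \cite{BK05}; a one-dimensional factorization of $f$ is not a substitute for it.
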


\begin{proof}
Properties 1 and 2 are proved in \cite[Corollaries 5.15 and 7.8]{BK05}. The equality in 3 follows from Theorem \ref{smt}. Let us prove the containment in 3 using 1.
Let
$$
\Delta=\overline{\Lambda(\mathcal{X}, \mathcal{T})-\Lambda(\mathcal{X}, \mathcal{T})},
$$
and $\lambda\notin\Delta$. Consider $f\in L_1(\mathbb{R})$ such that $\widehat{f}(\lambda)\ne 0$ and
$\mathrm{supp}\,\widehat{f}\cap\Delta=\emptyset$. Then, according to property $1$, given any $x\in\mathcal{X}$, we have
\begin{flalign*}
\Lambda((\mathcal T_0(f)F)x, \mathcal{T}) &\subseteq \overline{\Lambda(\mathcal T_0(f)F, \mathcal T_0)+\Lambda(x, \mathcal{T})}\subseteq
\overline{\mathrm{supp}\,\widehat{f}+\Lambda(\mathcal{X}, \mathcal{T})}\subseteq\\
&\subseteq \Delta^c+\Lambda(\mathcal{X}, \mathcal{T})
\subseteq (\Lambda(\mathcal{X}, \mathcal{T}))^c.
\end{flalign*}
Consequently, $\Lambda((\mathcal T_0(f)F)x, \mathcal{T})=\emptyset$ and $(\mathcal T_0(f), F)x=0$ by property 4 of Lemma \ref{basklh2.1new}.
Since $x\in\mathcal{X}$ was chosen arbitrarily, we get $\mathcal T_0(f)F=0$, and, therefore, $\lambda\notin\Lambda(F, \mathcal T_0)$.
\end{proof}

We also provide the following useful result about the generator of the module $(\mathcal{M}_0, \mathcal{T}_0)$.

\begin{theorem}
Let $A$ be the generator of a Banach $L_1$-module $(\mathcal{X, T})$. Then the generator of the Banach module $(\mathcal{M}_0, \mathcal{T}_0)$ is the commutator $[A]: D([A])\subseteq \mathcal{M}_0 \to \mathcal{M}_0$, $[A]X = AX - XA$, $X\in D([A])$, where 
$$
D([A]) = \{X \in(\mathcal{M}_0)_c: AX - XA \mbox{ extends uniquely to an operator in } \mathcal M_0\}.
$$
\end{theorem}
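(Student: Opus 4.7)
My plan is to use the intrinsic characterization of module generators provided by Lemma \ref{equigen}, applied to the module $(\mathcal M_0,\mathcal T_0)$. By that lemma, $X$ lies in the domain of the generator and is mapped to $Y\in\mathcal M_0$ if and only if
\[
\mathcal T_0(t)X - X = \mathcal T_0(\chi_t)Y \quad \text{for all } t>0.
\]
Thus I must show this identity is equivalent to the stated definition of $D([A])$: that $X\in(\mathcal M_0)_c$ and $Y\in\mathcal M_0$ uniquely extends the commutator $AX-XA$ (initially defined on $\mathcal D_X:=\{x\in D(A):Xx\in D(A)\}$).

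For the forward direction, the above identity and the module bound $\|\mathcal T_0(\chi_t)Y\|\le t\|Y\|$ immediately yield norm-continuity of $t\mapsto\mathcal T_0(t)X$ via the isometry of $\mathcal T_0$, giving $X\in(\mathcal M_0)_c$. For $x\in\mathcal D_X$, I use the algebraic identity $\mathcal T_0(t)X\cdot\mathcal T(t)=\mathcal T(t)X$ to decompose
\[
\mathcal T(t)X\mathcal T(-t)x - Xx = [\mathcal T(t)-I]Xx - \mathcal T_0(t)X[\mathcal T(t)-I]x,
\]
divide by $t$, and take $t\to 0^+$: the first summand converges to $iAXx$ (since $Xx\in D(A)$), and the second converges to $iXAx$ (since $\mathcal T_0(t)X\to X$ in operator norm and $[\mathcal T(t)-I]x/t\to iAx$). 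The right side divided by $t$ converges to $iYx$, so $Y$ coincides with $AX-XA$ on $\mathcal D_X$. Uniqueness of $Y$ in $\mathcal M_0$ follows from non-degeneracy: any rival $Y'\in\mathcal M_0$ would yield $\mathcal T_0(\chi_t)(Y-Y')=0$ for all $t>0$; since the family $\{\widehat\chi_t\}_{t>0}$ has no common zero on $\mathbb R$, Lemma \ref{basklh2.1new} forces $\Lambda(Y-Y',\mathcal T_0)=\emptyset$ and hence $Y=Y'$.

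For the reverse direction, given $X\in(\mathcal M_0)_c$ and $Y\in\mathcal M_0$ extending $AX-XA$, I verify the main identity pointwise by checking
\[
\mathcal T(t)X\mathcal T(-t)x - Xx = i\int_0^t \mathcal T(u)Y\mathcal T(-u)x\,du, \qquad x\in\mathcal X.
\]
Whenever $X\mathcal T(-u)x\in D(A)$ for all $u\in[0,t]$, this follows from the fundamental theorem of calculus applied to $u\mapsto\mathcal T(u)X\mathcal T(-u)x$, whose derivative equals $i\mathcal T(u)(AX-XA)\mathcal T(-u)x = i\mathcal T(u)Y\mathcal T(-u)x$. To enforce this domain condition on a dense subspace, I regularize $X$ as $X_\epsilon:=\mathcal T_0(\phi_\epsilon)X$ with $\phi_\epsilon\in L_1$ forming a bounded approximate identity whose Fourier transforms have compact support. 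By Lemma \ref{basknewlh2.2}(1) combined with Theorem \ref{smt}, $X_\epsilon$ has compact Beurling spectrum and maps $\mathcal X_{comp}$ back into $\mathcal X_{comp}\subseteq D(A)$, so the identity holds for $X_\epsilon$ and $Y_\epsilon:=AX_\epsilon-X_\epsilon A$ on all $x\in\mathcal X_{comp}$. Sending $\epsilon\to 0$, one has $X_\epsilon\to X$ in operator norm (by Theorem \ref{cht}, since $X\in(\mathcal M_0)_c$) and $Y_\epsilon x\to Yx$ for $x\in\mathcal X_{comp}$ via the assumption that $Y$ extends $AX-XA$; this transfers the identity to $X,Y$ on $\mathcal X_{comp}$, and boundedness of both sides then extends it to all of $\mathcal X$.

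The principal obstacle is the domain condition in the reverse direction: without spectral truncation, $X\mathcal T(-u)x$ need not lie in $D(A)$. The regularization $X_\epsilon$ resolves this at the cost of a careful limiting argument, which leverages both $X\in(\mathcal M_0)_c$ and the compatibility between the module structure on $(\mathcal M_0,\mathcal T_0)$ and the commutator operation. The forward direction, by contrast, reduces to the above algebraic decomposition followed by a straightforward differentiation.
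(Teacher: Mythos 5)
Your overall architecture---running Lemma \ref{equigen} in the module $(\mathcal M_0,\mathcal T_0)$ in both directions, with a differentiation argument one way and a fundamental-theorem-of-calculus argument plus spectral regularization the other way---is a legitimate alternative to the paper's proof, which instead uses the resolvent representation $X=\mathcal T_0(f_z)Y$ for the inclusion $D(A_0)\subseteq D([A])$ and a smearing identity $[A]\mathcal T_0(f)=\mathcal T_0(f)[A]$ for the converse. However, your forward direction contains a genuine gap: the step ``$[\mathcal T(t)-I]x/t\to iAx$'' for $x\in\mathcal D_X\subseteq D(A)$ is false in the generality of this paper, where $\mathcal T$ need not be strongly continuous. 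Writing $\mathcal T(t)x-x=\mathcal T(\chi_t)(Ax)=it\,\mathcal T(e_t)(Ax)$ with the b.a.i.\ $e_t=\tfrac1t\chi_{[-t,0]}$, Theorem \ref{cht} shows that this difference quotient converges in norm (necessarily to $iAx$) precisely when $Ax\in\mathcal X_c$, which is not automatic for $x\in D(A)$: for $\mathcal X=L_\infty(\mathbb R)$ with translation one has $D(A)=\{x:\ x'\in L_\infty\}$ and $\mathcal X_c=C_{ub}$, so any bounded Lipschitz $x$ with discontinuous derivative defeats the claim. The same objection applies to ``$[\mathcal T(t)-I]Xx/t\to iAXx$'', which needs $A(Xx)\in\mathcal X_c$. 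The gap is localized and repairable---restrict the test vectors to $x\in\mathcal X_{comp}$, where $Ax\in\mathcal X_{comp}\subseteq\mathcal X_c$ by Theorem \ref{smt} and where, for $X\in(\mathcal M_0)_c$, one still has $Xx\in D(A)$; this suffices because operators in $\mathcal M_0$ are determined by their values on $\mathcal X_{comp}$---but as written the argument rests on a limit that does not exist.

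In the reverse direction, the decisive step ``$Y_\epsilon x\to Yx$ via the assumption that $Y$ extends $AX-XA$'' is asserted rather than proved, and the naive justification is circular: $Y_\epsilon x=AX_\epsilon x-X_\epsilon Ax$, and to extract $AX_\epsilon x\to AXx$ from closedness of $A$ you must already know that $AX_\epsilon x$ converges, which is equivalent to the convergence of $Y_\epsilon x$ you are trying to establish. The statement is true, but the honest route is to push $A$ through the defining integral of $X_\epsilon=\mathcal T_0(\phi_\epsilon)X$ and use that $Y$ agrees with $AX-XA$ on the $\mathcal T(s)$-orbit of $\mathcal X_{comp}$ to get $Y_\epsilon x=(\mathcal T_0(\phi_\epsilon)Y)x\to Yx$, the last limit holding by the strong continuity built into the definition of $\mathcal M_0$; this is essentially the smearing computation the paper performs, so the detour through differentiation does not actually buy you anything here. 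Finally, note that your uniqueness argument establishes uniqueness of the solution $Y$ of the functional equation in Lemma \ref{equigen}, not uniqueness of the extension of $AX-XA$ demanded by the definition of $D([A])$; the latter should instead be deduced from $\mathcal X_{comp}\subseteq\mathcal D_X$ together with the fact that elements of $\mathcal M_0$ are determined on $\mathcal X_{comp}$.
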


\begin{proof}
Let $A_0$ be the generator of the Banach module $(\mathcal{M}_0, \mathcal{T}_0)$.
As we mentioned above, $D(A_0) \subseteq (\mathcal{\mathcal{M}}_0)_c$ due to Theorem \ref{cht}. Moreover, if $\mathcal M_0 = (\mathcal{\mathcal{M}}_0)_c$, it is well known \cite{BKU20, EN00} that $A_0 = [A]$. Hence, in general, $A_0$ coincides with $[A]$ on the domain $\mathcal D_0$ of the generator of the module $(\mathcal{M}_0, \mathcal{T}_0)_c$.
Let us show that the two operators are, indeed, the same. Since $A$ commutes with each $T(t)$, $t\in\mathbb R$, we have
\[
  (\mathcal{T}_0(f)[A]X)x  = \int f(t)\mathcal T(-t)(AX-XA)\mathcal T(t)x dt 
  = \int f(t)(A\mathcal T(-t)X\mathcal T(t)-\mathcal T(-t)X\mathcal T(t)A)x dt 
\]
for each $f\in L_1$,  $X\in D([A])$, and $x \in \mathcal{X}_{comp}$ such that $XT(t)x \in D(A)$ for all $t\in\mathbb R$. It follows that $[A]\mathcal{T}_0(f) = \mathcal{T}_0(f)[A]$ on $D([A])$ for every $f\in L_1$. Therefore, for each $X\in D([A])\cap D(A_0)$ and $f\in L_1$ with $\supp \widehat f$ compact we have 
\[
\mathcal{T}_0(f)([A]X) = [A](\mathcal{T}_0(f)X) = A_0(\mathcal{T}_0(f)X) = \mathcal{T}_0(f)(A_0X),
\]
where the second equality holds because $\mathcal{T}_0(f)X$ is in $\mathcal{D}_0$, and the last equality follows from \cite[(2.2) and (2.5)]{BKU20}.
Now, non-degeneracy of the module $\mathcal M_0$ implies that $A_0$ coincides with $[A]$ on $D([A])\cap D(A_0)$.
It remains to prove that $D([A])= D(A_0)$.

Assume that $X \in D([A])$ and $Y = [A]X$. Then for any $t > 0$, $\chi_t$ from Lemma \ref{equigen}, and $f\in L_1$ with $\supp \widehat f$ compact we have 
\[
\begin{split}
    \mathcal{T}_0(f)\left(\mathcal{T}_0(t)X - X - \mathcal{T}_0(\chi_t)Y\right) & = \mathcal{T}_0(t)\mathcal{T}_0(f)X - \mathcal{T}_0(f)X - \mathcal{T}_0(\chi_t)[A](\mathcal{T}_0(f)X) \\& =  \mathcal{T}_0(t)\mathcal{T}_0(f)X - \mathcal{T}_0(f)X - \mathcal{T}_0(\chi_t)A_0(\mathcal{T}_0(f)X) = 0
\end{split}
\]
by Lemma \ref{equigen} since $ (\mathcal{T}_0(f)X) \in \mathcal D_0 \subset D(A_0)\cap D([A])$. Consequently, $X \in D(A_0)$ is implied by non-degeneracy of $\mathcal M_0$. Thus, $D([A]) \subseteq D(A_0)$. 

Assume $X \in D(A_0)$. Then there is $Y\in \mathcal{M}_0$ such that $X = \mathcal T_0(f_z)Y$, where $f_z$ is given by \eqref{fzf} for some $z\in \mathbb {C}$ with $\Im z < 0$. To finish the proof of the theorem it suffices to show that for any $x \in \mathcal X_{comp}$ we have $(AX-XA)x = (Y+zX)x.$ We remark that for such $x$ one has $Xx\in D(A)$ since $X\in (\mathcal M_0)_c$ and the  operator $A$ is closed. Let $f\in L_1$ be an arbitrary function with $\supp{\widehat{f}}$ compact and $y\in \mathcal X_{comp}$ be such that $x = \mathcal T(f_z)y$; such $y$ exists because $\mathcal X_{comp} \subseteq D(A)$. Due to non-degeneracy of the module $\mathcal{X}$ we only need to show that
\[
\mathcal T(f)(AX-XA)\mathcal T(f_z)y =\mathcal T(f)(Y+zX)\mathcal T(f_z)y.
\]
However, since $[A]$ and $A_0$ agree on $\mathcal T_0(f_z)(\mathcal{M}_0)_c$ and $\mathcal T(f)X\mathcal T(f_z)\in (\mathcal{M}_0)_c$, we do, indeed, have
\[
\begin{split}
    \mathcal T(f)(AX-XA)\mathcal T(f_z)y & = (A\mathcal T(f)X\mathcal T(f_z)-\mathcal T(f)X\mathcal T(f_z)A)y \\
    &= (A\mathcal T(f)(\mathcal T_0(f_z)Y)\mathcal T(f_z)-\mathcal T(f)(\mathcal T_0(f_z)Y)\mathcal T(f_z)A)y \\ &= (A\mathcal T_0(f_z)(\mathcal T(f)Y\mathcal T(f_z))-\mathcal T_0(f_z)(\mathcal T(f)Y\mathcal T(f_z))A)y \\
    &= (\mathcal T(f)Y\mathcal T(f_z)+z\mathcal T_0(f_z)(\mathcal T(f)Y\mathcal T(f_z)))y = \mathcal T(f)(Y+zX)\mathcal T(f_z)y.
\end{split}
\]
Thus, $AX-XA$ extends to the operator $Y+zX \in \mathcal M_0$. Uniqueness of the extension follows since operators in $\mathcal{M}_0$ are completely determined by their values on $\mathcal X_{comp}$.
\end{proof}

The following lemma lies at the heart of our construction of admissible triples in the method of similar operators.

\begin{lemma}[\cite{BKU20}]\label{basklh2.1}
Assume that functions $\varphi, \psi\in L_1\cap L_2$ are such that $\widehat{\varphi}=1$ in some neighborhood of $0$ and
\begin{equation}\label{bask2.1}
\widehat{\psi}(\lambda)=(1-\widehat{\varphi}(\lambda))/\lambda, \quad \lambda\in\mathbb{R}\setminus\{0\},
\quad \widehat{\psi}(0)=0.
\end{equation}
Then for operators $\mathcal T_0(\varphi)X$ and $\mathcal T_0(\psi)X$, $X\in\mathcal{M}_0$, one has
$$
A(\mathcal T_0(\psi)X)x-(\mathcal T_0(\psi)X)Ax=Xx-(\mathcal T_0(\varphi)X)x, \quad x\in D(A),
$$
or, equivalently,
$$
[A](\mathcal T_0(\psi)X) = X-\mathcal T_0(\varphi)X\in\mathcal M_0.
$$
\end{lemma}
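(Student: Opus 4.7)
The plan is to invoke Lemma \ref{equigen} in the Banach module $(\mathcal{M}_0, \mathcal{T}_0)$, whose generator was just identified with the commutator $[A]$ in the preceding theorem. With $Y := X - \mathcal{T}_0(\varphi)X \in \mathcal{M}_0$ as the candidate image, the goal is to establish the operator identity
\[
\mathcal{T}_0(t)\bigl(\mathcal{T}_0(\psi)X\bigr) - \mathcal{T}_0(\psi)X \;=\; \mathcal{T}_0(\chi_t)\,Y, \quad t > 0,
\]
since Lemma \ref{equigen} then yields $\mathcal{T}_0(\psi)X \in D([A])$ together with $[A]\bigl(\mathcal{T}_0(\psi)X\bigr) = X - \mathcal{T}_0(\varphi)X$, which is the second (equivalent) form of the conclusion.

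The first step is to reduce this operator identity to the $L_1(\mathbb R)$ identity
\[
S(t)\psi - \psi \;=\; \chi_t - \chi_t * \varphi, \quad t > 0.
\]
Using Definition \ref{baskdef2.4} for the module $(\mathcal{M}_0, \mathcal{T}_0)$, one has $\mathcal{T}_0(t)\mathcal{T}_0(\psi) = \mathcal{T}_0(S(t)\psi)$ and $\mathcal{T}_0(\chi_t)\mathcal{T}_0(\varphi) = \mathcal{T}_0(\chi_t * \varphi)$, so the operator identity follows once the scalar one is in hand. All four functions lie in $L_1 \cap L_2$ (the function $\psi$ thanks to the hypothesis that $\widehat\varphi = 1$ near $0$, which forces $(1-\widehat\varphi)/\lambda$ to extend smoothly across $0$), so it suffices to compare Fourier transforms. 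Direct computation gives $\widehat{S(t)\psi}(\lambda) = e^{i\lambda t}\widehat\psi(\lambda)$ and $\widehat{\chi_t}(\lambda) = (e^{i\lambda t}-1)/\lambda$ (with the obvious value at $\lambda = 0$), so the claimed equality becomes
\[
(e^{i\lambda t}-1)\,\widehat\psi(\lambda) \;=\; \tfrac{e^{i\lambda t}-1}{\lambda}\,(1-\widehat\varphi(\lambda)), \quad \lambda\in\mathbb R,
\]
which is exactly the defining relation \eqref{bask2.1} multiplied by $e^{i\lambda t}-1$.

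Having established $[A](\mathcal{T}_0(\psi)X) = X - \mathcal{T}_0(\varphi)X$ in the module sense, the pointwise identity on $D(A)$ is then read off from the definition of $[A]$ given in the previous theorem as the unique extension of the commutator $AX - XA$ to an element of $\mathcal{M}_0$. The only subtlety to watch is the value at $\lambda = 0$ in the scalar Fourier identity, where both sides must vanish; this is guaranteed by the assumption $\widehat\psi(0)=0$ together with $\widehat\varphi \equiv 1$ near $0$. This bookkeeping at $\lambda=0$ is the main (and fortunately routine) obstacle; everything else is a mechanical application of the Fourier-side machinery already developed in Section \ref{basksec2}.
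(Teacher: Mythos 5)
The paper itself gives no proof of this lemma---it is imported from \cite{BKU20}---so there is no in-paper argument to compare against; I can only assess your proposal on its own terms. The core of your argument is sound and, in my view, the right way to organize the proof inside this paper: reducing the module identity to the scalar $L_1$ identity $S(t)\psi-\psi=\chi_t-\chi_t*\varphi$ is correct, the Fourier computation $\widehat{\chi_t}(\lambda)=(e^{i\lambda t}-1)/\lambda$ checks out (including at $\lambda=0$, where $\widehat{\chi_t}(0)=it$ is harmless because $1-\widehat\varphi(0)=0$), and Lemma \ref{equigen} applied in $(\mathcal M_0,\mathcal T_0)$ then legitimately yields $\mathcal T_0(\psi)X\in D([A])$ and $[A](\mathcal T_0(\psi)X)=X-\mathcal T_0(\varphi)X$, i.e.\ the second form of the conclusion.

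The gap is in your last step, and you have mislocated the difficulty: the $\lambda=0$ bookkeeping is trivial, whereas passing from the module identity to the displayed identity for \emph{all} $x\in D(A)$ is not. Writing $G=\mathcal T_0(\psi)X$, the statement $A G x-G A x=Xx-(\mathcal T_0(\varphi)X)x$ for every $x\in D(A)$ presupposes $G\bigl(D(A)\bigr)\subseteq D(A)$; this inclusion is genuine content of the lemma (it is precisely what delivers Condition 3 of Definition \ref{baskdef3.3} in Theorem \ref{baskth4.1}) and it does \emph{not} follow from the definition of $D([A])$ in the preceding theorem. That definition only guarantees that $AG-GA$, defined on $\{x\in D(A): Gx\in D(A)\}$ (and in the theorem's proof the identity is actually verified only on $\mathcal X_{comp}$), extends to an element of $\mathcal M_0$; it says nothing about the extension being realized by the honest commutator on all of $D(A)$. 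To close this you need an additional argument, e.g.: for $x\in D(A)$ and $Y=[A]G$, combine $\mathcal T_0(t)G-G=\mathcal T_0(\chi_t)Y$ with $\mathcal T(t)x-x=\mathcal T(\chi_t)Ax$ and a product-rule computation for $\tau\mapsto(\mathcal T_0(\tau)G)(\mathcal T(\tau)x)=\mathcal T(\tau)(Gx)$ to show $\mathcal T(t)(Gx)-Gx=\mathcal T(\chi_t)(GAx+Yx)$, whence Lemma \ref{equigen} in $(\mathcal X,\mathcal T)$ gives $Gx\in D(A)$ and $AGx=GAx+Yx$. Even this sketch requires some care in the non--strongly-continuous setting (justifying the differentiation, or replacing it by the corresponding integral identity), which is presumably why the paper defers to \cite{BKU20}. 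With that step supplied, your proof would be complete.
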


In our constructions, we will use specific functions in place of $\varphi$ and $\psi$. They are defined as follows.

For $a>0$, we let $\tau_a$ be the ``trapezoid'' function
$$
\tau_a(\lambda)=
\begin{cases}
1, \quad &|\lambda|\leqslant a, \\
\frac{1}{a}(2a-|\lambda|), \quad &a<|\lambda|\leqslant 2a,\\
0, \quad &|\lambda|>2a.
\end{cases}
$$
One easily verifies that $\tau_a\in L_2(\mathbb{R})$ and $\|\tau_a\|_2\leqslant 2\sqrt{2a/3}$,
$\|\tau_a'\|_2=\sqrt{2/a}$. Moreover, $\tau_a=\widehat{\varphi}_a$, where
\begin{equation}\label{otrap}
\varphi_a(t)=\frac{2\sin\frac{3at}{2}\sin\frac{at}{2}}{\pi at^2}.
\end{equation}
Functions $\tau_a$ have been widely used for other purposes, see, e.g., \cite{Le53}. Note that the net $(\varphi_a)$, $a\in\mathbb R_+$, $a\to \infty$, forms a b.a.i.~in $L_1(\mathbb{R})$.

Furthermore, consider the functions $\omega_a$, $a>0$, defined by
$$
\omega_a(\lambda)=(1-\tau_a(\lambda))/\lambda=
\begin{cases}
0, \quad &|\lambda|\leqslant a, \\
-\frac{1}{a}-\frac{1}{\lambda}, \quad &-2a\leqslant \lambda< -a,\\
\frac{1}{a}-\frac{1}{\lambda}, \quad &a<\lambda\leqslant 2a,\\
\frac{1}{\lambda}, \quad &|\lambda|>2a.
\end{cases}
$$
It is easy to see that $\|\omega_a\|_2=\sqrt{(4-4\ln 2)/a}\leqslant (1.11)/\sqrt{a}$ and
$\|\omega_a'\|_2=\sqrt{2/(3a^3)}\leqslant (0.82)/(a\sqrt{a})$. We then define the functions $\psi_a$ by letting 
\begin{equation}\label{psia}
    \widehat{\psi}_a=\omega_a,\quad a>0.
\end{equation}

The following lemma, which was proved for example in \cite{BKU20}, shows that the functions $\psi_a$ indeed belong to $L_1$ and allows one to obtain estimates for
  $\|\varphi_a\|_1$ and $\|\psi_a\|_1$.  
  
\begin{lemma}\label{basklh2.2}
Let $f\in L_2$ be absolutely continuous and such that $f'\in L_2$. Then one has $f\in L_1$ and $\|f\|_1^2\leqslant 2\|\widehat{f}\|_2\|\widehat{f}'\|_2$.
\end{lemma}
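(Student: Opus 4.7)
My plan is to carry out the classical split-and-Cauchy--Schwarz estimate for $\|f\|_1$ and then translate to Fourier-side norms via Plancherel. The hypotheses give $f, tf \in L_2(\mathbb R)$ through the distributional identity $\widehat{t f}(\lambda) = i(\widehat f)'(\lambda)$ and Plancherel's theorem, so that
\[
\|f\|_2 = (2\pi)^{-1/2}\|\widehat f\|_2 \qquad\text{and}\qquad \|tf\|_2 = (2\pi)^{-1/2}\|\widehat f'\|_2.
\]
Hence it suffices to prove the intermediate bound $\|f\|_1^2 \le 8\,\|f\|_2\,\|tf\|_2$ and substitute.

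For the intermediate bound, I would fix an auxiliary parameter $R > 0$ and split
\[
\|f\|_1 = \int_{|t|\le R}|f(t)|\,dt + \int_{|t|>R}|f(t)|\,dt.
\]
A direct application of Cauchy--Schwarz to the first integral produces $\sqrt{2R}\,\|f\|_2$. On the tail, writing $|f|=|t|^{-1}\cdot|tf|$ and applying Cauchy--Schwarz (noting $\int_{|t|>R}t^{-2}\,dt=2/R$) yields $\sqrt{2/R}\,\|tf\|_2$. Optimizing $\|f\|_1\le\sqrt{2R}\,\|f\|_2+\sqrt{2/R}\,\|tf\|_2$ in $R$ — the minimum sits at $R=\|tf\|_2/\|f\|_2$ by AM--GM — delivers $\|f\|_1\le 2\sqrt{2}\,(\|f\|_2\|tf\|_2)^{1/2}$, and squaring gives the claim.

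Substituting the two Plancherel identifications into the intermediate inequality converts it to
\[
\|f\|_1^2 \le \frac{4}{\pi}\,\|\widehat f\|_2\,\|\widehat f'\|_2 \le 2\,\|\widehat f\|_2\,\|\widehat f'\|_2,
\]
the last step being the cheap observation that $4/\pi < 2$. Finiteness of the right-hand side then certifies $f\in L_1$. The only step that really warrants care is the Plancherel identity identifying $\|tf\|_2$ with $(2\pi)^{-1/2}\|\widehat f'\|_2$ under the stated regularity of $\widehat f$; this is routine via approximation of $f$ by compactly supported smooth functions and passage to the $L_2$-limit, and everything else is elementary. The slack between $4/\pi$ and $2$ in the final step is what makes the stated constant trivial to reach despite not using the sharp (Carlson) form of the tail estimate.
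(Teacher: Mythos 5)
Your proof is correct. The paper itself does not prove Lemma \ref{basklh2.2} --- it defers to \cite{BKU20} --- so there is no in-paper argument to compare against; but the route you take (split the line at $|t|=R$, Cauchy--Schwarz on each piece, optimize $R$ by AM--GM, then pass to Fourier-side norms by Plancherel) is the classical Carlson-type estimate, and it yields $\|f\|_1^2\le 8\|f\|_2\|tf\|_2=\tfrac{4}{\pi}\|\widehat f\|_2\|(\widehat f)'\|_2$, comfortably within the stated constant $2$. The one point worth making explicit is the reading of the hypotheses, which you resolved correctly but somewhat silently: taken literally (``$f$ absolutely continuous, $f'\in L_2$'') the lemma would assert $H^1(\mathbb R)\subset L_1(\mathbb R)$, which is false (take $f(t)=(1+t^2)^{-1/2}$). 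The hypothesis has to be understood as regularity of $\widehat f$ --- i.e.\ $\widehat f$ absolutely continuous with $(\widehat f)'\in L_2$, equivalently $tf\in L_2$ --- and $\widehat f'$ in the conclusion means $(\widehat f)'$. That is exactly how the lemma is used in the paper, namely on $\varphi_a$ and $\psi_a$ via the norms $\|\tau_a\|_2$, $\|\tau_a'\|_2$ and $\|\omega_a\|_2$, $\|\omega_a'\|_2$ of their Fourier transforms, and it is the reading your proof implements (``the stated regularity of $\widehat f$''); so no gap results, but you should state the assumption $tf\in L_2$ up front rather than claim it follows ``from the hypotheses.'' The remaining steps --- the identity $\widehat{tf}=i(\widehat f)'$, Plancherel in the paper's normalization $\|\widehat f\|_2=\sqrt{2\pi}\|f\|_2$, and the optimization at $R=\|tf\|_2/\|f\|_2$ --- all check out.
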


 Lemma \ref{basklh2.2} yields the estimates $\|\varphi_a\|_1\leqslant 2^{\frac{3}{2}}3^{-\frac{1}{4}}$ and
\begin{equation}\label{psiest}
\|\psi_a\|_1\leqslant \frac{2}{a}\sqrt[4]{2(1-\ln 2)/3}\leqslant 1.35/a.    
\end{equation}
Note that for functions $\varphi_a$
more precise estimates can be obtained: $\|\varphi_a\|_1\leqslant 4/\pi+(2\ln 3)/\pi$ (see~\cite{Le53}) and
$\|\varphi_a\|_1\leqslant \sqrt{3}$ (see~\cite{RS00}).

We conclude this section with a lemma which, in some way, complements Corollary \ref{cnormest1} and will be used in the proof of one of the main results in Section \ref{basksec4}. The lemma can be deduced from \cite[Corollary 1]{BS09} but we will provide a proof for completeness.

\begin{lemma}\label{hcest}
    Assume that for some $b\ge a>0$ the function $\psi_a$ is given by \eqref{psia} and a vector $x\in (\mathcal{X},\mathcal{T})$ satisfies either $\Lambda(x) \subseteq [2b,\infty)$  or  $\Lambda(x) \subseteq (-\infty, -2b]$. Then $\mathcal T(\psi_a)x = \mathcal T(\psi_b)x$  
    and $\|\mathcal T(\psi_a)x\| \le \frac{\|x\|}{2b}$.
\end{lemma}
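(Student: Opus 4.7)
The plan is to prove the two assertions in order, focusing on $\Lambda(x) \subseteq [2b,\infty)$ since the other case is identical by symmetry, with the lower half-plane replacing the upper one. For the equality $\mathcal T(\psi_a)x = \mathcal T(\psi_b)x$, I would compute $\supp(\omega_a - \omega_b)$ from the explicit formulas: since $b \ge a$, both functions equal $1/\lambda$ on $\{|\lambda| \ge 2b\}$ and both vanish on $[-a,a]$, so $\supp(\omega_a - \omega_b) \subseteq [-2b,-a]\cup[a,2b]$. Intersecting with $[2b,\infty)$ leaves at most the single point $\{2b\}$, and one checks directly that $\omega_a(2b) = \omega_b(2b) = 1/(2b)$. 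Property $5'$ of Lemma \ref{basklh2.1'new} then forces $\mathcal T(\psi_a - \psi_b)x = 0$.

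For the norm estimate, the crucial analytic input is the exponential decay of the upper half-plane extension of $\tau_x$:
\[
\|\mathcal T(i\alpha)x\| \le e^{-2b\alpha}\|x\|, \qquad \alpha \ge 0.
\]
The existence of a bounded continuous extension to the closed upper half-plane (holomorphic inside) is ensured by the remark after Corollary \ref{cnormest1}, since $\Lambda(x)\subseteq [0,\infty)$. To upgrade boundedness into exponential decay I would consider the modulated isometric representation $\tilde{\mathcal T}(t) = e^{-2ibt}\mathcal T(t)$, whose generator is $A - 2bI$ and for which $\Lambda(x,\tilde{\mathcal T}) = \Lambda(x,\mathcal T)-2b \subseteq [0,\infty)$. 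Applying the same remark to $\tilde{\mathcal T}$ shows that $G(z) = e^{-2ibz}\mathcal T(z)x$ is bounded on the closed upper half-plane, and because $\|G(t)\| = \|x\|$ on the real axis by isometry of $\tilde{\mathcal T}$, a vector-valued Phragm\'en-Lindel\"of argument yields $\|G(z)\| \le \|x\|$ throughout; rearranging produces the decay bound.

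With this decay in hand, I would identify $y = \mathcal T(\psi_b)x$ with the Laplace-type integral $\int_0^\infty \mathcal T(i\alpha)x\,d\alpha$. The Fourier relation $\lambda\omega_b(\lambda) = 1 - \widehat{\varphi_b}(\lambda)$, together with $\mathcal T(\varphi_b)x = 0$ (another application of property $5'$, now using $\widehat{\varphi_b}(2b)=0$), gives $Ay = x$, so the holomorphic extension $F_y(z) = \mathcal T(z)y$ satisfies the ODE $F_y'(z) = iF_x(z)$ on the upper half-plane. Integrating along the imaginary axis from $0$ to $i\alpha$ yields $F_y(i\alpha) = y - \int_0^\alpha \mathcal T(i\beta)x\,d\beta$, and $\|F_y(i\alpha)\|\le e^{-2b\alpha}\|y\|\to 0$ as $\alpha\to\infty$ produces the integral representation; the norm bound $\|y\|\le \int_0^\infty e^{-2b\alpha}\|x\|\,d\alpha = \|x\|/(2b)$ is then immediate. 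The principal obstacle is the Phragm\'en-Lindel\"of step, since Corollary \ref{cnormest1} and Theorem \ref{smt} furnish the quantitative exponential decay only for submodules with compact Beurling spectrum.
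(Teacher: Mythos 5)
Your argument for the equality $\mathcal T(\psi_a)x=\mathcal T(\psi_b)x$ is the same as the paper's (property $5'$ of Lemma \ref{basklh2.1'new} applied to $\psi_a-\psi_b$), but your proof of the norm bound takes a genuinely different route. The paper's proof is a short kernel-replacement trick: since $\omega_a(\lambda)=1/\lambda=(|\lambda-2b|+2b)^{-1}$ on $[2b,\infty)$, property $5'$ gives $\mathcal T(\psi_a)x=\mathcal T(\widetilde\psi_b)x$ for the auxiliary kernel $\widehat{\widetilde\psi_b}(\lambda)=(|\lambda-2b|+2b)^{-1}$, and P\'olya's positive-definiteness criterion shows that $\|\widetilde\psi_b\|_1=\widehat{\widetilde\psi_b}(2b)\cdot 1=\tfrac1{2b}$ exactly, so the bound is just $\|\widetilde\psi_b\|_1\|x\|$. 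You instead prove the one-sided exponential decay $\|\mathcal T(i\alpha)x\|\le e^{-2b\alpha}\|x\|$ by a Phragm\'en--Lindel\"of argument for the modulated representation and then realize $\mathcal T(\psi_b)x$ as the Laplace integral $\int_0^\infty \mathcal T(i\alpha)x\,d\alpha$, using $\mathcal T(\varphi_b)x=0$ to get $A\,\mathcal T(\psi_b)x=x$. Your route is workable and arguably more illuminating (it explains \emph{why} the constant is $\tfrac1{2b}$: it is $\int_0^\infty e^{-2b\alpha}d\alpha$), but it is longer and needs two pieces of care that the paper's proof avoids. First, the holomorphic-extension remark after Corollary \ref{cnormest1} applies to $x\in\mathcal X_c$, whereas the lemma allows arbitrary $x\in\mathcal X$; you should reduce to $\mathcal X_c$ by replacing $x$ with $e_\alpha x$ for a b.a.i.\ of $L_1$-norm $1$ (e.g.\ Fej\'er or Poisson kernels) and passing to the limit, noting $\Lambda(e_\alpha x)\subseteq\Lambda(x)$. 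Second, the Phragm\'en--Lindel\"of step and the identity $F_y'=iF_x$ on the open half-plane must be justified for Banach-space-valued functions; both reduce to the scalar case by composing with functionals of norm at most one, and the latter is cleanest if you compare $F_y$ with the primitive $y+i\int_{[0,z]}F_x$ and use that a function holomorphic in the open half-plane, continuous up to $\mathbb R$, and vanishing on $\mathbb R$ is identically zero. With these points filled in, your proof is correct; the paper's is the more economical of the two, at the price of pulling $\|\widetilde\psi_b\|_1=\tfrac1{2b}$ out of a hat.
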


\begin{proof}
    The equality $\mathcal T(\psi_a)x - \mathcal T(\psi_b)x = \mathcal T(\psi_a-\psi_b)x = 0$ follows immediately from Lemma \ref{basklh2.1'new}($5^\prime$) and we only need to obtain the estimate for the norm. If we use \eqref{psiest} with $a= b$, we will only get the estimate $\|\mathcal T(\psi_a)x\| \le \frac{1.35\|x\|}{b}$. Thus, we need a more elaborate argument, which we borrow from the proof of \cite[Theorem 1]{BS09}. We consider a family of functions $\widetilde \psi_b$ defined via their Fourier transform by
    \begin{equation}\label{ptf}
       \widehat{\widetilde \psi_b}(\lambda) = (|\lambda-2b|+2b)^{-1}, \quad b>0. 
    \end{equation}
    Using Lemma \ref{basklh2.1'new}($5^\prime$) once again, we see that $\mathcal T(\psi_a)x = \mathcal T(\widetilde\psi_b)x$.
    Thus, the postulated estimate would follow if we can show that $\|\widetilde\psi_b\|_1 = \frac1{2b}$. The latter assertion, however, is an immediate consequence of P\'olya's criterion for positive definiteness of functions (see \cite[Theorem 4.3.1]{L70}). Indeed, the function $g = 2b\widehat{\widetilde \psi_b}(\cdot+2b)$ is even, convex on $\mathbb R_+$, and satisfies $g(0) = 1$ together with $\lim\limits_{\lambda\to\infty}g(\lambda) = 0$. Thus, by P\'olya's criterion, $g$ is the Fourier transform of a non-negative function yielding $\|\widetilde\psi_b\|_1 = \frac1{2b}g(0) = \frac{1}{2b}$ as desired.
\end{proof}

\section{$A$-harmonious spaces}\label{HarmSp}

In this section, we define the notion of submodules that are harmonious to the module generator $A$. These modules (under a few extra assumptions) will be used as the spaces of admissible perturbations when we construct admissible triples of the method similar operators.

\begin{definition}
Let $A: D(A)\subset\mathcal{X}\to\mathcal{X}$ be a closed linear operator. A linear operator 
$X: D(X)\subset\mathcal{X}\to\mathcal{X}$ is called (relatively) $A$-bounded if
$D(A)\subseteq D(X)$ and $\|X\|_A=\inf\{C>0: \|Xx\|\leqslant C(\|x\|+\|Ax\|), x\in D(A)\}<\infty$.
\end{definition}

The space $\mathfrak{L}_A(\mathcal{X})$ of all $A$-bounded linear operators (with the domain $D(A)$)
is a Banach space with the norm $\|X\|_A$. Moreover,
$\mathrm{End}\,\mathcal{X}\subset\mathfrak{L}_A(\mathcal{X})$,
and $\mathfrak{L}_A(\mathcal{X})$ has equivalent norms defined by $\|X\|_\lambda=\|X(A-\lambda I)^{-1}\|$,
$\lambda\in\rho(A)$.

\begin{definition}\label{baskdef16}
Let $A: D(A)\subset\mathcal{X}\to\mathcal{X}$ be the generator of a Banach
$L_1(\mathbb{R})$-module $(\mathcal{X}, \mathcal{T})$ and $\mathcal{M}$ be a linear subspace of $\mathfrak{L}_A(\mathcal{X})$. A space $\mathcal{M}$ is called 
{{harmonious to  $A$}} (or $A$-harmonious), if the following properties hold.
\begin{enumerate}
    \item $\mathcal{M}$ is continuously and injectively embedded in $\mathfrak{L}_A(\mathcal{X})$.
    \item Formula \eqref{T0} is valid for any  $B\in\mathcal{M}$ and defines a bounded representation of the group $\mathbb{R}$ by operators from $\mathrm{End}\, \mathcal{M}$; we retain te symbol $\mathcal T_0$  for this representation, which can be considered isometric in view of \ref{ogriz}.
    \item The space $\mathcal{M}$ is a non-degenerate Banach $L_1(\mathbb{R})$-module with the structure associated with the representation  $\mathcal T_0$.
\end{enumerate}
\end{definition}

The space $\mathcal{M}_0$ introduced ahead of Lemma \ref{basknewlh2.2} is the main example of an $A$-harmonious space.  In fact, all harmonious spaces we consider are continuously and injectively embedded into   $\mathcal{M}_0$. We will use the following spaces and operators to introduce other examples of harmonious spaces.

\begin{example}\label{exa4}
We shall denote by $V$ the multiplication operator $(Vx)(t) = v(t)x(t)$, $v\in L_\infty(\mathbb R)$, $t\in\mathbb R$, in the Banach module $(\mathcal{X, T}) = (L_2, S)$ of Example \ref{exbsp}. In the same module, we will also consider the operators
 $D_a$, $a\in\mathbb R\setminus \{0\}$,  defined by $(D_a x)(t) = x(at)$, $t\in\mathbb R$, $x\in L_2$. For various $a\in\mathbb R\setminus \{0\}$ these operators are contractions, dilations, or involutions.
Formula \eqref{T0} yields the following relations for these operators:
\begin{equation}\label{T0mult}
((\mathcal T_0(s)V)x)(t) = v(s+t)x(t), \ s,t\in \mathbb R, x\in\mathcal X;
\end{equation}
\begin{equation}\label{T0dial}
((\mathcal T_0(s)D_a)x)(t) = x(at+(a-1)s), \ s,t\in \mathbb R, x\in\mathcal X;
\end{equation}
\begin{equation}\label{T0inv}
((\mathcal T_0(s)VD_{-1})x)(t) = v(s+t)x(-t-2s), \ s,t\in \mathbb R, x\in\mathcal X.
\end{equation}
Below, for example in Section \ref{basksec5}, we may work with a more general setting where the multiplier function $v$ is operator-valued: $v\in L_\infty(\mathbb R, \mathrm{End}\, \mathcal X)$. The above formulas  remain valid in that setting as well.
\end{example}

\begin{example}\label{exa5}
Let the Banach space $\mathcal X = \ell_2(\mathbb Z)$, be equipped with a Banach  $L_1(\mathbb R)$-module structure that is associated with the representation   $\mathcal T = M$ define analogously to \eqref{basknew1'}:
\begin{equation}\label{modl}
M(t)x(n) = e^{int}x(n), \ x\in \ell_2(\mathbb Z),\ n\in\mathbb Z, \ t\in \mathbb R.
\end{equation}
Operators from $\mathrm{End}\, \ell_2(\mathbb Z)$ are conveniently defined by their matrices in the standard orthonormal basis  $\{e_n,\ n\in\mathbb Z\}$ of $\ell_2(\mathbb Z)$. From Definition \ref{baskdef2.8}, it follows that the generator of the module $(\mathcal X, M)$ is defined by the diagonal matrix
\[
A \sim {\mathrm{diag}}(\mathbb Z) = \left(
\begin{array}{ccccc}
 \ddots & \cdots &\vdots &\cdots & \\
 \cdots & -1 &0 &0 & \cdots  \\
 \cdots& 0 &0 & 0 & \cdots \\
 \cdots& 0 &0 & 1 & \cdots \\
 \cdots & \cdots &\vdots &\cdots & \ddots  \\
\end{array}
\right).
\]
By $L$ we shall denote operators defined by a Laurent matrix \cite{BS99}, i.~e., operators satisfying
\[
(Lx)(n) = (a*x)(n) = \sum_{m\in\mathbb Z} a(n-m)x(m), \ a\in\ell_\infty(\mathbb Z), \ x\in\ell_1(\mathbb Z)\subset \ell_2(\mathbb Z),\ n\in\mathbb Z.
\]
By $J_k$, $k\in\mathbb Z\setminus\{0\}$, we shall denote operators $(J_k x)(n) = x(kn)$, $n\in\mathbb Z$, $x\in\ell_2$. Finally, by $H$, we shall denote operators defined by Hankel matrices, i.~e., those that satisfy $H = J_{-1}L$ for some Laurent operator $L$.

Symbols $X_n$ or $(X)_n$ will refer to the operator in  $\mathrm{End}\, \mathcal X$, defined by the $n$-th diagonal of the matrix of the operator $X \in \mathrm{End}\, \mathcal X$.
With this notation, formula \eqref{T0} becomes
\begin{equation}\label{T0mod}
(\mathcal T_0(t)X)x = \sum_{n\in\mathbb Z} e^{int}X_n x, \ t\in \mathbb R, x\in\mathcal X.
\end{equation}
\end{example}

To define certain classes of $A$-harmonious spaces we shall need the notion of a weight.

\begin{definition}[\cite{BK18c}]\label{baskdef3.1}
A weight is an even measurable function  $\nu: \mathbb R \to \mathbb R$ such that $\nu(t)\ge 1$for all $t\in\mathbb R$. A bi-weight is a measurable function  $\omega: \mathbb R^2 \to \mathbb R$ such that
$\omega(s,t)=\omega(t,s)\ge 1$, $s,t\in \mathbb R$,  and $\sup_{s\in \mathbb R}\omega(s,s+t)<\infty$, $t\in \mathbb R$. For a bi-weight $\omega$, the function $\nu_\omega: \mathbb R \to \mathbb R$, defined by $\nu(t) = \sup_{s\in \mathbb R}\omega(s,s+t)$, $t\in \mathbb R$, is a weight which we call a majorante of the bi-weight $\omega$.
\end{definition}

We shall make use of the following classes of weights  (see also \cite{B90, G07col}).

\begin{definition}[\cite{BK18c}]\label{baskdef3.2}
A weight $\nu: \mathbb R \to \mathbb R$ is
\begin{itemize}
\item {submultiplicative}, if $\nu(s+t) \le \nu(s)\nu(t)$, $s, t\in\mathbb R$;
\item {subconvolutive},  if $\nu^{-1}\in L_1({\mathbb R})$ and $\nu^{-1} * \nu^{-1} \le C\nu^{-1}$ (pointwise) for some $C >0$;
\item {quasi-subconvolutive},  if
$u^{-1} = ((1+|\cdot|)\nu)^{-1}\in L^1({\mathbb R})$, and
\begin{equation}\label{qsc}
\int_{|t|\ge \alpha}\frac{\nu(s)}{u(t)\nu(s-t)}dt \to 0
\end{equation}
as $\alpha \to \infty$ uniformly in $s\in{\mathbb R}$;
\item {balanced}, if there exist $a,b\in(0,\infty)$, such that
\[
a\le \inf_{t\in[0,1]}\frac{\nu(s+t)}{\nu(s)} \le \sup_{t\in[0,1]}\frac{\nu(s+t)}{\nu(s)} \le b, \ s\in{\mathbb R};
\]
\item {subexponential}, if for any $\gamma > 1$ there exists $M > 0$ such that $\nu(t) \le M\gamma^{|t|}$ for all $t\in{\mathbb R}$;
\item a {GRS-weight} \cite{GRS64}, if it is submultiplicative and  $\lim\limits_{n\to\infty} [\nu(nt)]^{1/n} = 1$.
\end{itemize}
Each of the above notions applies to a bi-weight $\omega$ if it applies to its majorante $\nu_\omega$.
\end{definition}

\begin{example}
A typical weight is given by
\begin{equation}\label{typw}
\nu(t) = e^{a|t|^b}(1+|t|)^p.
\end{equation}
Such a weight  $\nu$ is balanced if $0\le b\le 1$ and submultiplicative if, additionally, $a, p \ge 0$; under these conditions the weight $\nu$ is a GRS-weight if and only if $0\le b<1$; the latter condition also makes the weight $\nu$ subexponential. A weight $\nu$ in \eqref{typw} is subconvolutive if
 $a=0$ and $p > 1$. It is quasi-subconvolutive if $a=0$ and $p >0$. Observe that the weight $\nu(t) = 1+|t|$   is quasi-subconvolutive but not subconvolutive.
\end{example}

The rest of this section contains examples of various  $A$-harmonious spaces and other interesting classes of linear operators.

\subsection{$A$-harmonious spectral submodules}\label{spsub}

Let $\mathcal{M}$ be an $A$-harmonious space that is continuously and injectively embedded into  $\mathcal{M}_0$.
\begin{definition}[\cite{BK05}]\label{baskdef2.9}
The set $\Lambda(X, \mathcal T_0)\setminus\{0\}$ is called the memory of the operator $X\in\mathcal{M}$. If
$\Lambda(X, \mathcal T_0)\subseteq\{0\}$, the operator $X\in\mathcal{M}$ is called memoryless. An operator $X\in\mathcal{M}$
is called causal (anticausal), provided that $\Lambda(X, T_0)\subseteq [0, +\infty)$
$(\Lambda(X, T_0)\subseteq (-\infty, 0]))$. An operator  $X\in\mathcal{M}$
is called hypercausal (hyperanticausal), if $\Lambda(X, \mathcal T_0)\subset (0, +\infty)$
($\Lambda(X, \mathcal T_0)\subset (-\infty, 0)$, respectively). 
\end{definition}

The set of all memoryless operators will be denoted by $\mathscr M$. By $\mathscr C$, $\mathscr {A\!\!C}$, $\mathscr {H\!\!C}$, and $\mathscr {H\!\!A\!\!C}$ we shall denote the sets of all causal, anticausal, hypercausal, and hyperanticausal operators, respectively.
Additionally, we let $\mathscr{H\!\!C}_a = \{X\in\mathcal M: \Lambda(X, \mathcal T_0)\subseteq [a, +\infty)\}$ and
$\mathscr{H\!\!A\!\!C}_a = \{X\in\mathcal M: \Lambda(X, \mathcal T_0)\subseteq (-\infty, -a]\}$, $a >0$.

By definition, $\mathscr M$, $\mathscr C$, $\mathscr {A\!\!C}$, $\mathscr {H\!\!C}_a$ and $\mathscr {H\!\!A\!\!C}_a$ are closed spectral submodules of $\mathcal M$ (see Definition \ref{specmod}). Clearly, any spectral submodule is an $A$-harmonious space. Additionally, due to  Lemma \ref{basknewlh2.2}(2), the above classes of operators are Banach algebras.
The following two lemmas collect a few other useful properties of the  classes. The first of them follows directly from Lemma~\ref{basklh2.1new} and Definition \ref{baskdef2.8}.

\begin{lemma}\label{comlemma}
Given an operator $X\in\mathcal{M}$, the following properties are equivalent.
\begin{enumerate}
    \item $X$ is memoryless.
    \item $\mathcal T_0(t)X=X$ for any
$t\in\mathbb{R}$.
    \item $AXx = XAx$ for any $x\in D(A)$.
\end{enumerate}
\end{lemma}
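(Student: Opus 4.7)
My plan is to prove the equivalences in two steps: $(1)\Leftrightarrow(2)$ using the Beurling-spectral machinery, and $(2)\Leftrightarrow(3)$ using the resolvent characterization of the module generator from Definition \ref{baskdef2.8}.

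For $(2)\Rightarrow(1)$ I will use that $\mathcal T_0(t)X=X$ makes $X$ trivially $\mathcal T_0$-continuous, so formula \eqref{basknew2} gives $\mathcal T_0(f)X=\widehat f(0)X$ for every $f\in L_1$. For any $\lambda\neq 0$, choosing $f\in L_1$ with $\widehat f(0)=0$ and $\widehat f(\lambda)\neq 0$ then forces $fX=0$ with $\widehat f(\lambda)\neq 0$, so $\lambda\notin\Lambda(X,\mathcal T_0)$. For $(1)\Rightarrow(2)$, I would apply Lemma \ref{basklh2.1new}(6) to any $f\in L_1$ with $\widehat f=1$ in a neighborhood of $0$ to obtain $fX=X$, and then use Definition \ref{baskdef2.4} to rewrite $\mathcal T_0(t)X-X=(S(t)f-f)X$. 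Setting $g=S(t)f-f$, I observe that $\widehat g(0)=0$ and $(\operatorname{supp}\widehat g)\cap\Lambda(X,\mathcal T_0)\subseteq\{0\}$ is countable, so Lemma \ref{basklh2.1'new}($5'$) (applicable since $\mathcal T_0$ may be taken isometric per Remark \ref{ogriz}) yields $gX=0$.

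For $(2)\Leftrightarrow(3)$, my plan is to show that both conditions are equivalent to $XR(z)=R(z)X$ on $\mathcal X$ for every $z\in\rho(A)$, exploiting $R(z)=\mathcal T(f_z)$. The equivalence of (3) with this commutation is a standard algebraic manipulation: for $y\in\mathcal X$ and $x=R(z)y\in D(A)$, (3) gives $Xx\in D(A)$ and $(A-zI)Xx=X(A-zI)x=Xy$, hence $Xx=R(z)Xy$. For the equivalence of (2) with $XR(z)=R(z)X$, I would use the integral representation \eqref{resf}: for $x\in\mathcal X_{comp}\subseteq D(A)$ (where $Xx\in D(A)\subseteq\mathcal X_c$ under (3)), the identity $XR(z)x=R(z)Xx$ becomes
\[
\int_{-\infty}^0 e^{izt}\bigl(X\mathcal T(-t)-\mathcal T(-t)X\bigr)x\,dt=0\quad\text{for all }\Im z<0,
\]
which by uniqueness of the vector-valued Laplace transform is equivalent to $X\mathcal T(-t)x=\mathcal T(-t)Xx$ for all $t>0$; the case $t<0$ follows symmetrically via $\Im z>0$. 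Since $\mathcal T(t)X$ and $X\mathcal T(t)$ both lie in $\mathcal M_0$, and elements of $\mathcal M_0$ are determined by their values on $\mathcal X_{comp}$, this extends to $\mathcal T(t)X=X\mathcal T(t)$ on all of $\mathcal X$, i.e., $\mathcal T_0(t)X=X$.

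The main obstacle will be the $(2)\Leftrightarrow(3)$ step: bridging the algebraic commutation $AXx=XAx$ on $D(A)$ and the full operator identity $\mathcal T_0(t)X=X$ on all of $\mathcal X$ without presuming strong continuity of $\mathcal T$ requires combining three tools in a coordinated way, namely the resolvent representation of the module generator from Definition \ref{baskdef2.8}, uniqueness of the Laplace transform for bounded continuous vector-valued integrands, and the characterization of elements of $\mathcal M_0$ via their action on vectors with compact Beurling spectrum.
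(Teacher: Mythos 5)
Your proof is correct and follows exactly the route the paper indicates: the paper gives no written argument for this lemma, saying only that it ``follows directly from Lemma~\ref{basklh2.1new} and Definition~\ref{baskdef2.8}'', and your two blocks are a faithful elaboration of precisely those two ingredients (Beurling-spectrum calculus for $(1)\Leftrightarrow(2)$, the resolvent characterization of the generator for $(2)\Leftrightarrow(3)$). The auxiliary facts you invoke along the way --- Lemma~\ref{basklh2.1'new}($5^\prime$) for the isometric case and the fact that operators in $\mathcal M_0$ are determined by their values on $\mathcal X_{comp}$ --- are both explicitly provided in Section~\ref{basksec2}, so nothing is missing.
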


In view of the second property in the above lemma, memoryless operators are eigenvectors of the Banach  $L_1(\mathbb{R})$-module
$(\mathcal{M}, \mathcal T_0)$, see \cite[Definition~4.5]{BK05}.

\begin{lemma}\label{causidem}
Assume $\mathcal M \in \{\mathscr {H\!\!C}_{a},  \mathscr {H\!\!A\!\!C}_{a}\}$ for some $a>0$ and the operator $I+Z$, $Z\in\mathcal M$, is an idempotent. Then $Z = 0$.
\end{lemma}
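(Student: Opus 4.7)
The plan is to exploit the idempotent relation $(I+Z)^2 = I+Z$, which simplifies to $Z^2 = -Z$, and then bootstrap the Beurling spectrum estimate of Lemma \ref{basknewlh2.2}(2) to push $\Lambda(Z,\mathcal T_0)$ arbitrarily far from the origin.

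First I would observe that expanding $(I+Z)^2 = I + 2Z + Z^2 = I+Z$ gives the key identity $Z = -Z^2$. Since scalar multiplication does not change the Beurling spectrum, this yields $\Lambda(Z,\mathcal T_0) = \Lambda(Z^2,\mathcal T_0)$.

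Next, assuming first that $\mathcal M = \mathscr{HC}_a$, so that $\Lambda(Z,\mathcal T_0) \subseteq [a,+\infty)$, I would apply Lemma \ref{basknewlh2.2}(2) to bound
\[
\Lambda(Z^2,\mathcal T_0) \subseteq \overline{\Lambda(Z,\mathcal T_0)+\Lambda(Z,\mathcal T_0)} \subseteq [2a,+\infty).
\]
Combined with $\Lambda(Z,\mathcal T_0) = \Lambda(Z^2,\mathcal T_0)$, this upgrades the inclusion to $\Lambda(Z,\mathcal T_0)\subseteq [2a,+\infty)$. Iterating the same argument $n$ times produces $\Lambda(Z,\mathcal T_0)\subseteq [2^n a,+\infty)$ for every $n\in\mathbb N$, and hence $\Lambda(Z,\mathcal T_0)=\emptyset$. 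By Lemma \ref{basklh2.1new}(2), $Z=0$. The case $\mathcal M = \mathscr{HAC}_a$ is identical with intervals $(-\infty,-2^n a]$ in place of $[2^n a,+\infty)$.

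I do not anticipate a genuine obstacle in this argument; the only subtle point is the verification that $\Lambda(Z,\mathcal T_0)=\Lambda(-Z^2,\mathcal T_0)=\Lambda(Z^2,\mathcal T_0)$, which is immediate from Lemma \ref{basklh2.1new}(3) (with $A=-I$, $B=0$) or directly from the definition, together with the doubling estimate from Lemma \ref{basknewlh2.2}(2) being the right tool to convert the self-reproducing identity $Z=-Z^2$ into an unbounded shift of the spectrum.
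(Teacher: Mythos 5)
Your proof is correct, and it rests on the same two ingredients as the paper's argument: the observation that idempotency of $I+Z$ is equivalent to $Z+Z^2=0$, and the estimate $\Lambda(Z^2,\mathcal T_0)\subseteq\overline{\Lambda(Z,\mathcal T_0)+\Lambda(Z,\mathcal T_0)}$ from Lemma \ref{basknewlh2.2}(2). The only difference is the concluding step: you iterate the doubling $\Lambda(Z,\mathcal T_0)=\Lambda(Z^2,\mathcal T_0)\subseteq[2^n a,\infty)$ to force $\Lambda(Z,\mathcal T_0)=\emptyset$, whereas the paper argues by contraposition in one shot, setting $b=\inf\Lambda(Z,\mathcal T_0)$ and testing against a single $f\in L_1$ with $\widehat f(b)\neq 0$ and $\supp\widehat f\cap[2b,\infty)=\emptyset$ to see that $Z+Z^2\neq 0$. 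Both routes are valid and of essentially the same depth; yours avoids choosing a test function at the cost of an (elementary) induction.
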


\begin{proof}
Without loss of generality, we assume $\mathcal M = \mathscr {H\!\!C}_{a}$.

Assume for contradiction that $Z\neq 0$. Let $b = \inf \Lambda (Z, \mathcal T_0)$ so that $b\ge a > 0$. Let $f \in L_1$ be such that $\widehat f(b) \neq 0$ and $(\supp \widehat f) \cap [2b,\infty) =\emptyset$. Then by Lemma \ref{basklh2.1new}(5) we have
$\mathcal T_0(f) Z = \mathcal T_0(f)(Z+Z^2) \neq 0$ 
since $\Lambda(Z^2, \mathcal T_0) \subseteq [2b,\infty)$ by Lemma \ref{basknewlh2.2}.
Consequently, $Z+Z^2\neq 0$ and $(I+Z)^2 \neq I+Z$.
\end{proof}

The operator $V$ from Example \ref{exa4} is memoryless if the function $v$ is constant. We also have  $V \in \mathscr C$ given ${\rm{supp}}\,\widehat v\in [0,\infty)$, $V \in \mathscr {A\!\!C}$ given ${\rm{ supp}}\,\widehat v\in (-\infty, 0]$, $V \in \mathscr {H\!\!C}_a$, given ${\rm{ supp}}\,\widehat v\in [a,\infty)$, and $V \in \mathscr {H\!\!A\!\!C}_a$ given ${\rm{ supp}}\,\widehat v\in (-\infty, -a]$; here $\widehat v$  is the tempered distribution that is the  Fourier transform of the function $v$ which is assumed to be bounded.

An operator from Example \ref{exa5} is memoryless, causal, anticausal, hypercausal, or hyperanticausal if its matrix is diagonal,  lower triangular,  upper triangular, strictly lower triangular, or strictly upper triangular, respectively.

More information on the abstract theory of causality can be found in \cite{BK05} and references therein.

\subsection{Periodic and almost periodic operators}\label{subsec3.2}

Let $\mathcal{M}$ be an $A$-harmonious space that is continuously and injectively embedded into  $\mathcal{M}_0$. The submodules of periodic and almost periodic operators   $\mathcal {AP\, M}$ and $\mathcal{P_\omega(M)}$, $\omega > 0$, from Definitions \ref{ap} and \ref{app}  are also $A$-harmonious spaces. An operator $V$ from Example \ref{exa4} is almost periodic ($\omega$-periodic) if the function $v$ is almost periodic ($\omega$-periodic). All operators in Example \ref{exa5} belong to $\mathcal P_{2\pi}(\mathcal M)$.

The spaces $\mathcal {AP\, M}$ and $\mathcal{P_\omega(M)}$, $\omega > 0$, are Banach algebras   (see, e.~g.,
\cite{BK10}). It is not hard to see that if
$\mathcal F$ is a closed submodule of $\mathcal{M}$, then the spaces $\mathcal {AP^F M}$ and $\mathcal{P^F_\omega(M)}$, $\omega > 0$, are also $A$-harmonious spaces and Banach algebras (see Definition \ref{inap}). An operator $V\in \mathcal {M = M}_0= \mathrm{End}\, L_2$ from Example \ref{exa4} is $\mathcal M_c$-almost-periodic if the function $v$ is almost periodic at infinity \cite{BSS19, BST18, B20}.

More information on almost periodic operators can be found in \cite{BK10, B04} and references therein. The notion of almost periodicity with respect to a submodule is new.

\subsection{Wiener and Beurling classes}

The classes presented in this section were introduced, for example, in \cite{BK14, BK18c}. To define them, we shall make use of the family of functions $(\phi_t)_{t\in\mathbb R}$ from $L_1(\mathbb R)$ given by
\begin{equation}\label{tri}
\widehat\phi(\lambda) = \widehat\phi_0(\lambda) = (1-|\lambda|)\chi_{[-1,1]}(\lambda)\quad\mbox{and}
\quad\widehat\phi_t(\lambda) = \widehat\phi(\lambda-t), \ \lambda, t\in\mathbb R.
\end{equation}
We note that instead of the functions $(\phi_t)_{t> 0}$ one can use other partitions of unity  \cite{BK14}, for example, those built from functions in \eqref{otrap}. The choice of a partition of unity does not affect the resulting class of operators.

Let $\mathcal{M}$ be an $A$-harmonious space that is continuously and injectively embedded into  $\mathcal{M}_0$ and  $\nu$ be a weight.
The Wiener class $\mathcal W_\nu(\mathcal M)$ consists of all operators $X\in\mathcal{M}$ satisfying
\[
\|X\|_{\mathcal W_\nu} = \int_{\mathbb R} \|\mathcal T_0(\phi_t)X\|_{\mathcal M}\nu(t)dt < \infty.
\]
From \cite{BK14, BK18c}, it follows that any Wiener class is an $A$-harmonious space. It is a Banach algebra if the weight $\nu$ is submultiplicative.

An operator $V\in \mathcal {M = M}_0= \mathrm{End}\, L_2$ from Example \ref{exa4} belongs to $\mathcal W_\nu(\mathcal M)$ if the function $v$ belongs to the Beurling algebra   $L_\nu(\mathbb R)$, i.~e.~$\int_{\mathbb R} |v(t)|\nu(t)dt < \infty$. An operator $X\in \mathcal M$ from Example \ref{exa5} belongs to $\mathcal W_\nu(\mathcal M)$ if its diagonals   $X_n$, $n\in\mathbb Z$, are $\nu$-summable, i.~e., $\sum_{n\in\mathbb Z} \nu(n)\|X_n\|_\mathcal M<\infty$.

The Beurling class $\mathcal B_\nu(\mathcal M)$ consists of all operator $X\in\mathcal{M}$ satisfying
\[
\|X\|_{\mathcal B} = \sum_{n\in\mathbb Z_+} \nu(n)\max_{|k|\ge n}\|\mathcal T_0(\phi_k)X\|_{\mathcal M} < \infty.
\]
From \cite{BK14}, it follows that Beurling classes are   $A$-harmonious spaces. Moreover,  $\mathcal B_\nu(\mathcal M)$ is a Banach algebra if the weight $\nu$ is submultiplicative. An operator $X\in \mathcal M$ from Example \ref{exa5} belongs to $\mathcal B_\nu(\mathcal M)$ with $\nu\equiv 1$ if its diagonals $X_n$, $n\in\mathbb Z$, satisfy
\[
 \sum_{n\in\mathbb Z_+} \max_{|k|\ge n}\|X_k\|_{\mathcal M} < \infty.
\]

\subsection{Jaffard classes}\label{jafclass}

The classes in this section were defined in \cite{BK18c}. As in the other examples of harmonious spaces, we assume that $\mathcal{M}$ is an  $A$-harmonious space that is continuously and injectively embedded into  $\mathcal{M}_0$. Additionally, we let $\nu$ be a weight and  $\omega$ be a bi-weight.

The Jaffard-$1$ class  $\mathfrak F^1_{\omega} = \mathfrak F^1_{\omega}(\mathcal M)$ is defined by
\[
\mathfrak F^1_{\omega} = \{X\in \mathcal{M}: \|X\|_{\mathfrak F^1_{\omega}} = \|X\|_{\mathcal{M}}+|X|_{\mathfrak F^1_{\omega}}  < \infty\},
\]
where
\[
|X|_{\mathfrak F^1_{\omega}} = \sup_{a,b\in\mathbb R} \omega(a,b)\|\mathcal T(\phi_a)X\mathcal T(\phi_b)\|_{\mathcal{M}} ,\quad X\in\mathcal M,
\]
and $\phi_a$, $a\in\mathbb R$, are given by \eqref{tri}.

The Jaffard-$2$ class  $\mathfrak F^2_{\nu}= \mathfrak F^2_{\nu}(\mathcal M)$ is defined by
\[
\mathfrak F^2_{\nu} = \{X\in \mathcal M: \|X\|_{\mathfrak F^2_{\nu}} = \|X\|_{\mathcal M}+|X|_{\mathfrak F^2_{\nu}}  < \infty\},
\]
where
\[
|X|_{\mathfrak F^2_{\nu}} = \sup_{a\in\mathbb R} \nu(a)\|\mathcal T_0(\phi_a)X\|_{\mathcal M},
\]
and $\phi_a$, $a\in\mathbb R$, are given by \eqref{tri}.

The Jaffard-$1$ and Jaffard-$2$ classes are $A$-harmonious spaces. In \cite{BK18c}, it was shown that these classes are Banach algebras if their weights are balanced and subconvolutive. It was also shown in \cite{BK18c} that if a weight is balanced and quasi-subconvolutive then the corresponding Jaffard classes are  $A$-admissible Banach modules in the sense of \cite[Definition 3.2]{BK18c} (see also Definition \ref{adop} below); in this case, however, they are not necessarily Banach algebras.

An operator $X\in \mathcal M$ from Example \ref{exa5} belongs to $\mathfrak F^1_{\omega}(\mathcal M)$ if its matrix elements $X_{mn}$, $m,n\in\mathbb Z$, satisfy
\[
\sup_{m,n\in\mathbb Z} \omega(m,n)\|X_{mn}\|_{\mathcal{M}} < \infty.
\]
An operator $X\in \mathcal M$ from Example \ref{exa5} belongs to  $\mathfrak F^2_\nu(\mathcal M)$   if its matrix diagonals $X_n$, $n\in\mathbb Z$, satisfy
\[
\sup_{n\in\mathbb Z} \nu(n)\|X_{n}\|_{\mathcal{M}} < \infty.
\]

The following class of $A$-harmonious spaces is also important (see \cite[Definition 2.15]{BK18c}).

\begin{definition}\label{defsubst}
An $A$-harmonious space $\mathcal M$ is substantial if it contains all Jaffard classes $\mathfrak F^2_{\nu}(\mathcal M_0)$, where $\nu(t) = M\gamma^{|t|}$ for some $M \ge 1$ and  $\gamma > 1$.
\end{definition}

\subsection{$\ABK$-classes}

The operator classes defined here generalize the ones from \cite{ABK08}, see also \cite{SS09}. As above, we assume that
$\mathcal{M}$ is an  $A$-harmonious space that is continuously and injectively embedded into  $\mathcal{M}_0$  and
$\nu$ is a weight. Given $a,b\in\mathbb R$, we also assume that the representation $\mathcal T_{ab}: \mathbb R \to \mathrm{End}\,\mathcal{M}$
 is well defined by $\mathcal T_{ab}(t)X = \mathcal T(at)X\mathcal T(-bt)$, $X\in \mathcal{M}$, and
$(\mathcal{M}, \mathcal T_{ab})$ is a non-degenerate Banach $L_1(\mathbb R)$-module.

The class $\ABK_{\!\!\nu}(a,b)$ consists of all operators $X\in\mathcal{M}$ which satisfy
\[
\|X\|_{\ABK_{\!\!\nu}(a,b)}=\int_{\mathbb R} \nu(t)\|\mathcal T_{ab}(\phi_t)X\|_{\mathcal M}dt < \infty,
\]
where $\phi_t$, $t\in\mathbb R$, are given by \eqref{tri}.

The class $\ABK^{\!\nu}(a,b)$ consists of all operators $X\in\mathcal{M}$ which satisfy
\[
\|X\|_{\ABK^{\!\nu}(a,b)}=\sup_{t\in\mathbb R} \nu(t)\|\mathcal T_{ab}(\phi_t)X\|_{\mathcal M} < \infty,
\]
where $\phi_t$, $t\in\mathbb R$, are given by \eqref{tri}.

An operator $VD_{-1}$ from Example \ref{exa4} belongs to $\ABK_{\!\!\nu}(1,-1)$ if the operator $V$  belongs to the Wiener class $\mathcal W_\nu(\mathcal M)$. An operator $VD_{-1}$  belongs to $\ABK^{\!\nu}(1,-1)$ if the operator $V$  belongs to the Jaffard-$2$ class $\mathfrak F^2_\nu(\mathcal M)$.

For operators in Example \ref{exa5}, classes $\ABK$ describe the decay of matrix elements in the direction of the vector  $(a,b)$.

\section{Abstract scheme of the method of similar operators}\label{basksec3}

The abstract method of similar operators possesses a rich historical lineage of development.  Its origins can be traced back to a diverse array of similarity and perturbation techniques, including
 the classical perturbation methods of celestial mechanics, Ljapunov’s kinematic similarity method \cite{GKK96, Lj56, N15}, Friedrichs’ method of similar operators developed primarily for application in quantum mechanics \cite{F65}, and Turner’s method of similar operators \cite{T65, U04}.
The method has undergone extensive refinement and been applied for various classes of unbounded linear operators. A notable body of work has contributed to its evolution and utilization, exemplified by references such as \cite{B83, B86, B94, B15, BDS11, BKR17, BKU18, BKU19, BP19, P21}.
 
As we mentioned in the introduction, similarity
transformations are widely used in various areas of algebra and analysis, starting with the matrix diagonalization. We cite \cite{SS19} as a comprehensive survey on the use of similarity transformations in perturbation theory of linear operators. We also recall an alternative name for the method of similar operators -- transmutation operator method  \cite{DL57}. One of the primary features that makes similarity transformations useful is the fact that similar operators have the same spectral properties. Thus, if it is possible to find a similarity transformation of an operator of interest into one, for which the spectral properties are known, the knowledge would extend to the original operator.

We begin our exposition of the abstract method with the definition of similar operators (in the unbounded case).

\begin{definition}[\cite{B87}]
Two linear operators $A_i: D(A_i)\subset\mathcal{X}\to\mathcal{X}$, $i=1, 2$, are called similar if there exists a continuously invertible operator $U\in\mathrm{End}\,\mathcal{X}$ such that
$UD(A_2)=D(A_1)$, $A_1Ux=UA_2x$, $x\in D(A_2)$. The operator $U$ is called the transformation operator of $A_1$ into $A_2$ or an intertwining operator.
\end{definition}

The key notion of the method of similar operators is that of an admissible (for the operator  $A:
D(A)\subset\mathcal{X}\to\mathcal{X}$) triple.

\begin{definition}[\cite{B87}]\label{baskdef3.3}
Let $\mathcal{M}$ be a linear space of operators from $\mathfrak{L}_A(\mathcal{X})$, $J: \mathcal{M}\to\mathcal{M}$, and
$\Gamma: \mathcal{M}\to\mathrm{End}\,\mathcal{X}$ be linear operators (transforms). The space $\mathcal{M}$ is called the space of admissible perturbations and the triple $(\mathcal{M}, J, \Gamma)$ is called admissible for the (unperturbed) operator $A$ if the following properties hold.
\begin{enumerate}
    \item $\mathcal{M}$ is a Banach space that is continuously embedded in $\mathfrak{L}_A(\mathcal{X})$; in particular,
    $\|X\|_{\mathcal M}\leqslant \mathrm{const}\,\|X\|_A$, $X\in\mathcal{M}$, where $\|\cdot\|_{\mathcal M}$ is the
    norm in $\mathcal{M}$.
    \item The transforms $J$ and $\Gamma$ are continuous linear operators.
    \item $(\Gamma X)D(A)\subset D(A)$ and $A(\Gamma X)-(\Gamma X)A=X-JX$, $X\in\mathcal{M}$.
    \item $X\Gamma Y$ and $(\Gamma X)Y\in\mathcal{M}$ for all $X$, $Y\in\mathcal{M}$, and there exists a constant $\gamma>0$ such that
$$
\|\Gamma\|\leqslant \gamma, \quad \max\{\|X\Gamma Y\|_{\mathcal{M}}, \|(\Gamma X)Y\|_{\mathcal{M}}\}\leqslant \gamma\|X\|_{\mathcal{M}}\|Y\|_{\mathcal{M}}, \quad X, Y\in\mathcal{M}.
$$
    \item For any $X\in\mathcal{M}$ and $\varepsilon>0$ there exists $\lambda_\varepsilon\in\rho(A)$ such that
$\|X(A-\lambda_\varepsilon I)^{-1}\|\leqslant\varepsilon$.
\end{enumerate}
\end{definition}

\begin{remark}
Property 5 in Definition \ref{baskdef3.3} may be replaced with any condition that would ensure $(\Gamma X)D(A)= D(A)$. For example, one may require $\mathrm{Ran}\,\Gamma X\subset D(A)$ and $A\Gamma X\in\mathrm{End}\,\mathcal{X}$
for any $X\in\mathcal{M}$.
\end{remark}

Typically, it is assumed in the method of similar operators that the transform $J$ is an idempotent. In this case, it is customary to also require that
\begin{equation}\label{dopusl}
J((\Gamma X)JY) = 0 \mbox{ for all } X,Y\in\mathcal M.
\end{equation}
In this paper, Definition \ref{baskdef3.3}
is more general than, for example, \cite[Definition~3.1]{BKU19}. Therefore, standard theorems of the method, such as e.g.~ \cite[Theorem~3.1]{BKU19}, are not immediately applicable. In particular, our next theorem requires a proof. To formulate the result, let us introduce the nonlinear transform
 $\Phi: \mathcal{M}\to\mathcal{M}$ given by
\begin{equation}\label{bask3.1}
\Phi(X)=B\Gamma X-(\Gamma X)(JX)+B, \ X\in\mathcal M,
\end{equation}
and the constant
\begin{equation}\label{bask3.1'}
j =\max\{1, \|J\|\}.
\end{equation}

\begin{theorem}\label{baskth3.1}
Let $(\mathcal{M}, J, \Gamma)$ be an admissible triple for an operator $A: D(A)\subset\mathcal{X}\to\mathcal{X}$ and
\begin{equation}\label{bask3.2}
(3+2\sqrt{2})j\gamma \|B\|_{\mathcal M}<1,
\end{equation}
where $\gamma$ comes from Definition \ref{baskdef3.3}(4), and the constant $j$ is given by \eqref{bask3.1'}. Then the transform
$\Phi: \mathcal{M}\to\mathcal{M}$ given by \eqref{bask3.1} is a contraction and has a unique fixed point $X_*$, i.~e.~$X_*=\Phi(X_*)$, in the ball
$$
\mathcal{B}=\{X\in\mathcal{M}: \|X-B\|_{\mathcal M}\leqslant \sqrt{2}\|B\|_{\mathcal M}\}.
$$
The operator $X_*$ is the limit of simple iterations $X_n=\Phi(X_{n-1})$, $n\geqslant 2$, $X_1=B$. Moreover, under the above conditions, the operator  $A-B$ is similar to the operator  $A-JX_*$ and the intertwining operator is given by
$U=I+\Gamma X_*\in\mathrm{End}\,\mathcal{X}$, i.~e.
\begin{equation}\label{bask3.3}
(A-B)(I+\Gamma X_*)=(I+\Gamma X_*)(A-JX_*).
\end{equation}
\end{theorem}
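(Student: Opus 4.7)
The plan is to apply the Banach fixed point theorem to the nonlinear transform $\Phi$ on the complete metric space $\mathcal{B}$, and then to observe that the fixed point equation $X_*=\Phi(X_*)$ is algebraically equivalent to the intertwining identity \eqref{bask3.3} once the commutation formula from property~3 of Definition~\ref{baskdef3.3} is invoked.

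To check that $\Phi$ maps $\mathcal{B}$ into itself and is a contraction, I would combine property~4 of the admissible triple with $\|JX\|_{\mathcal{M}}\leq j\|X\|_{\mathcal{M}}$ and the bound $\|X\|_{\mathcal{M}}\leq(1+\sqrt{2})\|B\|_{\mathcal{M}}$ available on $\mathcal{B}$. Estimating $\Phi(X)-B=B\Gamma X-(\Gamma X)(JX)$ yields $\|\Phi(X)-B\|_{\mathcal{M}}\leq\gamma j(4+3\sqrt{2})\|B\|_{\mathcal{M}}^{2}$; since $4+3\sqrt{2}=\sqrt{2}\,(3+2\sqrt{2})$, the hypothesis \eqref{bask3.2} gives $\|\Phi(X)-B\|_{\mathcal{M}}\leq\sqrt{2}\,\|B\|_{\mathcal{M}}$. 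For the contraction factor, the identity
\[
\Phi(X)-\Phi(Y) = B\Gamma(X-Y)-(\Gamma X)\bigl(J(X-Y)\bigr)-\bigl(\Gamma(X-Y)\bigr)(JY)
\]
and the same tools produce $\|\Phi(X)-\Phi(Y)\|_{\mathcal{M}}\leq(3+2\sqrt{2})j\gamma\|B\|_{\mathcal{M}}\|X-Y\|_{\mathcal{M}}$, strictly less than $\|X-Y\|_{\mathcal{M}}$ by \eqref{bask3.2}. The Banach fixed point theorem then delivers the unique fixed point $X_*$ as the limit of the stated iterations.

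For the similarity assertion, I would first note that $U=I+\Gamma X_*$ is invertible in $\mathrm{End}\,\mathcal{X}$: the estimate $\|\Gamma X_*\|\leq\gamma(1+\sqrt{2})\|B\|_{\mathcal{M}}\leq(1+\sqrt{2})(3-2\sqrt{2})=\sqrt{2}-1<1$ allows a Neumann series. Next, on $D(A)$ property~3 of Definition~\ref{baskdef3.3} rewrites $A\Gamma X_*$ as $(\Gamma X_*)A+X_*-JX_*$; expanding both sides of \eqref{bask3.3}, the discrepancy $(A-B)U-U(A-JX_*)$ collapses to $X_*-B-B\Gamma X_*+(\Gamma X_*)(JX_*)=X_*-\Phi(X_*)=0$. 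Thus \eqref{bask3.3} holds on $D(A)$.

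The main obstacle I anticipate concerns the domain bookkeeping required by the paper's definition of similarity, namely that $U$ maps $D(A)$ bijectively onto $D(A)$. The inclusion $U(D(A))\subseteq D(A)$ is immediate from property~3, but the converse is not automatic from operator-norm invertibility alone, since $\Gamma X_*$ is only known to preserve $D(A)$ rather than to map $\mathcal{X}$ into $D(A)$. I would address this using either the remark following Definition~\ref{baskdef3.3}, which offers substitutes for property~5 that directly ensure $(\Gamma X)D(A)=D(A)$ and hence $(I+\Gamma X_*)D(A)=D(A)$, or by exploiting property~5 to choose $\lambda\in\rho(A)$ with $\|X_*(A-\lambda I)^{-1}\|$ small, so that the closed operator $U^{-1}(A-B)U$ can be identified with $A-JX_*$ on $D(A)$ via resolvent comparison.
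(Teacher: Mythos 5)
Your proposal is correct and follows essentially the same route as the paper: the Banach fixed point theorem on the ball of radius $\sqrt{2}\|B\|_{\mathcal M}$ with the identical invariance and contraction estimates (the factor $4+3\sqrt{2}=\sqrt{2}(3+2\sqrt{2})$ and the Lipschitz constant $(3+2\sqrt{2})j\gamma\|B\|_{\mathcal M}$ both match), the same algebraic collapse of $(A-B)U-U(A-JX_*)$ to $X_*-\Phi(X_*)$, and the same Neumann-series bound $\|\Gamma X_*\|\le(1+\sqrt{2})(3-2\sqrt{2})=\sqrt{2}-1<1$. The domain issue you flag at the end is resolved in the paper exactly by your second suggestion: property~5 supplies $\lambda$ with $\|(X_*-JX_*)(A-\lambda I)^{-1}+\Gamma X_*\|<1$, and the identity $\Gamma X_*(A-\lambda I)^{-1}=(A-\lambda I)^{-1}\bigl((X_*-JX_*)(A-\lambda I)^{-1}+\Gamma X_*\bigr)$ then shows $U^{-1}$ preserves $D(A)$.
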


\begin{proof}
We shall use the Banach fixed point theorem \cite{B22, KA82}. 

We start by showing that $\Phi(\mathcal{B})\subseteq\mathcal{B}$ and
$\|\Phi(X)-\Phi(Y)\|_{\mathcal M}\leqslant q\|X-Y\|_{\mathcal M}$ for all $X$, $Y\in\mathcal{B}$ and some $q\in (0, 1)$.

Let 
$\mathcal{B}_\alpha=\{X\in\mathcal{M}: \|X-B\|_{\mathcal M}\leqslant \alpha\|B\|_{\mathcal M}\}$. For all $X \in\mathcal{B}_\alpha$,  
the inequalities
$$
\|X\|_{\mathcal M}\leqslant \|X-B\|_{\mathcal M}+\|B\|_{\mathcal M}\leqslant (\alpha+1)\|B\|_{\mathcal M},
$$
yield that invariance of the ball  $\mathcal{B}_\alpha$ for $\Phi$ would follow from the inequalities 
\begin{flalign*}
\|\Phi(X)-B\|_{\mathcal M} &\leqslant \|B\Gamma X-\Gamma X(JX)\|_{\mathcal M} \leqslant (\alpha+1)\gamma\|B\|_{\mathcal M}^2+(\alpha+1)^2\gamma\|B\|_{\mathcal M}^2\|J\| \\
&\leqslant j\gamma\|B\|_{\mathcal M}^2(\alpha+1+(\alpha+1)^2)\leqslant \alpha\|B\|_{\mathcal M}.
\end{flalign*}
The latter of the above inequalities holds if  $\alpha$ is such that
$$
\alpha^2j\gamma\|B\|_{\mathcal M}+\alpha(3j\gamma\|B\|_{\mathcal M} -1)+\alpha j\gamma\|B\|_{\mathcal M} <0.
$$
Clearly, such an $\alpha$ exists if $(3+2^{3/2})j\gamma\|B\|_{\mathcal M}<1$, which is precisely the condition in {\eqref{bask3.2}}. 

Additionally,  for all $X$, $Y\in\mathcal{B}_\alpha$, one has
\begin{flalign*}
\|\Phi(X)-\Phi(Y)\|_{\mathcal M} &\leqslant \|B\Gamma X-(\Gamma X)JX-B\Gamma Y+\Gamma Y(JY)\|_{\mathcal M}\\
&\leqslant \|B\Gamma(X-Y)+\Gamma Y J(Y-X)+\Gamma (Y-X)JX\|_{\mathcal M} \\
&\leqslant \gamma\|B\|_{\mathcal M}\|X-Y\|_{\mathcal M}+2(\alpha+1)\gamma \|J\|\|B\|_{\mathcal M}\|X-Y\|_{\mathcal M}\\
&\leqslant (2\alpha+3)j\gamma\|B\|_{\mathcal M}\|X-Y\|_{\mathcal M}.
\end{flalign*}
Consequently, the map $\Phi: \mathcal{M}\to\mathcal{M}$ is a contraction in  $\mathcal{B}_\alpha$ if $(2\alpha+3)j\gamma\|B\|_{\mathcal M}<1$.
From \eqref{bask3.2}, we derive $\alpha=\sqrt{2}$.
Thus, Banach fixed point theorem applies and we conclude that, in the ball $\mathcal{B}=\mathcal{B}_{\sqrt 2}$, the map $\Phi$ has a unique  fixed point 
$X_*\in\mathcal{B}\subset\mathcal{M}$, which can be found as a limit of simple iterations starting from $X_1=B$. This proves the first part of the theorem.

Next, we prove the equality \eqref{bask3.3}. We have
\begin{flalign*}
(A-B)(I+\Gamma X_*) &= A-B-A\Gamma X_*-B\Gamma X_*=A-B+(\Gamma X_*)A \\
&+ X-JX-B\Gamma X_* \\
&= (I+\Gamma X_*)A+X-JX-B\Gamma X_*-(X-B\Gamma X_*+\Gamma X_*(JX_*)) \\
&=(I+\Gamma X_*)(A-JX_*).
\end{flalign*}

We also need to prove that the intertwining operator $I+\Gamma X_*$ is continuously invertible. This follows from $\|\Gamma X_*\|\leqslant \gamma\|X_*\|_{\mathcal M}\leqslant
(\sqrt{2}+1)\gamma\|B\|_{\mathcal M}<1$, which yields that
$$
(I+\Gamma X_*)^{-1}=\sum_{n=0}^\infty(-1)^n(\Gamma X_*)^n.
$$

It remains to prove that $(I+\Gamma X_*)D(A)=D(A)$. From Definition \ref{baskdef3.3}(3), one immediately gets $(I+\Gamma X_*)D(A)\subseteq D(A)$. Let us show that $(I+\Gamma X_*)^{-1}D(A)\subseteq D(A)$. Using the same property of Definition \ref{baskdef3.3}, given $\lambda\in\rho(A)$, we get
\begin{flalign*}
\Gamma X_*(A-\lambda I)^{-1} &= (A-\lambda I)^{-1}(A-\lambda I)\Gamma X_*(A-\lambda I)^{-1} \\
&= (A-\lambda I)^{-1}(X_*-JX_*+\Gamma X_*A-\lambda\Gamma X_*)(A-\lambda I)^{-1} \\
&= (A-\lambda I)^{-1}((X_*-JX_*)(A-\lambda I)^{-1}+\Gamma X_*).
\end{flalign*}

Using Definition \ref{baskdef3.3}(5), we choose $\lambda\in\rho(A)$ such that
$$
\|(X_*-JX_*)(A-\lambda I)^{-1}+\Gamma X_*\|<1.
$$
Then
$$
(I+\Gamma X_*Ce)^{-1}(A-\lambda I)^{-1}=(A-\lambda I)^{-1}(I+(X_*-JX_*)(A-\lambda I)^{-1}+\Gamma X_*)^{-1},
$$
and the theorem is proved.
\end{proof}

\begin{remark}
We remark that initially \cite{B87} the method of similar operators with equation \eqref{bask3.1}
used the stronger assumption $6\|B\|j\gamma<1$ in place of \eqref{bask3.2}; moreover, the radius of the ball that was invariant for the map
$\Phi: \mathcal{M}\to\mathcal{M}$ was $2.5\|B\|$. We also note that the above method of proof is new as the proof in \cite{B87} did not use a fixed point theorem, it employed the method of majorant equations \cite{KA82}.
\end{remark}

Next, we present a modification of the method for the case when the transform
 $J:\mathcal{M}\to\mathcal{M}$ is an idempotent. As we mentioned above, for this case, we also add condition \eqref{dopusl}.
Then, applying the transform $J$ to both sides of the equation $X=\Phi(X)$, where $\Phi: \mathcal{M}\to\mathcal{M}$ is given by
\eqref{bask3.1}, we get $JX=J(B\Gamma X)+JB$, which yields the following replacement of \eqref{bask3.1}:
\begin{equation}\label{bask3.4}
X=B\Gamma X-(\Gamma X)JB-(\Gamma X)J(B\Gamma X)+B=:\Phi_1(X), \ X\in\mathcal M.
\end{equation}
This puts us in the setting of \cite{BKU19} and allows us to state the following result, which is essentially
\cite[Theorem~3.1]{BKU19}.
\begin{theorem}\label{baskth3.2}
Let $(\mathcal{M}, J, \Gamma)$ be an admissible triple for  $A: D(A)\subset\mathcal{X}\to\mathcal{X}$, where $J$ is an idempotent, $B\in\mathcal{M}$, and condition \eqref{dopusl} holds. Assume also that
\begin{equation}\label{bask3.5}
4j\gamma \|B\|_{\mathcal M}<1.
\end{equation}
Then the transform $\Phi_1:\mathcal{M}\to\mathcal{M}$ given by \eqref{bask3.4}  is a contraction and has a unique fixed point $X_*$ in the ball
$$
\mathcal{B}=\{X\in\mathcal{M}: \|X_*-B\|_{\mathcal M}\leqslant 3\|B\|_{\mathcal M}\}.
$$
The fixed point $X_*$ can be found via simple iterations $X_0=0$, $X_1=\Phi_1(X_0)=B$, \dots,
and the operator  $A-B$ is similar to the operator  $A-JX_*$ with the intertwining operator  given by
$I+\Gamma X_*\in\mathrm{End}\,\mathcal{X}$.
\end{theorem}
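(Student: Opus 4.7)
The plan is to reduce the theorem to an application of Banach's fixed point theorem applied to $\Phi_1$ on the closed ball $\mathcal B=\{X\in\mathcal M:\|X-B\|_{\mathcal M}\le 3\|B\|_{\mathcal M}\}$, and then to harvest the similarity conclusion by bootstrapping back to the fixed-point equation $X=\Phi(X)$ of Theorem \ref{baskth3.1}. The only genuine obstacle, which is not serious, is the quadratic-in-$\Gamma X$ term $(\Gamma X)J(B\Gamma X)$ in \eqref{bask3.4}: it must be shown to be dominated by the linear terms under the (weaker than \eqref{bask3.2}) assumption \eqref{bask3.5}.

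First, I would verify the link between $\Phi_1$ and $\Phi$. Applying $J$ to an identity $X=\Phi_1(X)$ and using $J^2=J$ together with \eqref{dopusl} applied with $Y=B$ and with $Y=B\Gamma X$ (so that $J((\Gamma X)JB)=0$ and $J((\Gamma X)J(B\Gamma X))=0$) gives $JX=J(B\Gamma X)+JB$. Substituting this expression for $JX$ back into \eqref{bask3.1} yields $\Phi(X)=\Phi_1(X)=X$, so every fixed point of $\Phi_1$ is automatically a fixed point of $\Phi$. Conversely, the same manipulation shows that fixed points of $\Phi$ satisfy $X=\Phi_1(X)$, so the two equations are equivalent under the present hypotheses.

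Next, I would carry out the contraction argument for $\Phi_1$ on $\mathcal B$. For $X\in\mathcal B$ one has $\|X\|_{\mathcal M}\le 4\|B\|_{\mathcal M}$, and Definition \ref{baskdef3.3}(4) together with $\|J\|\le j$ yields
\[
\|\Phi_1(X)-B\|_{\mathcal M}\le \gamma\|B\|_{\mathcal M}\|X\|_{\mathcal M}+j\gamma\|X\|_{\mathcal M}\|B\|_{\mathcal M}+j\gamma^2\|X\|_{\mathcal M}\|B\|_{\mathcal M}\|X\|_{\mathcal M},
\]
which is bounded by $4\gamma\|B\|_{\mathcal M}^2+4j\gamma\|B\|_{\mathcal M}^2+16j\gamma^2\|B\|_{\mathcal M}^3$. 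Since $j\ge 1$, condition \eqref{bask3.5} forces $4\gamma\|B\|_{\mathcal M}<1$ and $16j\gamma^2\|B\|_{\mathcal M}^2\le(4j\gamma\|B\|_{\mathcal M})^2<1$, so the right-hand side is strictly less than $3\|B\|_{\mathcal M}$ and $\Phi_1(\mathcal B)\subseteq\mathcal B$. For contractivity, I would split
\[
(\Gamma Y)J(B\Gamma Y)-(\Gamma X)J(B\Gamma X)=(\Gamma(Y-X))J(B\Gamma Y)+(\Gamma X)J(B\Gamma(Y-X))
\]
and estimate each piece by Definition \ref{baskdef3.3}(4) to obtain
\[
\|\Phi_1(X)-\Phi_1(Y)\|_{\mathcal M}\le 2j\gamma\|B\|_{\mathcal M}\bigl(1+4\gamma\|B\|_{\mathcal M}\bigr)\|X-Y\|_{\mathcal M},
\]
whose coefficient is strictly less than $1$ under \eqref{bask3.5}. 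Banach's theorem then delivers a unique $X_*\in\mathcal B$, obtainable as the limit of the iterations $X_0=0$, $X_n=\Phi_1(X_{n-1})$.

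Finally, since $X_*$ is also a fixed point of $\Phi$ by the first step, the intertwining identity \eqref{bask3.3} is available verbatim from the computation in the proof of Theorem \ref{baskth3.1}, giving $(A-B)(I+\Gamma X_*)=(I+\Gamma X_*)(A-JX_*)$. Invertibility of $I+\Gamma X_*$ follows from a Neumann series since $\|\Gamma X_*\|\le 4\gamma\|B\|_{\mathcal M}<1$ under \eqref{bask3.5}, and the equality $(I+\Gamma X_*)D(A)=D(A)$ is obtained exactly as in the closing paragraph of the proof of Theorem \ref{baskth3.1}, using Definition \ref{baskdef3.3}(5) to pick $\lambda\in\rho(A)$ for which $(X_*-JX_*)(A-\lambda I)^{-1}+\Gamma X_*$ has norm less than $1$.
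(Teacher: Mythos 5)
Your proof is correct and follows the same blueprint the paper uses for Theorem \ref{baskth3.1} (ball invariance, contraction, Banach fixed point, then the intertwining computation, Neumann-series invertibility, and preservation of $D(A)$); the paper itself does not write out a proof of Theorem \ref{baskth3.2}, deferring instead to the cited reference after deriving \eqref{bask3.4}. Your added verifications --- that fixed points of $\Phi$ and $\Phi_1$ coincide under \eqref{dopusl}, and the quantitative bounds showing invariance of $\mathcal B$ and the Lipschitz constant $2j\gamma\|B\|_{\mathcal M}(1+4\gamma\|B\|_{\mathcal M})<1$ under \eqref{bask3.5} --- all check out.
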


Condition \eqref{bask3.5} of Theorem \ref{baskth3.2} can be improved if the operator $B\in\mathcal{M}$ 
satisfies $JB=0$. We derive the following result from  \cite[Remark 3.1]{BKU19}.

\begin{theorem}\label{baskth3.2c}
Assume that $(\mathcal{M}, J, \Gamma)$ is an admissible triple for $A:$ $D(A)\subset\mathcal{X}\to\mathcal{X}$,
$B\in\mathcal{M}$, condition \eqref{dopusl} holds,  $JB=0$, and
$$
3j\gamma \|B\|_{\mathcal M}<1.
$$
Then the operator $A-B$ is similar to the operator  $A-JX_*$ with the intertwining operator  given by
$I+\Gamma X_*\in\mathrm{End}\,\mathcal{X}$, where
 $X_*\in\mathcal{M}$ is the solution of the nonlinear equation
$$
X=B\Gamma X-(\Gamma X)J(B\Gamma X)+B=\Phi_2(X).
$$
\end{theorem}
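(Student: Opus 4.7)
The strategy mirrors that of Theorem \ref{baskth3.1}: apply Banach's fixed-point theorem to $\Phi_2:\mathcal M\to\mathcal M$ on a suitable invariant ball, then derive the similarity identity. What is new here is the role played by the extra hypothesis $JB=0$, which allows one to work with the slightly smaller map $\Phi_2$ and thus relax the smallness condition on $B$ from \eqref{bask3.2} and \eqref{bask3.5} down to $3j\gamma\|B\|_{\mathcal M}<1$.

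The central observation is that every fixed point of $\Phi_2$ is automatically a fixed point of the map $\Phi$ from Theorem \ref{baskth3.1}. Indeed, applying the idempotent $J$ to both sides of $X_*=B\Gamma X_*-(\Gamma X_*)J(B\Gamma X_*)+B$, and using $JB=0$ together with condition \eqref{dopusl} applied to $Y=B\Gamma X_*\in\mathcal M$, one obtains $JX_*=J(B\Gamma X_*)$. Consequently $(\Gamma X_*)JX_*=(\Gamma X_*)J(B\Gamma X_*)$, so $X_*$ also satisfies $X_*=B+B\Gamma X_*-(\Gamma X_*)JX_*$. The similarity identity \eqref{bask3.3}, the invertibility of $I+\Gamma X_*$, and the domain equality $(I+\Gamma X_*)D(A)=D(A)$ then follow verbatim from the corresponding steps in the proof of Theorem \ref{baskth3.1}.

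It remains to construct $X_*$ as the unique fixed point of $\Phi_2$ on a ball $\mathcal B_\alpha=\{X\in\mathcal M:\|X-B\|_{\mathcal M}\le\alpha\|B\|_{\mathcal M}\}$. For $X,Y\in\mathcal B_\alpha$ one has $\|X\|_{\mathcal M}\le(\alpha+1)\|B\|_{\mathcal M}$, and Definition \ref{baskdef3.3}(4) together with $\|J\|\le j$ gives
\begin{align*}
\|\Phi_2(X)-B\|_{\mathcal M} &\le \gamma(\alpha+1)\|B\|_{\mathcal M}^2 + j\gamma^2(\alpha+1)^2\|B\|_{\mathcal M}^3,\\
\|\Phi_2(X)-\Phi_2(Y)\|_{\mathcal M} &\le \bigl(\gamma\|B\|_{\mathcal M}+2j\gamma^2(\alpha+1)\|B\|_{\mathcal M}^2\bigr)\|X-Y\|_{\mathcal M}.
\end{align*}
The absence of a $(\Gamma X)JB$ term in $\Phi_2$ (compared with $\Phi_1$ in Theorem \ref{baskth3.2}) strips one factor of $j\gamma\|B\|_{\mathcal M}$ from what would otherwise appear. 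Choosing $\alpha=2$, the invariance bound above is dominated by $6\gamma\|B\|_{\mathcal M}(1+\tfrac12)<2/j\le 2$ and the Lipschitz constant by $\gamma\|B\|_{\mathcal M}(1+6j\gamma\|B\|_{\mathcal M})<3\gamma\|B\|_{\mathcal M}<1/j\le 1$, both strict consequences of $j\gamma\|B\|_{\mathcal M}<1/3$; the Banach fixed-point theorem then produces the required $X_*\in\mathcal B_2$.

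The main obstacle is purely the bookkeeping needed to confirm that the weakened threshold $3j\gamma\|B\|_{\mathcal M}<1$ is exactly sufficient for simultaneous invariance and strict contraction on a common ball. No genuinely new ideas beyond those already present in the proof of Theorem \ref{baskth3.1} are required once the equivalence of fixed points of $\Phi_2$ and $\Phi$ noted above is in hand.
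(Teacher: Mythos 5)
Your proof is correct and follows the same contraction-mapping scheme as Theorem \ref{baskth3.1}; the paper gives no proof of this statement at all (it simply defers to Remark 3.1 of \cite{BKU19}), so your argument supplies the missing details in the intended way. The key reduction --- that $JB=0$ together with \eqref{dopusl} applied to $Y=B\Gamma X_*\in\mathcal M$ forces $JX_*=J(B\Gamma X_*)$, so that any fixed point of $\Phi_2$ is a fixed point of $\Phi$, after which the intertwining identity, the invertibility of $I+\Gamma X_*$ (via $\|\Gamma X_*\|\le 3\gamma\|B\|_{\mathcal M}<1$), and the domain equality carry over verbatim --- is exactly right, and your invariance and contraction estimates on $\mathcal B_2$ do close under $3j\gamma\|B\|_{\mathcal M}<1$. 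The only blemish is cosmetic: the intermediate quantity ``$6\gamma\|B\|_{\mathcal M}(1+\tfrac12)$'' is garbled, but the bound you evidently intend, $3\gamma\|B\|_{\mathcal M}\bigl(1+3j\gamma\|B\|_{\mathcal M}\bigr)<6\gamma\|B\|_{\mathcal M}<2/j\le 2$, is valid, so nothing is lost.
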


We conclude this section by outlining the setting in which the method of similar operators reduces to its predecessor - Friedrichs' method \cite{B87, DS88III, F65p,S83}. In this case, the transform $J$ satisfies $J = 0$. Let us state and prove the corresponding result.

\begin{theorem}\label{baskth3.4}
Let $(\mathcal{M}, J, \Gamma)$ be an admissible triple with $J=0$. Assume that either    
\begin{equation}\label{bask3.6}
2\gamma\|B\|_{\mathcal M}<1,
\end{equation}
or $\mathcal{M}$ is a space of quasi-nilpotent operators that is invariant for $\Gamma$ and
\begin{equation}\label{bask3.6up}
\gamma\|B\|_{\mathcal M}<1.
\end{equation}
Then the operator $A-B$, $B\in\mathcal{M}$, is similar to the operator 
$A$ and 
$$
(A-B)(I+\Gamma X_*)=(I+\Gamma X_*)A,
$$
where $X_*\in\mathcal{M}$ is the unique solution of the operator equation
\begin{equation}\label{bask3.7}
X=B\Gamma X+B=\Phi_3(X),
\end{equation}
which can be found by simple iterations starting with $X_{1}=B$.
\end{theorem}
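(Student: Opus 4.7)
The plan is to apply the Banach fixed point theorem to the affine map $\Phi_3$, verify the intertwining identity by direct computation, and then establish invertibility of $I + \Gamma X_*$ in $\mathrm{End}\,\mathcal X$ with preservation of $D(A)$. The two hypotheses \eqref{bask3.6} and \eqref{bask3.6up} diverge only at the invertibility step.

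For the fixed point, the affine structure of $\Phi_3$ together with Definition \ref{baskdef3.3}(4) gives
\[
\|\Phi_3(X) - \Phi_3(Y)\|_{\mathcal M} = \|B\Gamma(X-Y)\|_{\mathcal M} \leq \gamma\|B\|_{\mathcal M}\|X - Y\|_{\mathcal M},
\]
which is a contraction on all of $\mathcal M$, since $\gamma\|B\|_{\mathcal M} < 1$ under either hypothesis. The Banach fixed point theorem yields a unique $X_* \in \mathcal M$ satisfying $X_* = B + B\Gamma X_*$, obtainable as the stated limit of simple iterations, with the a priori bound $\|X_*\|_{\mathcal M} \leq \|B\|_{\mathcal M}/(1 - \gamma\|B\|_{\mathcal M})$. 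For the intertwining, Definition \ref{baskdef3.3}(3) with $J = 0$ gives $A\Gamma X_* = (\Gamma X_*)A + X_*$ on $D(A)$; substituting this and using the fixed-point equation yields $(A - B)(I + \Gamma X_*) = (I + \Gamma X_*)A$ on $D(A)$.

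For invertibility of $I + \Gamma X_*$: under \eqref{bask3.6}, Definition \ref{baskdef3.3}(4) combined with the a priori bound gives $\|\Gamma X_*\| \leq \gamma\|B\|_{\mathcal M}/(1 - \gamma\|B\|_{\mathcal M}) < 1$, where the strict inequality consumes precisely the factor of $2$ in $2\gamma\|B\|_{\mathcal M} < 1$, and the Neumann series produces the inverse in $\mathrm{End}\,\mathcal X$. Under \eqref{bask3.6up}, $\Gamma X_* \in \mathcal M$ by $\Gamma$-invariance of $\mathcal M$, hence $\Gamma X_*$ is quasi-nilpotent; the continuous embedding of $\mathcal M$ into $\mathrm{End}\,\mathcal X$ transfers $\|(\Gamma X_*)^n\|^{1/n} \to 0$ to the ambient operator-norm setting, so $\sigma(\Gamma X_*) = \{0\}$ and $I + \Gamma X_*$ is invertible. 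In either case, $(I + \Gamma X_*)D(A) = D(A)$ follows exactly as in the proof of Theorem \ref{baskth3.1}: the forward inclusion is Definition \ref{baskdef3.3}(3), and the reverse uses Definition \ref{baskdef3.3}(5) to choose $\lambda \in \rho(A)$ making $I + X_*(A - \lambda I)^{-1} + \Gamma X_*$ invertible, which factors $(I + \Gamma X_*)^{-1}$ through $(A - \lambda I)^{-1}$ in a $D(A)$-preserving way.

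The step I expect to be most delicate is the quasi-nilpotent subcase: one must be careful about which ambient norm ``quasi-nilpotent'' is interpreted in and then transfer the conclusion to $\mathrm{End}\,\mathcal X$ via the continuous embedding implicit in Definition \ref{baskdef3.3}(1); additionally, the Neumann-series domain-preservation argument (straightforward in the first case because $\|\Gamma X_*\| < 1$) may require adaptation in the quasi-nilpotent setting, for example by combining partial sums of $\sum_{n\ge 0}(-\Gamma X_*)^n$ with closedness of $A$.
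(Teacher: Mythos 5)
Your proposal is correct and follows essentially the same route as the paper: Banach fixed point theorem for $\Phi_3$, the same intertwining computation via Definition \ref{baskdef3.3}(3) with $J=0$, and the same dichotomy for invertibility (norm bound $\|\Gamma X_*\|<1$ under \eqref{bask3.6}, quasi-nilpotence under \eqref{bask3.6up}), with $D(A)$-preservation deferred to the argument of Theorem \ref{baskth3.1}. The only difference is cosmetic: you exploit the affine structure of $\Phi_3$ to get a global contraction on $\mathcal M$ with the a priori bound $\|X_*\|_{\mathcal M}\le \|B\|_{\mathcal M}/(1-\gamma\|B\|_{\mathcal M})<2\|B\|_{\mathcal M}$, whereas the paper locates $X_*$ in the invariant ball $\mathcal B_1$ to obtain the same estimate $\|X_*\|_{\mathcal M}\le 2\|B\|_{\mathcal M}$.
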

\begin{proof}
The proof follows the same blueprint as that of
Theorem \ref{baskth3.1}. We will only point out a few key points: constructing a ball where the fixed point theorem holds and ensuring that the intertwining operator $I+\Gamma X_*$ is invertible. 

Thus, we consider the ball
 $\mathcal{B}_\alpha=\{X\in\mathcal{M}: \|X-B\|_{\mathcal M}\leqslant \alpha\|B\|_{\mathcal M}\}$, where $\alpha\in\mathbb{R}_+$,
and find a condition on $\alpha$ which ensures that the ball is invariant for $\Phi_3:\mathcal{M}\to\mathcal{M}$; the map $\Phi_3$ will automatically be a contraction in the ball due to \eqref{bask3.6up}. In other words, we want to find for which $\alpha$ one has
$\|\Phi_3(X)-B\|_{\mathcal M}\leqslant \alpha\|B\|_{\mathcal M}$ whenever $\|X-B\|_{\mathcal M}\leqslant \alpha\|B\|_{\mathcal M}$. We use a weaker condition: $\|X\|_{\mathcal M}\leqslant (\alpha+1)\|B\|_{\mathcal M}$.
In that case $\|\Phi_3(X)-B\|_{\mathcal M}=\|B\Gamma X\|_{\mathcal M}\leqslant \gamma(\alpha+1)\|B\|_{\mathcal M}^2$, and we see that
$\gamma\|B\|_{\mathcal M} \leqslant \frac{\alpha}{\alpha+1}$ is sufficient for the invariance. As the right-hand side tends to $1$ when $\alpha\to\infty$,
we get that \eqref{bask3.6up} yields existence of $\alpha$ for which $\mathcal{B}_\alpha$ is invariant.

Next, we show that the operator $I+\Gamma X_*$ is invertible. In case of \eqref{bask3.6}, the above argument showed that $X_* \in \mathcal{B}_\alpha$ with $\alpha = 1$, yielding $\|X_*\|_{\mathcal M}\leqslant 2\|B\|_{\mathcal M}$ and, therefore, $\|\Gamma X_*\| \le \gamma \|X_*\|_{\mathcal M} \le 2\gamma\|B\|_{\mathcal M}< 1$. Under the alternative condition, the operator $\Gamma X_*$ is quasi-nilpotent and, therefore, $I+\Gamma X_*$ is, indeed, invertible.

The remainder of the proof is analogous to that of Theorem \ref{baskth3.1}. 
\end{proof}

\section{Admissible triples in harmonious spaces}\label{basksec4}

In this section, we construct a specific family of admissible triples and restate the general theorems of the previous section for this family. Here, we pick a space of admissible perturbations from a class of certain $A$-harmonious spaces that is continuously embedded into the module $\mathcal{M}_0$ from Section \ref{basksec2}. Let us reiterate that all $A$-harmonious spaces in this section are assumed to be continuously and injectively embedded
into $\mathcal{M}_0$, even if we do not explicitly state this condition below.

\begin{definition}\label{adop}
Let $(\mathcal{M}, J, \Gamma)$ be an admissible triple for operator $A$ and assume that $\mathcal{M}$ is
an $A$-harmonious space  that is continuously and injectively embedded into the module $\mathcal{M}_0$. The space $\mathcal{M}$  is called $A$-admissible for the transforms  $J: \mathcal{M}\to\mathcal{M}$ and
$\Gamma: \mathcal{M}\to\mathrm{End}\,\mathcal{X}$ if
Condition 4 of Definition \ref{baskdef3.3} holds.
\end{definition}

We note that an $A$-harmonious space $\mathcal{M}$  is automatically $A$-admissible if $\Gamma: \mathcal{M\to M}$ is a bounded
operator and $\mathcal{M}$ is a Banach algebra. In particular, the module $\mathcal M_0$ itself is  $A$-admissible.

Let $\mathcal{M}$ be an $A$-harmonious space. We define the transforms $J =J_a: \mathcal{M}\to\mathcal{M}$ and
$\Gamma = \Gamma_a: \mathcal{M}\to\mathcal{M}$, $a>0$, via
\begin{equation}\label{bask4.1}
\Gamma X=\mathcal T_0(\psi_a)X \quad \mbox{and}\quad JX=\mathcal T_0(\varphi_a)X, \quad X\in\mathcal{M},
\end{equation}
where the functions $\varphi_a$ and $\psi_a$, $a>0$, were defined by \eqref{otrap} and \eqref{psia}, respectively. 
Then, if $\mathcal{M}$ is $A$-admissible for $J_a$ and $\Gamma_a$ for each $a>0$,
then it is an $A$-admissible module in the sense of \cite[Definition 3.2]{BK18c}. In particular, Condition (2) from
\cite[Definition 3.2]{BK18c} holds due to boundedness of the representation $\mathcal T_0: \mathbb{R}\to\mathrm{End}\,\mathcal{M}$,
and Condition (3) -- due to the fact that the constants $\gamma=\gamma_a$ in Condition 4 of Definition \ref{baskdef3.3} satisfy
\begin{equation}\label{galim}
\lim_{a\to \infty} \gamma_a \leqslant \lim_{a\to \infty}  \|\psi_a\|_1\leqslant \lim_{a\to \infty}  1.35/a = 0
\end{equation}
in view of the estimates \eqref{psiest}. 

\begin{definition}[\cite{BK18c}]\label{asdop}
An $A$-admissible space $\mathcal{M}$ is called strictly $A$-admissible if
$\mathcal T(t)X \in\mathcal M$ (and, consequently, $X\mathcal T(t) \in\mathcal M$) for all $t\in\mathbb R$ and $X\in\mathcal M$.
\end{definition}

Not all $A$-harmonious spaces are $A$-admissible and not all $A$-admissible spaces are strictly $A$-admissible (for the transforms $J$ and $\Gamma$ in \eqref{bask4.1}). 

\begin{theorem}\label{baskth4.1}
Let $\mathcal{M}$ be an $A$-admissible space for the transforms $J$ and $\Gamma$ given by \eqref{bask4.1}.
Then $(\mathcal{M}, J, \Gamma)$
is an admissible triple for the operator $A$.
\end{theorem}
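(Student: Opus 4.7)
The plan is to verify each of the five properties in Definition \ref{baskdef3.3}. Several of them are immediate from the standing assumptions, so the main task is to unpack what needs to be said and invoke the right preparatory results. Properties 1 and 4 are built into the hypothesis that $\mathcal M$ is $A$-admissible: Definition \ref{adop} gives the continuous injective embedding of $\mathcal M$ into $\mathfrak L_A(\mathcal X)$ (through $\mathcal M_0$) and stipulates property~4 directly. Property~2 is a triviality once one recalls that $J = \mathcal T_0(\varphi_a)$ and $\Gamma = \mathcal T_0(\psi_a)$ act as convolutions in the Banach $L_1(\mathbb R)$-module $(\mathcal M, \mathcal T_0)$, which yields the norm estimates $\|J\|\le \|\varphi_a\|_1$ and $\|\Gamma\|\le \|\psi_a\|_1$.

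The heart of the proof is property~3, and for this I would invoke Lemma \ref{basklh2.1} directly. One first checks that the designated functions satisfy its hypotheses: $\widehat{\varphi}_a = \tau_a$ equals $1$ on $[-a,a]$, and $\widehat{\psi}_a = \omega_a = (1-\tau_a)/\lambda$ by construction, with $\omega_a(0)=0$. Lemma \ref{basklh2.1}, applied to the Banach module $(\mathcal M_0, \mathcal T_0)$ into which $\mathcal M$ embeds, then yields the commutator identity
\[
A(\Gamma X)x - (\Gamma X)Ax = Xx - (JX)x,\qquad x\in D(A),
\]
for every $X\in\mathcal M$. The inclusion $(\Gamma X)D(A)\subseteq D(A)$ follows from this identity: for $x\in D(A)$, the right-hand side is a vector in $\mathcal X$ (since $X-JX \in \mathcal M \subseteq \mathrm{End}\,\mathcal X$), and $(\Gamma X)Ax \in \mathcal X$ for the same reason, so $A(\Gamma X)x$ exists as an element of $\mathcal X$, which by closedness of $A$ forces $(\Gamma X)x\in D(A)$.

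Finally, property~5 is a direct consequence of Lemma \ref{resgen}. Since the embedding $\mathcal M\hookrightarrow \mathrm{End}\,\mathcal X$ is continuous, any $X\in\mathcal M$ is in particular a bounded operator with $\|X\|\le C\|X\|_{\mathcal M}$ for a fixed constant. Choose any real $\alpha$ and write $\lambda_\varepsilon = \alpha+i\varepsilon$. Then
\[
\|X(A-\lambda_\varepsilon I)^{-1}\| \le \|X\|\cdot \|(A-\lambda_\varepsilon I)^{-1}\|,
\]
and the right factor tends to $0$ as $|\varepsilon|\to\infty$ by Lemma \ref{resgen}, giving the desired $\lambda_\varepsilon$.

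I do not anticipate a serious obstacle. The only subtle point is that one must confirm that Lemma \ref{basklh2.1} (stated for $\mathcal M_0$) can be used for an arbitrary $A$-admissible subspace $\mathcal M$: this is legitimate because $\mathcal M$ embeds continuously into $\mathcal M_0$ with a compatible $\mathcal T_0$-action, so the commutator identity computed in $\mathcal M_0$ transfers verbatim, and $\mathcal T_0(\varphi_a)X, \mathcal T_0(\psi_a)X$ remain in $\mathcal M$ by the module property of $\mathcal M$ under $\mathcal T_0$.
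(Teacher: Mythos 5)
Your proposal is correct and follows essentially the same route as the paper's own proof: both verify Conditions 1--5 of Definition \ref{baskdef3.3} one by one, deriving Condition 3 from Lemma \ref{basklh2.1}, Condition 5 from Lemma \ref{resgen}, Condition 2 from the estimates $\|J\|\le\|\varphi_a\|_1$ and $\|\Gamma\|\le\|\psi_a\|_1$, and Conditions 1 and 4 from the definition of $A$-admissibility and the embedding into $\mathcal M_0$. The extra details you supply (the transfer of Lemma \ref{basklh2.1} from $\mathcal M_0$ to $\mathcal M$, and the domain inclusion) are consistent with what the paper leaves implicit.
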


\begin{proof}
We need to check Conditions 1--5 of Definition \ref{baskdef3.3}.

Since
$\mathcal{M}$ is continuously embedded into $\mathcal{M}_0$,
Condition 1 follows.

Continuity of the transforms $J$ and $\Gamma$, which is required by Condition 2, is immediate from their definition: indeed, we have $\|J\|\leqslant \|\varphi_a\|_1$ and $\|\Gamma\|\leqslant \|\psi_a\|_1$. 

Condition 3 of Definition \ref{baskdef3.3} follows from Lemma \ref{basklh2.1}.

Condition 4 is built into the definition of $A$-admissibility of the space $\mathcal{M}$.

Finally, Condition 5 is implied by Lemma \ref{resgen}, and the theorem is proved.
\end{proof}

The above result allows us to formulate the following consequences of the general theorems in the previous section.

\begin{theorem}\label{baskth4.2}
There exists $a\in\mathbb{R}_+$ such that an operator $A-B$, $B\in\mathcal{M}$, is similar to the operator $A-\mathcal T_0(\varphi_a)X_*$, where the operator $X_*\in\mathcal{M}$
is the unique solution of the equation \eqref{bask3.1} that can be found via simple iterations.
\end{theorem}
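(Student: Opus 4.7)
The plan is to treat this theorem as essentially a direct corollary of Theorem \ref{baskth3.1} applied to the admissible triple constructed in Theorem \ref{baskth4.1}, with the parameter $a$ chosen large enough that the smallness condition \eqref{bask3.2} is satisfied.

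First, I would fix the family of admissible triples $(\mathcal{M}, J_a, \Gamma_a)$, $a > 0$, provided by Theorem \ref{baskth4.1}, where $J_a$ and $\Gamma_a$ are defined by \eqref{bask4.1}. To invoke Theorem \ref{baskth3.1}, I need to verify \eqref{bask3.2} for some choice of $a$. This reduces to controlling the two quantities $j_a = \max\{1, \|J_a\|\}$ and $\gamma_a$ appearing in \eqref{bask3.2}.

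For the first quantity, since the representation $\mathcal{T}_0: \mathbb R \to \mathrm{End}\,\mathcal M$ is isometric, I have $\|J_a\| = \|\mathcal T_0(\varphi_a)\|_{\mathcal M\to\mathcal M} \le \|\varphi_a\|_1$, and the remark following Lemma \ref{basklh2.2} gives $\|\varphi_a\|_1 \le \sqrt{3}$ uniformly in $a$. Hence $j_a$ is bounded by $\sqrt{3}$ independently of $a$. For the second quantity, the display \eqref{galim} already establishes $\gamma_a \to 0$ as $a \to \infty$, as an immediate consequence of the estimate $\|\psi_a\|_1 \le 1.35/a$ from \eqref{psiest}. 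Combining these two facts, $(3+2\sqrt 2)j_a\gamma_a\|B\|_{\mathcal M} \to 0$ as $a\to\infty$, so I can pick $a$ sufficiently large that \eqref{bask3.2} holds.

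With \eqref{bask3.2} verified for this choice of $a$, Theorem \ref{baskth3.1} applies verbatim to the triple $(\mathcal{M}, J_a, \Gamma_a)$ and the perturbation $B$: the transform $\Phi$ of \eqref{bask3.1} is a contraction on the ball $\mathcal B = \{X\in\mathcal M: \|X-B\|_{\mathcal M} \le \sqrt 2 \|B\|_{\mathcal M}\}$, producing a unique fixed point $X_* \in \mathcal{M}$ obtainable by the simple iteration scheme $X_1 = B$, $X_{n} = \Phi(X_{n-1})$, and the intertwining relation \eqref{bask3.3} gives similarity of $A-B$ to $A - J_a X_* = A - \mathcal T_0(\varphi_a) X_*$, which is exactly the desired conclusion.

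There is no real obstacle here; the substance of the argument is encapsulated in the earlier results, and the only point requiring attention is the quantitative check that $j_a \gamma_a$ can be made arbitrarily small by increasing $a$, which is handled by the uniform $L_1$-bound on $\varphi_a$ together with the $O(1/a)$ decay of $\|\psi_a\|_1$. If a more explicit estimate on the admissible value of $a$ were desired, one could simply take $a$ to satisfy $(3+2\sqrt 2)\sqrt 3\cdot(1.35/a)\cdot\|B\|_{\mathcal M} < 1$, though the statement as formulated requires only existence.
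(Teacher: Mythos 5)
Your proposal is correct and follows essentially the same route as the paper: the paper likewise derives Theorem \ref{baskth4.2} as an immediate corollary of Theorems \ref{baskth4.1} and \ref{baskth3.1}, using the uniform bound $\|J\|\leqslant\|\varphi_a\|_1\leqslant\sqrt{3}$ and the decay $\gamma_a\leqslant\|\psi_a\|_1\leqslant 1.35/a$ from \eqref{galim} to satisfy \eqref{bask3.2} for sufficiently large $a$.
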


Theorem \ref{baskth4.2} is an immediate corollary of Theorems \ref{baskth4.1} and
\ref{baskth3.1} in view of condition \eqref{galim}.
We note that the proof uses the inequalities $\|\Gamma\|\leqslant \|\psi_a\|_1\leqslant
1.35a^{-1}$ and $\|J\|\leqslant \|\varphi_a\|\leqslant \sqrt{3}$. Thus Condition \eqref{bask3.2} is satisfied for sufficiently large
 $a$.
\begin{remark}
Theorem \ref{baskth4.2} can also be deduced from Theorem 23.1 in \cite{B87}.
\end{remark}

The main advantage of studying the operator $A-\mathcal T_0(\varphi_a)X_*=A-JX_*$ instead of $A-B$ is the fact that the operator
$\mathcal T_0(\varphi_a)X_*$ has a compact Beurling spectrum $\Lambda(\mathcal T_0(\varphi_a)X_*, \mathcal T_0)$ (see
Lemma \ref{basklh2.1new} and Corollary \ref{cnormest1}).

\begin{corollary}\label{maincor}
An operator $A-B$, with $B\in\mathcal{M}$, is similar to an operator
$A-C$, where $C\in\mathcal{M}$ and $\Lambda(C,\mathcal T_0)$ is compact. Consequently, the function $\tau_{C}$
(see Definition \ref{baskdef2.3}) admits a holomorphic extension to an entire function of exponential type.
\end{corollary}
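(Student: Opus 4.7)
The plan is to apply Theorem \ref{baskth4.2} directly, and then read off the compactness of the Beurling spectrum from the specific form of the transform $J = J_a$.

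First, by Theorem \ref{baskth4.2}, one chooses $a \in \mathbb{R}_+$ large enough so that the admissibility condition \eqref{bask3.2} is satisfied (this is possible in view of \eqref{galim} and the estimate $\|J\| \le \|\varphi_a\|_1 \le \sqrt{3}$). This yields that $A - B$ is similar to $A - \mathcal{T}_0(\varphi_a) X_*$, with $X_* \in \mathcal{M}$ the unique fixed point of \eqref{bask3.1}. I would then set $C := \mathcal{T}_0(\varphi_a) X_*$; since $\mathcal{M}$ is continuously embedded in $\mathcal{M}_0$ and is itself a Banach $L_1(\mathbb{R})$-module under $\mathcal{T}_0$, we have $C \in \mathcal{M}$.

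Next, I would apply Lemma \ref{basklh2.1new}(4), invoked in the module $(\mathcal{M}_0, \mathcal{T}_0)$, to the element $X_* \in \mathcal{M} \hookrightarrow \mathcal{M}_0$ and the function $\varphi_a \in L_1(\mathbb{R})$. Since $\widehat{\varphi}_a = \tau_a$ is supported on $[-2a, 2a]$, this gives
\[
\Lambda(C, \mathcal{T}_0) \subseteq \supp \widehat{\varphi}_a \cap \Lambda(X_*, \mathcal{T}_0) \subseteq [-2a, 2a],
\]
so that $\Lambda(C, \mathcal{T}_0)$ is compact, as claimed.

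Finally, the ``consequently'' part is obtained by applying Corollary \ref{cnormest1} in the module $(\mathcal{M}_0, \mathcal{T}_0)$: since $C$ has compact Beurling spectrum contained in $[-2a, 2a]$, the associated orbit function $\tau_C(t) = \mathcal{T}_0(t) C$ from Definition \ref{baskdef2.3} admits a holomorphic extension to an entire function of exponential type, with the norm bound \eqref{cnormest2}. I do not anticipate any real obstacle; the entire corollary is essentially a bookkeeping consequence of Theorem \ref{baskth4.2}, the support property in Lemma \ref{basklh2.1new}(4), and Corollary \ref{cnormest1}. The one point requiring minor care is that these spectral results be applied in the module $(\mathcal{M}_0, \mathcal{T}_0)$ rather than in $(\mathcal{X}, \mathcal{T})$, but this is legitimate by the discussion preceding Lemma \ref{basknewlh2.2}, which established that $\mathcal{M}_0$ is a non-degenerate Banach $L_1(\mathbb{R})$-module with the required structure.
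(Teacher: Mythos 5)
Your proposal is correct and follows essentially the same route as the paper, which derives the corollary from Theorem \ref{baskth4.2} by taking $C=\mathcal T_0(\varphi_a)X_*$, noting via Lemma \ref{basklh2.1new}(4) that $\Lambda(C,\mathcal T_0)\subseteq\supp\widehat\varphi_a=[-2a,2a]$, and then invoking Corollary \ref{cnormest1} in the module $(\mathcal M_0,\mathcal T_0)$. Your additional remarks (that $C\in\mathcal M$ because $\mathcal M$ is itself an $L_1(\mathbb R)$-module under $\mathcal T_0$, and that the spectral lemmas are applied in $(\mathcal M_0,\mathcal T_0)$ rather than $(\mathcal X,\mathcal T)$) are exactly the right bookkeeping points.
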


The above corollary can be used to prove important results on spectral invariance of operators $A-B$, such as \cite[Theorem 3.4]{BK18c}, a reformulation of which is presented next. 
By $\sigma_{\mathcal M}(X)$, $X\in \mathfrak{L}_A(\mathcal{X})$, we denote the set
$
\sigma_{\mathcal M}(X) = \{\lambda \in\mathbb C: (X-\lambda I)^{-1}\notin\mathcal M\}.
$
The formulation below uses the notion of $A$-substantial spaces that can be found in \cite[Definition 2.15]{BK18c}. A module $\mathcal M$ is $A$-substantial
if it contains all operators with exponential memory decay (see \cite{BK14}), or, equivalently, all Jaffard-2 classes $\mathfrak F_\nu^2(\mathcal{M}_0)$ with an exponential weight $\nu$ (see Subsection \ref{jafclass}).

\begin{theorem}\label{baskth4.3}
Let $\mathcal M$ be an $A$-substantial $A$-admissible space  for transforms $J$ and $\Gamma$ in \eqref{bask4.1}. Then $
\sigma(A-B) = \sigma_{\mathcal M}(A-B),$
$B\in\mathcal{M}$.
\end{theorem}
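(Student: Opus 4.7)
The inclusion $\sigma(A-B)\subseteq\sigma_{\mathcal M}(A-B)$ is immediate from the definitions, since $\mathcal M\subseteq\mathfrak L_A(\mathcal X)$ means that non-invertibility in $\mathrm{End}\,\mathcal X$ forces non-membership in $\mathcal M$. The substance of the theorem is the reverse inclusion: for every $\lambda\in\rho(A-B)$ one must show $(A-B-\lambda I)^{-1}\in\mathcal M$. My plan is to reduce, via the method of similar operators, to the case of a perturbation with compact Beurling spectrum, and then to place the resolvent of the reduced operator in an exponentially weighted Jaffard-$2$ class that embeds into $\mathcal M$ by $A$-substantiality.

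For the reduction, pick $a>0$ so large that \eqref{bask3.2} is satisfied, and set $\Gamma X_*=\mathcal T_0(\psi_a)X_*\in\mathcal M$, $C=\mathcal T_0(\varphi_a)X_*\in\mathcal M$, and $U=I+\Gamma X_*\in\mathrm{End}\,\mathcal X$. Theorem \ref{baskth4.2} delivers $\rho(A-B)=\rho(A-C)$ together with
$$(A-B-\lambda I)^{-1} = U(A-C-\lambda I)^{-1}U^{-1},\qquad \lambda\in\rho(A-B).$$
Iterating the admissibility inequality of Definition \ref{baskdef3.3}(4) shows that $U^{-1}-I\in\mathcal M$ (as a Neumann series in $\Gamma X_*$, whose convergence in the $\mathcal M$-norm is ensured by \eqref{bask3.2}), and that left and right multiplication of an element of $\mathcal M$ by $U$ or $U^{-1}$ preserves $\mathcal M$. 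It therefore suffices to prove $(A-C-\lambda I)^{-1}\in\mathcal M$ for every $\lambda\in\rho(A-C)$. The decisive new feature is that by Lemma \ref{basklh2.1new}(4) we have $\Lambda(C,\mathcal T_0)\subseteq\supp\widehat\varphi_a\subseteq[-2a,2a]$, which is compact, so Corollary \ref{cnormest1}, applied inside $(\mathcal M_0,\mathcal T_0)$, extends $t\mapsto\mathcal T_0(t)C$ to an entire function of exponential type at most $2a$.

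For $|\Im\lambda|$ large, $\|C(A-\lambda I)^{-1}\|<1$ by Lemma \ref{resgen}, so the Neumann series
$$(A-C-\lambda I)^{-1} = \sum_{n=0}^{\infty} (A-\lambda I)^{-1}\bigl[C(A-\lambda I)^{-1}\bigr]^n$$
converges in $\mathrm{End}\,\mathcal X$. Each $(A-\lambda I)^{-1}$ is memoryless, since it commutes with every $\mathcal T(t)$, so Lemma \ref{basknewlh2.2}(2) locates the $n$-th term's Beurling spectrum inside $[-2an,2an]$; combined with the entire-extension bound above, this yields an estimate of the $n$-th term in a Jaffard-$2$ norm $\mathfrak F^2_\nu(\mathcal M_0)$ with a suitable exponential weight $\nu$, provided $|\Im\lambda|$ is chosen large enough to dominate the growth of $\nu$. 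The $A$-substantial hypothesis then embeds $\mathfrak F^2_\nu(\mathcal M_0)$ into $\mathcal M$, giving $(A-C-\lambda I)^{-1}\in\mathcal M$ for such $\lambda$. The resolvent identity, holomorphy of the resolvent, and closedness of $\mathcal M$ under the Banach-algebra product (inherited from the admissibility structure) propagate this membership from the initial half-plane to all of $\rho(A-C)$ by a connectedness argument along a path in $\rho(A-C)$. The main obstacle will be the quantitative step of the previous paragraph: producing a bound on $\|[C(A-\lambda I)^{-1}]^n\|_{\mathfrak F^2_\nu(\mathcal M_0)}$ that beats the exponential growth $\nu(2an)$, which is where both the compactness of $\Lambda(C,\mathcal T_0)$ provided by the trapezoid function $\varphi_a$ and the $A$-substantiality of $\mathcal M$ are essential.
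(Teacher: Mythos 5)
Your reduction to the operator $A-C$ with $\Lambda(C,\mathcal T_0)\subseteq\operatorname{supp}\widehat\varphi_a\subseteq[-2a,2a]$, and the treatment of $U=I+\Gamma X_*$ and $U^{-1}$ via a Neumann series controlled by Definition \ref{baskdef3.3}(4), follow the same route as the paper. The divergence is at the key inclusion $(A-C-\lambda I)^{-1}\in\mathcal M$: the paper defers this entirely to the results of \cite{BK14} on resolvents of operators with compact Beurling spectrum (this is exactly what the $A$-substantiality hypothesis is invoked for), whereas you try to prove it. The first half of your attempt is in fact fine: by \eqref{fzf} one has $\|(A-\lambda I)^{-1}\|\le |\Im\lambda|^{-1}$, each resolvent of $A$ is memoryless, Lemma \ref{basknewlh2.2}(2) places the $n$-th Neumann term's spectrum in $[-2an,2an]$, and taking $|\Im\lambda|$ large compared with $\gamma^{2a}\|C\|$ makes the series converge in $\mathfrak F^2_\nu(\mathcal M_0)$, $\nu(t)=\gamma^{|t|}$. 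So the ``main obstacle'' you flag at the end is not the real obstacle.

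The genuine gap is the final propagation from the half-plane to all of $\rho(A-C)$ ``by a connectedness argument along a path.'' The set $\Omega=\{\lambda\in\rho(A-C): (A-C-\lambda I)^{-1}\in\mathcal M\}$ may be open in $\rho(A-C)$, but nothing you have said makes it relatively closed: the disk around $\lambda_0\in\Omega$ that the resolvent identity controls has radius comparable to $\bigl(\gamma\,\|(A-C-\lambda_0 I)^{-1}\|_{\mathcal M}\bigr)^{-1}$, and the $\mathcal M$-norm of the resolvent could a priori blow up at an interior point of $\rho(A-C)$, so the chain of Neumann disks along your path can stall before reaching a given $\lambda$. Ruling this out is precisely the content of spectral invariance; it cannot be extracted from the resolvent identity plus connectedness. (A secondary issue: Definition \ref{baskdef3.3}(4) only controls products of the form $X\Gamma Y$ and $(\Gamma X)Y$, so $\mathcal M$ need not even be a Banach algebra under plain composition --- the paper notes that Jaffard classes with quasi-subconvolutive weights are $A$-admissible but not necessarily algebras --- which undermines even the openness step for the powers $R(\lambda_0)^{n+1}$.) The missing ingredient is a pointwise estimate, valid at every $\lambda\in\rho(A-C)$, giving exponential decay of $t\mapsto\|\mathcal T_0(\phi_t)(A-C-\lambda I)^{-1}\|$; in \cite{BK14} this comes from the commutator identity $[A]R=CR-RC$ for $R=(A-C-\lambda I)^{-1}$ (the $\lambda$- and $I$-terms cancel) combined with invertibility of $[A]$ on spectral submodules supported near large $|t|$ and the holomorphic extension of Corollary \ref{cnormest1}, all exploiting compactness of $\Lambda(C,\mathcal T_0)$. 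Without that input your argument proves the theorem only for $\lambda$ in a half-plane.
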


\begin{proof}
Despite certain discrepancies in the definitions between \cite{BK18c} and the current paper, the proof of \cite[Theorem 3.4]{BK18c} applies without any substantial changes. We present the outline of the proof for completeness. 

The similarity between operators in Corollary \ref{maincor} implies that for $\lambda \in\rho(A-B)$ we have
\[
(A-B-\lambda I)^{-1} = (I+\Gamma X_*)^{-1}(A-C-\lambda I)^{-1}(I+\Gamma X_*).
\]
We then have $I+\Gamma X_*\in\mathcal M$ because $X_*\in \mathcal M$ and $(I+\Gamma X_*)^{-1}\in\mathcal M$ because $\|\Gamma X_*\|_{\mathcal M} < 1$.
Inclusion $(A-C-\lambda I)^{-1}\in\mathcal M$ is implied by compactness of $\Lambda(C,\mathcal T_0)$ 
and the results of \cite{BK14}, which are applicable since the module $\mathcal M$ is $A$-substantial. Finally, we use $A$-admissibility of the module $\mathcal M$ and the representation of $(I+\Gamma X_*)^{-1}$as a Neumann series, to conclude that $(A-B-\lambda I)^{-1}\in\mathcal M$.
\end{proof}

Spectral invariance results such as the one above have been used extensively in various areas of analysis and applications. For example, in differential equations, such results lead to existence of solutions in a certain class \cite[and references therein]{B03, BK14, BSS19}, whereas in frame theory, spectral invariance is the key tool for studying localization of dual frames \cite[and references therein]{ABK08, BCHL06I, BCKOR14, BK10,  G04, SS09}. 

As in the previous section, we proceed to present stronger similarity results that can be obtained under additional conditions.

\begin{theorem}\label{baskth5.1}
Assume that
\begin{equation}\label{specond}
\Lambda(\mathcal {M, T}_0)\cap\left((-2a,-a)\cup(a, 2a)\right)=\varnothing,
\end{equation}
and the number $a$ and the operator $B$ yield \eqref{bask3.5}, i.e.
\begin{equation}\label{prob5.4}
\frac{5.4\sqrt{3}}{a}\|B\|_{\mathcal M}<1.
\end{equation}
Then the operator $A-B$ is similar to the operator $A-JX_*=A-\mathcal T_0(\varphi_a)X_*$, $X_*\in\mathcal{M}$, where the operator $JX_*$ satisfies  $\Lambda(JX_*,\mathcal T_0)\subseteq [-a,a]\cap \Lambda(\mathcal {M, T}_0)$. In particular, if $\Lambda(\mathcal {M, T}_0)\cap[-a,a] = \{0\}$, then $JX_*$ is memoryless. As before $X_*$ is the unique solution of the equation \eqref{bask3.1} and the intertwining operator of $A-B$ into
$A-JX_*$ is  the invertible operator $A-\mathcal T_0(\psi_a)X_*$.
\end{theorem}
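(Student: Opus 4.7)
The plan is to realize $(\mathcal M,J,\Gamma)$ with $J$ and $\Gamma$ from \eqref{bask4.1} as an admissible triple for $A$ and apply Theorem~\ref{baskth3.2}; the improved bound on $\Lambda(JX_*,\mathcal T_0)$ will then drop out of a Fourier-multiplier calculation that leverages the gap condition \eqref{specond}. Theorem~\ref{baskth4.1} supplies admissibility, and the estimates $\|J\|\le\|\varphi_a\|_1\le\sqrt 3$ together with $\gamma\le\|\psi_a\|_1\le 1.35/a$ translate the hypothesis \eqref{bask3.5} of Theorem~\ref{baskth3.2} into \eqref{prob5.4}.

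I would first verify that $J=\mathcal T_0(\varphi_a)$ is idempotent. The multiplier of $J^2-J$ is $\tau_a(\tau_a-1)$, which is supported in $[-2a,-a]\cup[a,2a]$ and vanishes at $\pm a,\pm 2a$. By \eqref{specond}, the intersection of this support with $\Lambda(X,\mathcal T_0)\subseteq\Lambda(\mathcal M,\mathcal T_0)$ lies inside the four-point set $\{\pm a,\pm 2a\}\cap\Lambda(\mathcal M,\mathcal T_0)$, on which the multiplier is zero, so Lemma~\ref{basklh2.1'new}($5^\prime$) yields $J^2X=JX$. The same argument applied to the multiplier $\tau_a\omega_a$ of $J\Gamma=\mathcal T_0(\varphi_a*\psi_a)$ (which vanishes at $\pm a$ because $\omega_a(\pm a)=0$, and at $\pm 2a$ because $\tau_a(\pm 2a)=0$) delivers the auxiliary identity $J\Gamma=0$ on $\mathcal M$. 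Testing $J\Gamma=0$ against $f=\varphi_a$ in the definition of the Beurling spectrum upgrades the inclusion for $\Lambda(\Gamma X,\mathcal T_0)$ to $(-\infty,-2a]\cup[2a,\infty)$, and a symmetric bump-test at $\pm 2a$ (with support width smaller than $a$) refines $\Lambda(JY,\mathcal T_0)$ to a subset of $[-a,a]\cap\Lambda(\mathcal M,\mathcal T_0)$.

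The main technical obstacle is condition \eqref{dopusl}, namely $J((\Gamma X)(JY))=0$. Combining Lemma~\ref{basknewlh2.2}(2) with the previous spectral bounds gives
\[
\Lambda((\Gamma X)(JY),\mathcal T_0)\subseteq\overline{\Lambda(\Gamma X,\mathcal T_0)+\Lambda(JY,\mathcal T_0)}\cap\Lambda(\mathcal M,\mathcal T_0)\subseteq \bigl(\{\pm a\}\cup(-\infty,-2a]\cup[2a,\infty)\bigr)\cap\Lambda(\mathcal M,\mathcal T_0),
\]
and applying $J=\mathcal T_0(\varphi_a)$ via Lemma~\ref{basklh2.1new}(4) cuts this down to the at most four points $\{\pm a,\pm 2a\}\cap\Lambda(\mathcal M,\mathcal T_0)$. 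Lemma~\ref{basklh2.1'new}($5^\prime$) handles $\pm 2a$ immediately (since $\tau_a(\pm 2a)=0$); the delicate part is excluding the frequencies $\pm a$, where $\tau_a=1$, which requires a finer test-function argument that exploits the operator-product structure of $(\Gamma X)(JY)$ rather than the crude Minkowski sum bound. This is where the gap condition is used in full strength.

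With \eqref{dopusl} and idempotency of $J$ established, Theorem~\ref{baskth3.2} produces the unique fixed point $X_*\in\mathcal M$ of \eqref{bask3.1} and the similarity $(A-B)(I+\Gamma X_*)=(I+\Gamma X_*)(A-JX_*)$ with invertible intertwining operator $I+\Gamma X_*$. For the spectral inclusion, Lemma~\ref{basklh2.1new}(4) gives $\Lambda(JX_*,\mathcal T_0)\subseteq[-2a,2a]\cap\Lambda(\mathcal M,\mathcal T_0)$; by \eqref{specond} this lies in $[-a,a]\cup\{\pm 2a\}$, and a bump-test at $\pm 2a$ (of width less than $a$), combined with $\tau_a(\pm 2a)=0$ and Lemma~\ref{basklh2.1'new}($5^\prime$), removes the boundary points to yield $\Lambda(JX_*,\mathcal T_0)\subseteq[-a,a]\cap\Lambda(\mathcal M,\mathcal T_0)$. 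The memoryless conclusion under $\Lambda(\mathcal M,\mathcal T_0)\cap[-a,a]=\{0\}$ is then immediate from Lemma~\ref{comlemma}.
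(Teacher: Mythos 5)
Your overall route is the paper's: verify that $J=\mathcal T_0(\varphi_a)$ is an idempotent and that \eqref{dopusl} holds, then feed the triple from Theorem~\ref{baskth4.1} into Theorem~\ref{baskth3.2}, with $\|J\|\le\sqrt3$ and $\gamma\le 1.35/a$ converting \eqref{bask3.5} into \eqref{prob5.4}. The idempotency argument (the multiplier $\tau_a(\tau_a-1)$ is supported in $[-2a,-a]\cup[a,2a]$ and vanishes on the at most four points of $\Lambda(\mathcal M,\mathcal T_0)$ in that set, so Lemma~\ref{basklh2.1'new}($5^\prime$) applies), the identity $J\Gamma=0$, the refinements $\Lambda(\Gamma X,\mathcal T_0)\subseteq(-\infty,-2a]\cup[2a,\infty)$ and $\Lambda(JY,\mathcal T_0)\subseteq[-a,a]\cap\Lambda(\mathcal M,\mathcal T_0)$, and the final spectral inclusion for $JX_*$ are all correct.

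The gap is exactly where you flag it: condition \eqref{dopusl} at the frequencies $\pm a$. You defer this to ``a finer test-function argument that exploits the operator-product structure,'' but you never supply one, and in fact none exists under \eqref{specond} as written. Take $\mathcal X=\ell_2(\mathbb Z)$ with the modulation representation of Example~\ref{exa5}, $\mathcal M=\mathcal M_0$, $a=1$; then $\Lambda(\mathcal M,\mathcal T_0)\subseteq\mathbb Z$ and \eqref{specond} holds. If $X$ is supported on the second diagonal and $Y$ on the $(-1)$-st, they are $\mathcal T_0$-eigenvectors at frequencies $2$ and $-1$, so $\Gamma X=\tfrac12X$, $JY=Y$, and $(\Gamma X)(JY)$ is an eigenvector at frequency $1$, on which $J$ acts as the identity; hence $J((\Gamma X)(JY))\neq0$. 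So \eqref{dopusl} genuinely fails whenever $\Lambda(\mathcal M,\mathcal T_0)$ meets the endpoints $\pm a$, $\pm2a$ --- which the open intervals in \eqref{specond} permit. (The paper's own one-line proof asserts the same implication, so you have located a real soft spot; but an acknowledged deferral is not a proof.) To close the argument you must either strengthen the hypothesis to $\Lambda(\mathcal M,\mathcal T_0)\cap\left([-2a,-a]\cup[a,2a]\right)=\varnothing$ --- then $\Lambda((\Gamma X)(JY),\mathcal T_0)$ misses $\supp\tau_a$ entirely and \eqref{dopusl} follows from Lemma~\ref{basklh2.1new}(5); this is exactly the situation under \eqref{bask*} in Section~\ref{basksec6} --- or abandon Theorem~\ref{baskth3.2} and invoke Theorem~\ref{baskth3.1}, which needs neither idempotency of $J$ nor \eqref{dopusl} but replaces the constant $4$ in \eqref{bask3.5} by $3+2\sqrt2$, i.e.\ forces a weakening of \eqref{prob5.4}.
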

\begin{proof}
Condition \eqref{specond} via Lemmas \ref{basklh2.1'new} and \ref{basknewlh2.2} guarantees that the transform $J = \mathcal T_0(\varphi_a)$ is an idempotent in $\mathrm{End}\,\mathcal{M}$ and that condition \eqref{dopusl} holds. Therefore, it remains to apply Theorems \ref{baskth4.1} and \ref{baskth3.2} to complete the proof.
\end{proof}

\begin{corollary}\label{baskth5.c1}
Assume that $\mathcal {M =  P}_\omega$ is the space of periodic operators from Definition \ref{app}, where $\omega\le\pi/ a$,
and condition \eqref{prob5.4} holds. Then the operator $A-B$, $B\in\mathcal{P}_\omega$, is similar to the operator $A-JX_*$, where
$X_*\in\mathcal{P}_\omega$ is the unique solution of the nonlinear equation \eqref{bask3.1} and the operator $JX_*$ is memoryless.
\end{corollary}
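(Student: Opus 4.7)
The plan is to deduce this corollary directly from Theorem~\ref{baskth5.1} applied to $\mathcal M = \mathcal P_\omega$. What needs to be checked is that $\mathcal P_\omega$ satisfies the hypotheses of Theorem~\ref{baskth5.1}, namely that it is an $A$-admissible space for the transforms $J = \mathcal T_0(\varphi_a)$ and $\Gamma = \mathcal T_0(\psi_a)$, and that the spectral condition \eqref{specond} together with the strengthened condition $\Lambda(\mathcal P_\omega, \mathcal T_0)\cap[-a,a]\subseteq\{0\}$ holds.

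First I would recall from Section~\ref{subsec3.2} that $\mathcal P_\omega$ is a closed submodule of $\mathcal M_0$ and a Banach algebra, hence an $A$-harmonious space continuously embedded into $\mathcal M_0$. Because the transforms $\mathcal T_0(\varphi_a)$ and $\mathcal T_0(\psi_a)$ map $\mathcal P_\omega$ into itself (submodules are invariant under the action of $L_1$) and $\mathcal P_\omega$ is a Banach algebra, the products $X\Gamma Y$ and $(\Gamma X)Y$ remain in $\mathcal P_\omega$ with the required norm estimates. Thus $\mathcal P_\omega$ is $A$-admissible for $J_a$ and $\Gamma_a$ in the sense of Definition~\ref{adop}, and Theorem~\ref{baskth4.1} applies.

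Next I would verify the spectral conditions. By the remark after Definition~\ref{app}, every $x\in\mathcal P_\omega$ has $\Lambda(x)\subseteq\{2\pi n/\omega:\, n\in\mathbb Z\}$, so
\[
\Lambda(\mathcal P_\omega,\mathcal T_0)\subseteq \left\{\tfrac{2\pi n}{\omega}:\, n\in\mathbb Z\right\}.
\]
The hypothesis $\omega\le\pi/a$ gives $2\pi/\omega\ge 2a$, so every nonzero element of $\Lambda(\mathcal P_\omega,\mathcal T_0)$ has absolute value at least $2a$. Consequently $\Lambda(\mathcal P_\omega,\mathcal T_0)\cap\bigl((-2a,-a)\cup(a,2a)\bigr)=\varnothing$, which is exactly \eqref{specond}, and moreover $\Lambda(\mathcal P_\omega,\mathcal T_0)\cap[-a,a]\subseteq\{0\}$.

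Finally, with \eqref{prob5.4} assumed, Theorem~\ref{baskth5.1} produces the unique fixed point $X_*\in\mathcal P_\omega$ of \eqref{bask3.1} and establishes similarity of $A-B$ to $A-JX_*$ with $\Lambda(JX_*,\mathcal T_0)\subseteq[-a,a]\cap\Lambda(\mathcal P_\omega,\mathcal T_0)\subseteq\{0\}$. By Lemma~\ref{comlemma}, the latter containment means $JX_*$ is memoryless, which completes the corollary. No step is really an obstacle here; the only mild subtlety is keeping track of why $\omega\le\pi/a$ (as opposed to $\omega<2\pi/a$) is the right bound, which is transparent once one observes that the gap condition \eqref{specond} excludes frequencies in $(a,2a)$ rather than in $(0,2a)$.
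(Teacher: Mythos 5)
Your proposal is correct and matches the paper's intended argument: the corollary is stated as an immediate consequence of Theorem~\ref{baskth5.1}, and your verification that $\mathcal P_\omega$ is $A$-admissible (being a Banach algebra and closed submodule) and that $\omega\le\pi/a$ forces $\Lambda(\mathcal P_\omega,\mathcal T_0)\subseteq\{2\pi n/\omega\}\subseteq\{0\}\cup\{|\lambda|\ge 2a\}$, hence \eqref{specond} and $\Lambda(\mathcal P_\omega,\mathcal T_0)\cap[-a,a]=\{0\}$, is exactly the needed check. The arithmetic $2\pi/\omega\ge 2a$ and the conclusion of memorylessness via Lemma~\ref{comlemma} are both right.
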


Theorems \ref{baskth3.2c} and \ref{baskth4.1} immediately yield the following result.

\begin{corollary}\label{baskth5.c2}
In addition to the assumptions of Theorem \ref{baskth5.1}, suppose that $\mathcal T_0(\varphi_a)B = 0$ and
\begin{equation}\label{prob1}
\frac{4.05\sqrt{3}}{a}\|B\|_{\mathcal M}<1.
\end{equation}
Then the operator $A-B$, $B\in\mathcal{M}$, is similar to the operator $A-\mathcal{T}_0(\varphi_a)X_*=A-JX_*$, $X_*\in\mathcal{M}$,
where $JX_*$ satisfies $\Lambda(JX_*, \mathcal{T}_0)\subseteq [-a, a]\cap \Lambda(\mathcal{M}, \mathcal{T}_0)$.
In particular, if $\Lambda(\mathcal{M}, \mathcal{T}_0)\cap [-a, a]=\{0\}$, then $JX_*$ is memoryless.
\end{corollary}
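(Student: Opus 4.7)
The plan is to verify that all hypotheses of Theorem~\ref{baskth3.2c} are satisfied in the setting described, having first invoked Theorem~\ref{baskth4.1} to ensure we have a genuine admissible triple. Concretely, I would start by setting $J = J_a = \mathcal T_0(\varphi_a)$ and $\Gamma = \Gamma_a = \mathcal T_0(\psi_a)$, and noting that Theorem~\ref{baskth4.1} already yields that $(\mathcal M, J, \Gamma)$ is an admissible triple for $A$ whenever $\mathcal M$ is $A$-admissible for these transforms (which it is, since we are inheriting the framework of Theorem~\ref{baskth5.1}).

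Next, I would invoke the spectral gap assumption \eqref{specond} inherited from Theorem~\ref{baskth5.1}. Via Lemmas~\ref{basklh2.1'new} and \ref{basknewlh2.2}, this assumption forces $J = \mathcal T_0(\varphi_a)$ to act as an idempotent on $\mathcal M$ (since $\widehat{\varphi_a}(\widehat{\varphi_a}-1)=\tau_a(\tau_a-1)$ vanishes on $\Lambda(\mathcal M, \mathcal T_0)$), and simultaneously guarantees \eqref{dopusl}: for any $X,Y\in\mathcal M$, $JY$ has Beurling spectrum in $[-a,a]\cap\Lambda(\mathcal M,\mathcal T_0)$, $\Gamma X\cdot JY$ has spectrum contained in $(\Lambda(\mathcal M,\mathcal T_0)\setminus[-a,a]) + ([-a,a]\cap\Lambda(\mathcal M,\mathcal T_0))$ up to closure by Lemma~\ref{basknewlh2.2}(2), and applying $J$ once more kills this by \eqref{specond}. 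The hypothesis $\mathcal T_0(\varphi_a)B=0$ is precisely the condition $JB=0$ required by Theorem~\ref{baskth3.2c}.

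It remains to check the smallness condition $3j\gamma\|B\|_{\mathcal M}<1$. Using the explicit bounds $\|J\|\le\|\varphi_a\|_1\le\sqrt 3$ (so $j\le\sqrt 3$) and $\gamma\le\|\psi_a\|_1\le 1.35/a$ from \eqref{psiest}, we have
\[
3j\gamma\|B\|_{\mathcal M}\le 3\cdot\sqrt 3\cdot\frac{1.35}{a}\|B\|_{\mathcal M}=\frac{4.05\sqrt 3}{a}\|B\|_{\mathcal M},
\]
so the postulated inequality \eqref{prob1} is exactly what is needed. Theorem~\ref{baskth3.2c} then yields the fixed point $X_*\in\mathcal M$ of $\Phi_2$ and the similarity $A-B\sim A-JX_*$ with intertwining operator $I+\Gamma X_*$.

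Finally, for the spectral statement, $JX_*=\mathcal T_0(\varphi_a)X_*\in\mathcal M$ implies $\Lambda(JX_*,\mathcal T_0)\subseteq \Lambda(\mathcal M,\mathcal T_0)$, while Lemma~\ref{basklh2.1new}(4) gives $\Lambda(JX_*,\mathcal T_0)\subseteq\supp\widehat{\varphi_a}=[-2a,2a]$; combining with \eqref{specond} yields $\Lambda(JX_*,\mathcal T_0)\subseteq[-a,a]\cap\Lambda(\mathcal M,\mathcal T_0)$. The memorylessness in the special case $\Lambda(\mathcal M,\mathcal T_0)\cap[-a,a]=\{0\}$ is then immediate from Lemma~\ref{comlemma}. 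There is no real obstacle here --- the work is purely bookkeeping, and the only mildly delicate step is matching the numerical constant $4.05\sqrt 3$ in \eqref{prob1} against the generic condition $3j\gamma\|B\|_{\mathcal M}<1$ through the explicit $L_1$ estimates.
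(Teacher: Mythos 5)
Your proposal is correct and follows exactly the route the paper intends: the paper's entire justification is the one-line remark that Theorems \ref{baskth3.2c} and \ref{baskth4.1} immediately yield the corollary, and you have simply filled in the same steps — idempotency of $J$ and condition \eqref{dopusl} from \eqref{specond} via Lemmas \ref{basklh2.1'new} and \ref{basknewlh2.2} (as in the proof of Theorem \ref{baskth5.1}), the identification $JB=\mathcal T_0(\varphi_a)B=0$, and the constant $3\cdot\sqrt3\cdot 1.35/a=4.05\sqrt3/a$ from the bounds $\|\varphi_a\|_1\le\sqrt3$ and \eqref{psiest}. No discrepancy with the paper's argument.
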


Condition \eqref{specond} in Theorem \ref{baskth5.1} is difficult to check in practice. In view of the containment
$\Lambda(\mathcal {M, T}_0)\subseteq\Lambda(\mathcal {M}_0,\mathcal{ T}_0)$, which holds in any $A$-admissible space
$\mathcal {M}$, we can use Property 3 of Lemma \ref{basknewlh2.2} to obtain a condition that is stronger but is easy to verify.

\begin{propos}\label{baskth5.c}
Condition \eqref{specond}  in Theorem \ref{baskth5.1} is implied by
\begin{equation}\label{speconda}
 (\sigma(A)-\sigma(A)) \cap\left((-2a,-a)\cup(a, 2a)\right)=\varnothing.
 \end{equation}
\end{propos}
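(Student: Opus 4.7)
The plan is to chain the inclusions
\[
\Lambda(\mathcal{M},\mathcal{T}_0)\;\subseteq\;\Lambda(\mathcal{M}_0,\mathcal{T}_0)\;\subseteq\;\overline{\sigma(A)-\sigma(A)}
\]
and then to apply a routine topological observation to replace the closure on the right by the set itself. The first inclusion is a direct consequence of the continuous and injective embedding $\mathcal{M}\hookrightarrow\mathcal{M}_0$ that is standing throughout Section \ref{basksec4}: any $f\in L_1$ annihilating all of $\mathcal{M}_0$ a fortiori annihilates $\mathcal{M}$, so Definition \ref{baskdef2.7} delivers it; this containment is, in fact, already noted in the paragraph preceding the proposition.

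For the second inclusion, I would globalize property 3 of Lemma \ref{basknewlh2.2}, which yields the pointwise bound $\Lambda(F,\mathcal{T}_0)\subseteq\overline{\sigma(A)-\sigma(A)}$ for each $F\in\mathcal{M}_0$. The passage from individual operators to the full module rests on the standard identity $\Lambda(Y,\mathcal{T})=\overline{\bigcup_{y\in Y}\Lambda(y,\mathcal{T})}$ valid for any subspace $Y$: the inclusion $\supseteq$ is immediate from Definition \ref{baskdef2.7}, while $\subseteq$ follows from property 5 of Lemma \ref{basklh2.1new} by testing against an $f$ whose Fourier transform is supported off the right-hand closed set and nonzero at any putative $\lambda$ outside it.

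The final step is the elementary observation that if $U$ is open and $U\cap S=\varnothing$, then $U\cap\overline{S}=\varnothing$ as well (because $S\subseteq U^c$ and $U^c$ is closed). Applied with $U=(-2a,-a)\cup(a,2a)$ and $S=\sigma(A)-\sigma(A)$, hypothesis \eqref{speconda} upgrades to disjointness of $U$ from $\overline{\sigma(A)-\sigma(A)}$, and the chained inclusion above then delivers \eqref{specond}. There is no genuine obstacle in the argument; the only subtle point worth emphasizing is that the closure produced in Lemma \ref{basknewlh2.2}(3) is exactly what makes the openness of the two gap intervals essential to the implication.
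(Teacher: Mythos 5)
Your argument is correct and is essentially the one the paper intends: the text immediately preceding the proposition already points to the chain $\Lambda(\mathcal M,\mathcal T_0)\subseteq\Lambda(\mathcal M_0,\mathcal T_0)\subseteq\overline{\sigma(A)-\sigma(A)}$ via Lemma \ref{basknewlh2.2}(3), and your final step (an open set disjoint from $S$ is disjoint from $\overline S$) is exactly the observation needed to pass from \eqref{speconda} to \eqref{specond}. Your explicit justification of the monotonicity $\Lambda(\mathcal M,\mathcal T_0)\subseteq\Lambda(\mathcal M_0,\mathcal T_0)$ and of the passage from single operators to the whole module is a welcome filling-in of details the paper leaves implicit, but it is not a different route.
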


\begin{remark}
Observe that condition \eqref{speconda}  holds, for example, when   $\sigma(A)=\bigcup_{n\in\mathbb N} \sigma_n$ and the spectral components  $\sigma_n$ satisfy
 \[
 \mathrm{diam}\, (\sigma_n) \le a \quad\mbox{and}\quad \mathrm{dist}\, (\sigma_m, \sigma_n) \ge 2a,
 \quad m,n\in\mathbb N,\ m\neq n.
 \]
If the spectral components $\sigma_n$ are singletons, Proposition \ref{baskth5.c} yields similarity of the perturbed operator
 $A-B$ to a memoryless operator. We note that in this case memoryless operators have a block diagonal matrix (see \cite{BK05, BKU19}). 
\end{remark}

\begin{remark}
Looking back at Lemma \ref{basklh2.1}, observe that to apply the method of similar operators we need equality \eqref{bask2.1} to hold 
only on the Beurling spectrum $\Lambda(\mathcal{M}, \mathcal T_0)$; it need not hold for all $\lambda\in\mathbb{R}\setminus\{0\}$. Thus, if $\Lambda(\mathcal{M}, \mathcal T_0)\cap(-a,a)
\subseteq \{0\}$, the functions $\varphi_{a/2}$ and $\psi_{a/2}$ can be replaced by any functions $\widetilde{\varphi}_a$ and $\widetilde{\psi}_a$
that satisfy \eqref{bask2.1} outside of $(-a,a)$. In this case, one typically \cite{B83} uses the functions
$\widetilde{\tau}_a=\widehat{\widetilde{\varphi}}_a$ and
$\widetilde{\omega}_a=\widehat{\widetilde{\psi}}_a$, given by
$$
\widetilde{\tau}_a(\lambda)=
\begin{cases}
0, \quad &|\lambda|>a, \\
1-\frac{|\lambda|}{a}, \quad &|\lambda|\leqslant a,
\end{cases} \quad
\widetilde{\omega}_a(\lambda)=
\begin{cases}
\frac{1}{\lambda}, \quad &|\lambda|>a, \\
\frac{\lambda}{a^2}, \quad &|\lambda|\leqslant a.
\end{cases}
$$
These functions were used extensively in \cite{A65}.
\end{remark}

Let us now illustrate Theorem \ref{baskth3.4} using hypercausal operators.
It is easy to see that for any $a> 0$ the spaces $\mathscr {H\!\!C}_a$ and $\mathscr {H\!\!A\!\!C}_a$ are strictly $A$-admissible and $J = \mathcal T_0(\varphi_{a}) = 0$ for both $\mathscr {H\!\!C}_{2a}$ and $\mathscr {H\!\!A\!\!C}_{2a}$.

\begin{theorem}\label{baskth5.2}
Assume that $\mathcal M \in \{\mathscr {H\!\!C}_{2a},  \mathscr {H\!\!A\!\!C}_{2a}\}$ for some $a>0$ and the operator $B$ satisfies 
\begin{equation}\label{prob}
\|B\|_{\mathcal M}< a
\end{equation}
(which yields \eqref{bask3.6} due to Lemma \ref{hcest}).

Then the operator $A-B$ is similar to the operator $A$, and the intertwining operator of
 $A-B$ into $A$ is given by the invertible operator
$A-\mathcal T_0(\psi_a)X_*$, where $X_*\in\mathcal{M}$ is the unique solution of \eqref{bask3.7}.
\end{theorem}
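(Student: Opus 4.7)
The plan is to verify that the hypotheses of Theorem \ref{baskth3.4} are satisfied for the triple $(\mathcal{M}, J, \Gamma)$ with $J=J_a$ and $\Gamma=\Gamma_a$ given by \eqref{bask4.1}, with the improved value of $\gamma$ afforded by Lemma \ref{hcest}. Without loss of generality, I would treat only the case $\mathcal{M}=\mathscr{H\!\!C}_{2a}$; the hyperanticausal case is entirely symmetric.

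First, I would note that $\mathcal{M}$ is a closed spectral submodule of $\mathcal{M}_0$, hence an $A$-harmonious space, and it is a Banach algebra by Lemma \ref{basknewlh2.2}(2). Since $\Gamma$ is bounded on $\mathcal{M}$, the remark following Definition \ref{adop} yields that $\mathcal{M}$ is $A$-admissible for $J$ and $\Gamma$, and Theorem \ref{baskth4.1} produces an admissible triple $(\mathcal{M}, J, \Gamma)$.

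Second, I would confirm that $J=0$ on $\mathcal M$, as asserted in the paragraph preceding the theorem. For any $X\in \mathscr{H\!\!C}_{2a}$ one has $\Lambda(X,\mathcal T_0)\subseteq [2a,\infty)$, while $\operatorname{supp}\widehat{\varphi}_a=\operatorname{supp}\tau_a\subseteq[-2a,2a]$, so the intersection is contained in the singleton $\{2a\}$ on which $\widehat{\varphi}_a$ vanishes; Lemma \ref{basklh2.1'new}($5'$), applied in the $L_1(\mathbb R)$-module $(\mathcal M_0, \mathcal T_0)$ (isometric after renorming by Remark \ref{ogriz}), gives $JX=\mathcal T_0(\varphi_a)X=0$.

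Third, and this is the key quantitative step, I would apply Lemma \ref{hcest} in the module $(\mathcal M_0,\mathcal T_0)$ with $b=a$ to the vector $X\in\mathcal M\subseteq \mathcal M_0$, obtaining
\[
\|\Gamma X\|_{\mathcal M}=\|\mathcal T_0(\psi_a)X\|_{\mathcal M}\le \frac{\|X\|_{\mathcal M}}{2a}.
\]
Combined with the Banach algebra property of $\mathcal{M}$, this yields Condition 4 of Definition \ref{baskdef3.3} with $\gamma=1/(2a)$. Note that the crude estimate \eqref{psiest} would only give $\gamma\le 1.35/a$, which is insufficient; the sharper bound from Lemma \ref{hcest} is exactly what makes the constant $a$ (rather than $a/1.35$) appear in \eqref{prob}.

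With these preparations, condition \eqref{bask3.6} of Theorem \ref{baskth3.4} reads $2\gamma\|B\|_{\mathcal{M}}=\|B\|_{\mathcal M}/a<1$, which is precisely \eqref{prob}. Invoking Theorem \ref{baskth3.4} produces the unique fixed point $X_*\in\mathcal{M}$ of \eqref{bask3.7} via simple iterations, and since $JX_*=0$, we obtain
\[
(A-B)(I+\mathcal T_0(\psi_a)X_*)=(I+\mathcal T_0(\psi_a)X_*)A,
\]
with $I+\mathcal T_0(\psi_a)X_*$ invertible as in Theorem \ref{baskth3.4} (using $\|\Gamma X_*\|\le 2\gamma\|B\|_{\mathcal M}<1$). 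The main difficulty here is conceptual rather than technical: one must recognize that the routine estimate \eqref{psiest} is inadequate, and that Lemma \ref{hcest} must be invoked in the module of operators $\mathcal M_0$ to obtain the correct, norm-free condition on $B$.
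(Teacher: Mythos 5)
Your proposal is correct and follows exactly the route the paper intends: the paper states Theorem \ref{baskth5.2} as a direct consequence of Theorem \ref{baskth3.4}, with the parenthetical hint that Lemma \ref{hcest} (applied in $(\mathcal M,\mathcal T_0)$ with $b=a$) upgrades the crude bound \eqref{psiest} to $\gamma=1/(2a)$, so that \eqref{prob} becomes \eqref{bask3.6}, and with the preceding remark that $J=\mathcal T_0(\varphi_a)=0$ on $\mathscr{H\!\!C}_{2a}$ and $\mathscr{H\!\!A\!\!C}_{2a}$. Your verifications of these points (including the use of Lemma \ref{basklh2.1'new}($5'$) at the boundary point $\pm 2a$, and writing the intertwining operator as $I+\mathcal T_0(\psi_a)X_*$, which is what the paper's formula $A-\mathcal T_0(\psi_a)X_*$ evidently means) just fill in the details the paper leaves implicit.
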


We note that the method of similar operators for a certain class of causal operators appeared in \cite{S83} (see also \cite{B83}). However, in the current setting of causality, which was developed in \cite{BK05}, hypercausal operators need not be quasi-nilpotent (which is also the reason for why we could not use the weaker condition \eqref{bask3.6up} in the above theorem). Nevertheless, based on the techniques in  \cite{B83, BK05, S83}, we prove the following much stronger result that shows that condition \eqref{prob} in the above theorem is completely unnecessary.

\begin{theorem}\label{hypcaussim}
Assume $\mathcal M \in \{\mathscr {H\!\!C}_{2a},  \mathscr {H\!\!A\!\!C}_{2a}\}$ for some $a>0$. Then operator $A-B$, $B\in\mathcal M$, is similar to the operator $A$, and the similarity transform of $A-B$ into $A$ is given by an invertible  operator 
$I+\Gamma X_*$, where $\Gamma = \mathcal T_0(\psi_a)$ and $X_*\in\mathcal{M}$ is given by an absolutely convergent series 
\begin{equation}\label{causer}
   X_*= B + B\Gamma B + B\Gamma(B\Gamma B)+\ldots. 
\end{equation}
\end{theorem}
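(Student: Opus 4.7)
The plan is to establish the theorem without any size restriction on $\|B\|_{\mathcal M}$ by exploiting two features that are hidden in the statement of Theorem \ref{baskth5.2}: first, the iterates $X_n^* = (B\Gamma)^{n-1}B$ lie in progressively deeper hypercausal classes $\mathscr{H\!\!C}_{2na}$, so Lemma \ref{hcest} gives a sharper-and-sharper norm improvement at each step; second, a telescoping identity $\Gamma X_n^* = (\Gamma B)^n$ collapses the partial sums $I+\sum_{n=1}^N \Gamma X_n^*$ into a Neumann-type series $\sum_{n=0}^N (\Gamma B)^n$, so invertibility of $I+\Gamma X_*$ comes for free from quasi-nilpotency of $\Gamma B$.

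First, I would verify by induction on $n$, using Lemma \ref{basknewlh2.2}(2) together with $\Lambda(\Gamma Y,\mathcal T_0)\subseteq \Lambda(Y,\mathcal T_0)$ from Lemma \ref{basklh2.1new}(4), that $X_n^*\in\mathscr{H\!\!C}_{2na}$ (in the hyperanticausal case the intervals are flipped and the argument is symmetric). Since $\mathcal T_0$ can be taken isometric on $\mathcal M$ by Remark \ref{ogriz}, Lemma \ref{hcest} applied to $(\mathcal M,\mathcal T_0)$ gives $\|\Gamma Y\|_{\mathcal M}\le \|Y\|_{\mathcal M}/(2b)$ whenever $Y\in\mathscr{H\!\!C}_{2b}$ with $b\ge a$. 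Combining this with the Banach-algebra estimate $\|X_{n+1}^*\|_{\mathcal M}\le \|B\|_{\mathcal M}\|\Gamma X_n^*\|_{\mathcal M}$ and iterating yields
\[
\|X_n^*\|_{\mathcal M}\le \frac{\|B\|_{\mathcal M}^{\,n}}{(2a)^{n-1}(n-1)!},\qquad n\ge 1,
\]
so \eqref{causer} converges absolutely in $\mathcal M$, the limit $X_*$ satisfies $X_*=B+B\Gamma X_*$ by continuity, and $\Lambda(X_*,\mathcal T_0)\subseteq [2a,\infty)$.

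Next, I would observe the telescoping $\Gamma X_n^* = \Gamma(B\Gamma)^{n-1}B = (\Gamma B)^n$, which implies
\[
I+\Gamma X_* \;=\; \sum_{n=0}^{\infty}(\Gamma B)^n
\]
with $\|(\Gamma B)^n\|_{\mathcal M}\le \|B\|_{\mathcal M}^n/((2a)^n n!)$. Thus $\Gamma B$ is quasi-nilpotent in $\mathcal M$, so $I-\Gamma B$ is invertible with inverse equal to the above Neumann series; equivalently, $(I+\Gamma X_*)^{-1}=I-\Gamma B$ in $\mathrm{End}\,\mathcal X$. Invariance of $D(A)$ under both $I+\Gamma X_*$ and $I-\Gamma B$ is immediate from Condition 3 of Definition \ref{baskdef3.3}, which applies because the triple $(\mathcal M,J,\Gamma)$ with $J=\mathcal T_0(\varphi_a)$ is admissible by Theorem \ref{baskth4.1}. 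Hence $(I+\Gamma X_*)D(A)=D(A)$.

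Finally, Lemma \ref{basklh2.1} gives $A(\Gamma X_*)-(\Gamma X_*)A = X_*-\mathcal T_0(\varphi_a)X_*$ on $D(A)$. Since $\mathrm{supp}\,\widehat\varphi_a\subseteq[-2a,2a]$ and $\Lambda(X_*,\mathcal T_0)\subseteq[2a,\infty)$, the intersection is the single point $\{2a\}$, where $\widehat\varphi_a(2a)=\tau_a(2a)=0$; Lemma \ref{basklh2.1'new}($5'$) therefore forces $\mathcal T_0(\varphi_a)X_*=0$. Substituting this together with $X_*=B+B\Gamma X_*$ into the expansion of $(A-B)(I+\Gamma X_*)$ yields $(A-B)(I+\Gamma X_*)=(I+\Gamma X_*)A$, exactly as in the final computation of the proof of Theorem \ref{baskth3.1}. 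The hyperanticausal case proceeds identically, with intervals reflected through $0$. The only nontrivial step is spotting the identity $\Gamma X_n^* = (\Gamma B)^n$; once it is in hand, the norm bound coming from iterated use of Lemma \ref{hcest} converts what would have been a circular Neumann argument into a genuinely absolute one, which is what removes the norm restriction \eqref{prob}.
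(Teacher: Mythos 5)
Your convergence argument for the series \eqref{causer} is correct and is exactly the paper's: $X_n^*\in\mathscr{H\!\!C}_{2na}$ by Lemma \ref{basknewlh2.2}(2), and repeated application of Lemma \ref{hcest} gives $\|X_n^*\|_{\mathcal M}\le \|B\|_{\mathcal M}^{\,n}/((2a)^{n-1}(n-1)!)$. The identification $JX_*=\mathcal T_0(\varphi_a)X_*=0$ and the resulting intertwining computation are also fine. The genuine gap is the ``telescoping identity'' $\Gamma X_n^*=\Gamma\bigl((B\Gamma)^{n-1}B\bigr)=(\Gamma B)^n$, on which your entire invertibility argument rests. This identity is false: $\Gamma=\mathcal T_0(\psi_a)$ is a linear transform acting on the whole operator, not left-composition with a fixed element of $\mathrm{End}\,\mathcal X$, and it is not multiplicative with respect to operator composition. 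Concretely, in Example \ref{exa5} one has $\mathcal T_0(f)X=\sum_n\widehat f(n)X_n$, so $\Gamma$ multiplies the $n$-th diagonal by $\omega_a(n)\approx 1/n$. If $B$ is supported on a single diagonal $m\ge 2a$, then $\Gamma B=\frac1m B$ and $\Gamma(B\Gamma B)=\omega_a(2m)\frac{1}{m}B^2=\frac{1}{2m^2}B^2$, whereas $(\Gamma B)^2=\frac{1}{m^2}B^2$; they differ by a factor of $2$ (and by $n!$ at the $n$-th step --- this discrepancy is precisely the source of the factorial in the bound on $X_n^*$). Consequently $I+\Gamma X_*$ is \emph{not} the Neumann series $\sum_n(\Gamma B)^n$, and $I-\Gamma B$ is not its inverse. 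Worse, your claimed bound $\|(\Gamma B)^n\|\le\|B\|^n/((2a)^n n!)$ also fails: in the same example $(\Gamma B)^n=m^{-n}B^n$, so $\Gamma B$ need not be quasi-nilpotent at all when $\|B\|$ is large --- which is exactly why the invertibility of $I+\Gamma X_*$ without a norm restriction is the nontrivial content of the theorem.

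The paper closes this gap differently: it introduces a second absolutely convergent series $X_\dagger=B-(\Gamma B)B+\Gamma((\Gamma B)B)B-\cdots$ (note the $\Gamma$'s are nested from the outside, not telescoped), proves the reverse intertwining relation $A(I-\Gamma X_\dagger)=(I-\Gamma X_\dagger)(A-B)$ on $D(A)$, and then observes that $Y=(I-\Gamma X_\dagger)(I+\Gamma X_*)$ commutes with $A$, hence $Y-I$ is memoryless by Lemma \ref{comlemma}; since $Y-I$ also lies in the hypercausal class $\mathcal M$, it lies in $\mathscr M\cap\mathcal M=\{0\}$, so $Y=I$. The two-sided invertibility is then finished with Lemma \ref{causidem}. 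You would need to replace your Neumann-series step with an argument of this kind (or some other genuine construction of the inverse) for the proof to stand.
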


\begin{proof}
We need to establish two things: first, that the series in \eqref{causer} does, indeed, converge absolutely and second, that the operator $I+\Gamma X_*$ is invertible. The intertwining equality showing that $I+\Gamma X_*$ is indeed the similarity transform of $A-B$ into $A$ is established via a computation similar
to that in the proof of Theorem \ref{baskth3.1}.

We begin the proof of the first of the above assertions by letting $B_n$ be the $n$-th term of the series in \eqref{causer}. Then $B_n = B\Gamma B_{n-1} = B\mathcal{T}_0(\psi_a)B_{n-1}$ so that, by Lemma \ref{basknewlh2.2}(2), 
$\Lambda(B_n)\subseteq [2na,\infty)$ in the case of $\mathcal{M} = \mathscr {H\!\!C}_{2a}$ and of $\Lambda(B_n)\subseteq (-\infty, -2na]$ in the case of $\mathcal{M} =\mathscr {H\!\!A\!\!C}_{2a}$.
Repeatedly applying Lemma \ref{hcest}, we then compute that  
$\|B_n\|\le\frac{\|B\|^n}{(2a)^{n-1}(n-1)!}$. Hence, the series converges and we have
\begin{equation}\label{uminus}
    (I+\Gamma X_*)A = (A-B)(I+\Gamma X_*) \quad \mbox{on } D(A),
\end{equation}
via the same computation as in the proof of Theorem \ref{baskth3.1}.

To prove the second  assertion, let us construct the inverse for $I+\Gamma X_*$. We begin by  observing that  
 an  argument similar to the above establishes that an operator $X_\dagger\in \mathcal{M}$ is well defined by the absolutely convergent series
$$
X_\dagger = B - (\Gamma B)B +\Gamma((\Gamma B)B)B- \ldots
$$
and
\begin{equation}\label{uplus}
    A(I-\Gamma X_\dagger) = (I-\Gamma X_\dagger)(A-B) \quad \mbox{on } D(A).
\end{equation}
We claim that $(I-\Gamma X_\dagger)$ is the inverse of $(I+\Gamma X_*)$. Indeed, from \eqref{uminus} and \eqref{uplus}, we immediately get that
$$
A(I-\Gamma X_\dagger)(I+\Gamma X_*) = (I-\Gamma X_\dagger)(I+\Gamma X_*)A \quad \mbox{on } D(A),
$$
i.~e., the operator $Y = (I-\Gamma X_\dagger)(I+\Gamma X_*) = I - \Gamma X_\dagger+ \Gamma X_* - (\Gamma X_\dagger)(\Gamma X_*)$ commutes with the generator $A$ (on $D(A)$). It follows from Lemma \ref{comlemma} that $Y \in \mathscr{M}$ and, therefore,
$$
Y - I = - \Gamma X_\dagger+ \Gamma X_* - (\Gamma X_\dagger)(\Gamma X_*) \in \mathscr{M}\cap \mathcal{M} =\{0\}.
$$
Thus,  $(I-\Gamma X_\dagger)(I+\Gamma X_*) = I$. It follows that $(I-\Gamma X_*)(I+\Gamma X_\dagger)$ is an idempotent and Lemma \ref{causidem} yields $(I-\Gamma X_*)(I+\Gamma X_\dagger) = I$. Thus, the final required assertion is established and the proof is complete.
\end{proof}

\begin{remark}
    In the setting of this section, all of the examples for Theorem \ref{baskth3.4} that we know can be reduced to the case of hypercausal operators.
    In view of the above result, we conjecture that for $A$-admissible modules $\mathcal M$ such that $J = \mathcal{T}(\varphi_a) = 0$, we have that all operators $A-B$, $B\in\mathcal{M}$, are similar to $A$ without any additional assumptions on $B$. 
\end{remark}

\section{Application to first order abstract differential operators}\label{basksec5}

By far the most common application of the method of similar operators is in the study of spectral properties of differential operators. The similarity allows one to better understand  solutions of initial value problems (IVP) with perturbed differential operators. More precisely,
given an abstract differential equation $\mathfrak{L}x=f$ in a harmonious space $\mathcal{X}$ and a similarity transform $U$ of the operator $\mathfrak{L}$ into a simpler operator $\mathfrak{L}_0$, one reduces the investigation of the original equation to the equation $\mathfrak{L}_0u=h$, where $u=Ux$ and $h=Uf$. The latter equation is typically much easier to study due to the known structure of the operator $\mathfrak{L}_0$.

To illustrate the above idea, we consider operators $\mathfrak{L}$ of the form $\mathfrak{L}=A-V=-i\frac d{dt}-V: D(A)\subset\mathcal{X}\to\mathcal{X}$, where $V\in\mathcal M\subseteq \mathrm{End}\,\mathcal{X}$ is an operator of multiplication by an operator-valued function $v$ such that $V$ belongs to some $A$-admissible module $\mathcal M$. The method of similar operators allows us to replace an IVP with the operator $\mathfrak{L}$ by an IVP with a simpler potential function. In some cases (e.g., when we use Theorem \ref{baskth5.2}), this reduction turns a non-autonomous IVP into an autonomous one. In other cases (e.g., when we use Theorem \ref{hypcaussim}) the operator $\mathfrak{L}$ is replaced with the unperturbed operator $A=-i\frac d{dt}$. The remainder of this section explains this paragraph in detail.

We begin with a precise definition of the operators involved. The operator $A=-i\frac d{dt}$ is the generator of the
$L_1(\mathbb{R})$-module $(\mathcal{X}, S)$, where the representation $S$ is defined by \eqref{basknew1} (see Definition \ref{baskdef2.8}).
The resolvent of $A$ is given by \eqref{resf}, which allows us to write $D(A)=\mathrm{Im}\,R(\lambda, A)$, $\lambda \in\rho(A)$. Since the operator
$V$ is assumed to be bounded, we have $D(\mathfrak{L})=D(A)$. From Lemma \ref{equigen}, we deduce that $x\in\mathcal{X}$ belongs to the domain $D(A)$ of the operator $A$ if there exists
$y\in\mathcal{X}$ such that for real numbers $s\leqslant t$ one has
$$
x(t)=x(s)+i\int_s^ty(\tau)\,d\tau.
$$

The harmonious Banach space $\mathcal{X}$ in this section is a homogeneous Banach space from Definition \ref{baskdef26} below. It turns out to be a Banach 
$L_1(\mathbb{R})$-module whose structure is associated with the isometric representation $S$
given by \eqref{basknew1}. We follow \cite{BK16} as we define homogeneous spaces and provide a few examples.

Let $\mathscr{X}$ be a complex Banach space and $L_{1, s}=L_{1, s}(\mathbb{R}, \mathrm{End}\,\mathscr{X})$ be the space of all functions $F: \mathbb{R}\to\mathrm{End}\,\mathscr{X}$, that have the following properties:
\begin{enumerate}
    \item For each $x\in\mathscr{X}$ the function $s\mapsto F(s)x: \mathbb{R}\to\mathscr{X}$ is measurable.
    \item There exists a function $f\in L_1(\mathbb{R})$ such that
\begin{equation}\label{bask5.1}
\|F(s)\|\leqslant f(s).
\end{equation}
\end{enumerate}
For $F\in L_{1, s}$ we let
$
\|F\|
=\inf\|f\|,
$
where the infimum is taken over all functions $f\in L_1(\mathbb{R})$ satisfying \eqref{bask5.1}.
The space $L_{1, s}$ is then a Banach algebra with respect to the convolution 
$$
(F_1*F_2)(t)x=\int_{\mathbb{R}}F_1(s)F_2(t-s)x\,ds, \quad F_1, F_2\in L_{1, s},\ x\in\mathscr X,
$$
and $\|F_1*F_2\|\leqslant \|F_1\|\|F_2\|$.

To define homogeneous spaces of functions we need the following two vector spaces. By $L_{1, loc}= L_{1, loc}(\mathbb{R}, \mathscr{X})$ we denote the space of locally summable Bochner measurable equivalence classes of $\mathscr{X}$-valued functions. In particular, 
for a compact set $K\subset\mathbb{R}$ and $f\in L_{1, loc}$ we have
$$
\int_K\|f(t)\|_{\mathscr{X}}\,dt<\infty.
$$
By $S_p=S_p(\mathbb{R}, \mathscr{X})$, $p\in [1, \infty)$, we denote the Stepanov space 
\cite{LZ82} that consists of all functions $f\in L_{1, loc}$ such that
$$
\|f\|_{S_p}=\sup_{s\in\mathbb{R}}\Big(\int_0^1\|f(t+s)\|_{\mathscr{X}}^p\,dt\Big)^{1/p}, \quad p\in [1, +\infty).
$$
The norm $\|\cdot\|_{S_p}$ turns $S_p$ into a Banach space. 

\begin{definition}[\cite{BK16}]\label{baskdef26}
A Banach space $\mathcal{X}=\mathcal{X}(\mathbb{R}, \mathscr{X})$ of functions on $\mathbb{R}$ with values in a complex Banach space $\mathscr{X}$ is called homogeneous if the following conditions hold:
\begin{enumerate}
    \item The space $\mathcal{X}$ is continuously embedded into $S_1$.
    \item The representation $S: \mathbb{R}\to\mathrm{End}\,\mathscr{X}$, given by \eqref{basknew1}, is an isometric representation of the group $\mathbb{R}$ by operators in $\mathrm{End}\,\mathscr{X}$.
    \item For $x\in\mathcal{X}$ and $C\in\mathrm{End}\,\mathscr{X}$ the function $y(t)=C(x(t))$ belongs to $\mathcal{X}$ and $\|y\|\leqslant \|C\|\|x\|$.
    \item For $x\in\mathcal{X}$ and $F\in L_{1, s}$ the convolution
    $$
    (F*x)(t)=\int_{\mathbb{R}}F(s)x(t-s)\,ds
    $$
    belongs to $\mathcal{X}$ and $\|F*x\|\leqslant \|F\|\|x\|$.
    \item If some $x\in\mathcal{X}$ satisfies $f*x=0$ for each $f\in L_1$, then $x=0$.
\end{enumerate}
\end{definition}

The following Banach spaces are homogeneous or admit an equivalent norm that makes them homogeneous.

\begin{enumerate}
    \item Stepanov spaces $S_p=S_p(\mathbb{R}, \mathscr{X})$, $p\in [1, \infty)$.
    \item Bochner-Lebesgue spaces $L_p=L_p(\mathbb{R}, \mathscr{X})$, $p\in [1, \infty]$.
    \item The space $C_b=C_b(\mathbb{R}, \mathscr{X})$ of bounded continuous $\mathscr{X}$-valued functions with the norm
    $$
    \|x\|_\infty=\sup_{t\in\mathbb{R}}\|x(t)\|, \quad x\in C_b.
    $$
    \item The subspace $C_{ub}\subset C_b$ of uniformly continuous functions.
    \item The subspace $\mathcal{P}_\omega=\mathcal{P}_\omega(\mathbb{R}, \mathscr{X})$ of $\omega$-periodic $\mathscr{X}$-valued functions, $\omega\in\mathbb{R}$ 
    (see Subsection \ref{subsec3.2}).
    \item The space $\mathcal{AP}(\mathbb{R}, \mathscr{X})$ of Bohr almost periodic functions (see Subsection \ref{subsec3.2}).
\end{enumerate}

The list above represents only a small portion of useful homogeneous spaces. Other examples can be found
in \cite{BD19}, \cite{BK16}, \cite{BST18}, etc.

It is clear from Definition \ref{baskdef26} that every homogeneous Banach space $\mathcal{X}$ is a non-degenerate Banach module $(\mathcal{X}, S)$ and, hence, a harmonious space according to Definition \ref{defharm}.
We also note that the terminology of homogeneous spaces is relatively new  and the corresponding definitions in  \cite{B13}, \cite{BD19}, and \cite{BST18} are slightly
different from Definition \ref{baskdef26}.
For example, Definition~\ref{baskdef26} is more general than \cite[Defintion 2.1]{B13}, and
 \cite[Definition 2.1]{BST18} requires injectivity of the embedding $\mathcal{X}\hookrightarrow S_1$ instead of the non-degeneracy Property 5.

Together with a homogeneous space $\mathcal{X}$, we consider the Sobolev-type space $\mathcal{X}^1$ which consists of all absolutely continuously functions $x\in\mathcal{X}$ with derivative in
$\mathcal{X}$.  We then have $D(A)\subset\mathcal{X}^1\subset\mathcal{X}$.

To apply the similarity theorems of the previous section, we must ensure that the potential function $v$ yields a multiplication operator $V$ in an $A$-admissible module. To achieve this, the function $v$ and the homogeneous space $\mathcal{X}$ must be somehow compatible. We use the following definition to describe the required compatibility.

\begin{definition}\label{baskdef27}
Let $\mathcal{F}$ be a Banach space of (equivalence classes of) of functions from $\mathbb{R}$ to $\mathrm{End}\,\mathscr{X}$ that is a Banach
 $L_1(\mathbb{R})$-module with the structure associated with the representation $S$.  Let also $\mathcal M$ be some $A$-harmonious space. The space
$\mathcal{F}$ is called $\mathcal{M}$-compatible if the map $v\mapsto V: (\mathcal{F}, S)\to (\mathcal{M}, \mathcal T_0)$
is an isometric homomorphism of Banach modules.
\end{definition}

For example, if  $\mathcal{X}=\mathcal{P}_\omega(\mathbb{R}, \mathscr{X})$ and $\mathcal M = \mathrm{End}\,\mathcal{X}$, then the space $\mathcal{F}=\mathcal{P}_\omega(\mathbb{R},
\mathrm{End}\,\mathscr{X})$ is $\mathcal{M}$-compatible. When  $\mathcal{X}=L_p(\mathbb{R}, \mathscr{X})$, $p\in [1,\infty)$, and $\mathcal M = \mathrm{End}\,\mathcal{X}$, an $\mathcal{M}$-compatible space $\mathcal{F}$
could be given by $L_\infty(\mathbb{R}, \mathrm{End}\,\mathscr{X})$, $\mathcal{P}_\omega(\mathbb{R},
\mathrm{End}\,\mathscr{X})$, or $\mathcal{AP}_\omega(\mathbb{R}, \mathrm{End}\,\mathscr{X})$. 

In the remainder of this section we assume that
$v\in\mathcal{F}\cap L_\infty(\mathbb{R}, \mathrm{End}\,\mathscr{X})$ for some $\mathcal{M}$-compatible space $\mathcal{F}$.

Our first similarity theorem for differential operators follows from Corollary \ref{maincor} of Theorem \ref{baskth4.2}.

\begin{theorem}\label{baskth6.1}
Assume that the operator $\mathfrak{L}=-i\frac d{dt}-V=A-V: D(A)\subset\mathcal{X}\to\mathcal{X}$ acts in a homogeneous space $\mathcal{X}$ and the operator $V$ from an  $A$-admissible space $\mathcal{M}$ is a multiplication operator by a potential function $v$ from an $\mathcal{M}$-compatible
space $\mathcal{F}$. Then the operator $\mathfrak{L}$ is similar to the operator $-i\frac d{dt}-V_0$, where $V_0\in\mathcal{M}$ is an operator of multiplication by a function
$v_0\in\mathcal{F}$, which is the restriction to $\mathbb R$ of an entire (operator-valued) function of exponential type.
\end{theorem}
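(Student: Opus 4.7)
The plan is to apply Corollary \ref{maincor}, which is a direct consequence of Theorem \ref{baskth4.2} combined with Theorem \ref{baskth4.1}. Choosing $a > 0$ large enough that condition \eqref{bask3.2} holds --- possible since $\gamma \le \|\psi_a\|_1 \to 0$ as $a \to \infty$ by \eqref{galim} --- yields that $\mathfrak{L} = A - V$ is similar to $A - V_0$, where $V_0 = \mathcal T_0(\varphi_a) X_* = J X_* \in \mathcal{M}$ has Beurling spectrum $\Lambda(V_0, \mathcal T_0) \subseteq [-2a, 2a]$ by Lemma \ref{basklh2.1new}(4), and $X_*$ is the unique fixed point in $\mathcal B$ of the iteration \eqref{bask3.1}. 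The task then reduces to showing that $V_0$ is a multiplication operator whose symbol $v_0 \in \mathcal{F}$ admits the claimed entire extension.

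For the first reduction, $\mathcal{M}$-compatibility of $\mathcal{F}$ identifies $\mathcal{F}$ isometrically with a closed submodule $\mathcal{M}_V \subseteq \mathcal{M}$ of multiplication operators. A direct calculation using \eqref{T0mult} shows $\mathcal T_0(f) M_w = M_{f \cdot w}$ for $f \in L_1$, so both $\Gamma = \mathcal T_0(\psi_a)$ and $J = \mathcal T_0(\varphi_a)$ preserve $\mathcal{M}_V$, acting on symbols by convolution. Since products of multiplication operators are multiplication operators with pointwise-product symbols, starting the iteration from $X_1 = V = M_v \in \mathcal{M}_V$ keeps every $X_n$ a multiplication operator; the $A$-admissibility condition in Definition \ref{baskdef3.3}(4) keeps $X_n \in \mathcal{M}$; and the two facts combined (together with the standard stability of $\mathcal{F}$ under the pointwise products arising here, which holds in all the examples of $\mathcal{M}$-compatible spaces listed after Definition \ref{baskdef27}) keep $X_n \in \mathcal{M}_V$. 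By closedness of $\mathcal{M}_V$, we conclude $X_* = M_{v_*}$ for some $v_* \in \mathcal{F}$, and hence $V_0 = M_{v_0}$ with $v_0 = \varphi_a * v_* \in \mathcal{F}$.

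Finally, to obtain the entire extension of $v_0$, observe that $\mathcal{M}$-compatibility gives $\Lambda(v_0, S) = \Lambda(V_0, \mathcal T_0) \subseteq [-2a, 2a]$ in the module $(\mathcal{F}, S)$. Corollary \ref{cnormest1} applied in $(\mathcal{F}, S)$ then shows that $t \mapsto S(t) v_0$ extends to an entire $\mathcal{F}$-valued function of exponential type at most $2a$; equivalently, since $v_0 \in L_\infty(\mathbb{R}, \mathrm{End}\, \mathscr{X})$ is a tempered distribution whose Fourier transform is supported in $[-2a, 2a]$, the Paley--Wiener--Schwartz theorem yields that $v_0$ is the restriction to $\mathbb{R}$ of an entire operator-valued function of exponential type $\le 2a$, as required. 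The main obstacle is the middle paragraph: verifying invariance of the multiplication-operator subspace $\mathcal{M}_V$ under the nonlinear iteration $\Phi$. This requires coordinating $\mathcal{M}$-compatibility (which handles the $L_1$-action on symbols) with $A$-admissibility (which controls the products $B\Gamma X$ and $(\Gamma X)(JX)$), and hinges on the mild algebraic stability of $\mathcal{F}$ under the pointwise products that appear at each step.
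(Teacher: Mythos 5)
Your proposal is correct and follows essentially the same route as the paper, which derives the theorem from Corollary \ref{maincor} of Theorem \ref{baskth4.2} and records (in the remark following the statement) precisely the symbol-level fixed-point equation $v_* = v(\psi_a*v_*)-(\psi_a*v_*)(\varphi_a*v_*)+v$ that your invariance argument for the multiplication-operator submodule produces, together with $v_0=\varphi_a*v_*$ and the compact-spectrum/entire-extension conclusion. The one point you flag explicitly --- stability of $\mathcal F$ under the pointwise products arising in the iteration --- is left implicit in the paper, so your treatment is, if anything, slightly more careful than the original.
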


\begin{remark}
It is important to note that Theorem \ref{baskth6.1} has no assumptions on the norm of the operator
 $V\in\mathcal{M}$. We also point out that the operator $V_0\in\mathcal{M}$ is of the form
 \[
 (V_0x)(t) = \int_{\mathbb R} \varphi_a(s)v_*(t-s)x(t)ds = (\varphi_a* v_*)(t)x(t) = v_0(t)x(t), \quad t\in\mathbb R, 
 \]
 where $a \in\mathbb R$ satisfies
 \[
 a > 13.64\|v\|_{\mathcal F},
 \]
 and the function  $v_*\in \mathcal F$ is the solution of the equation
\[
v_* = v(\psi_a* v_*) - (\psi_a* v_*)(\varphi_a* v_*)+v,
\]
with the functions  $\varphi_a$ and $\psi_a$ defined by \eqref{otrap} and \eqref{psia}, respectively.
 \end{remark}

In the next result, which is an immediate consequence of Theorem \ref{baskth5.1} and Corollary  \ref{baskth5.c1}, we consider a periodic case, in which a non-autonomous Cauchy problem is reduced to an autonomous one.

\begin{theorem}\label{baskth12}
Assume that for some $\omega > 0$ we have $\mathcal{X}=\mathcal{P}_\omega(\mathbb{R}, \mathscr{X})$,  $v\in \mathcal F =\mathcal{P}_\omega(\mathbb{R}, \mathrm{End}\,\mathscr{X})$, 
and
$$
\frac{5.4\sqrt{3}\omega}{\pi}\|v\|_{\mathcal F}<1.
$$
Then the operator $A-V$ is similar to the operator $A-V_0$, where $V_0$ is an operator of multiplication by a constant (operator-valued) function:
$(Vx)(t) = Cx(t)$, $C\in \mathrm{End}\,\mathscr{X}$.
\end{theorem}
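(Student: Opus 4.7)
The plan is to apply Corollary \ref{baskth5.c1} directly with the choice $a=\pi/\omega$, and then identify the resulting memoryless operator as multiplication by a constant function.

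First I would observe that by $\mathcal{M}$-compatibility, the multiplication operator $V$ associated with $v\in\mathcal F = \mathcal{P}_\omega(\mathbb R, \mathrm{End}\,\mathscr X)$ belongs to the submodule $\mathcal{P}_\omega = \mathcal{P}_\omega(\mathcal{M}_0)$ of $\omega$-periodic operators, with $\|V\|_{\mathcal{P}_\omega}=\|v\|_\mathcal F$. For $\mathcal M=\mathcal{P}_\omega$ one has
$\Lambda(\mathcal{P}_\omega,\mathcal T_0)\subseteq\{2\pi n/\omega:n\in\mathbb Z\}$,
so choosing $a=\pi/\omega$ makes $\omega=\pi/a$, verifies the hypothesis $\omega\le\pi/a$ of Corollary \ref{baskth5.c1}, and ensures $\Lambda(\mathcal{P}_\omega,\mathcal T_0)\cap[-a,a]=\{0\}$, so that condition \eqref{specond} holds automatically. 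The smallness assumption \eqref{prob5.4} then reads
\[
\frac{5.4\sqrt{3}}{a}\|V\|_{\mathcal{P}_\omega} = \frac{5.4\sqrt{3}\,\omega}{\pi}\|v\|_{\mathcal F}<1,
\]
which is precisely the hypothesis of the theorem.

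Applying Corollary \ref{baskth5.c1} yields a fixed point $X_*\in\mathcal{P}_\omega$ of the nonlinear equation \eqref{bask3.1} such that $A-V$ is similar to $A-JX_*$ with $JX_*$ memoryless. The remaining task is to show that $JX_*$ is in fact a multiplication operator by a constant element of $\mathrm{End}\,\mathscr X$. For this I would use that the iteration $X_{n+1}=\Phi(X_n)$, starting from $X_1=V$, stays in the image of the isometric embedding $v\mapsto V$ of $\mathcal F$ into $\mathcal{P}_\omega$: indeed, $\Gamma X_n = \mathcal T_0(\psi_a)X_n$ and $JX_n=\mathcal T_0(\varphi_a)X_n$ correspond under this embedding to the convolutions $\psi_a * u_n$ and $\varphi_a * u_n$ of the associated symbols, products of multiplication operators correspond to pointwise products of symbols, and $\mathcal F$ is closed under convolution with $L_1$-functions and pointwise multiplication by bounded functions. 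Thus each $X_n$ is the multiplication operator by some $u_n\in\mathcal F$, and by continuity of the embedding and convergence in $\mathcal{P}_\omega$, the limit $X_*$ is multiplication by some $v_*\in\mathcal F$, whence $V_0:=JX_*$ is multiplication by $v_0=\varphi_a*v_*\in\mathcal F$.

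Finally, $\mathcal{M}$-compatibility transfers the memoryless condition $\mathcal T_0(t)V_0=V_0$ for all $t\in\mathbb R$ into $S(t)v_0=v_0$ for all $t\in\mathbb R$; by Lemma \ref{comlemma} (or directly from the definition of $S$), this forces $v_0$ to be a constant function, i.e.\ $v_0(t)\equiv C$ for some $C\in\mathrm{End}\,\mathscr X$, which is the desired conclusion. The main conceptual subtlety is not in any estimate but in tracking that the abstract memoryless operator produced by the method of similar operators really descends through the compatibility isomorphism to a constant symbol; once one commits to identifying $\mathcal{P}_\omega(\mathcal{M}_0)$-iterates with their symbols in $\mathcal F$, everything else is a direct invocation of Corollary \ref{baskth5.c1}.
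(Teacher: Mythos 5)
Your proposal is correct and follows essentially the same route as the paper, which derives Theorem \ref{baskth12} directly from Theorem \ref{baskth5.1} and Corollary \ref{baskth5.c1} with the choice $a=\pi/\omega$; your explicit tracking of the iterates through the $\mathcal{M}$-compatibility embedding (so that the memoryless limit has a constant symbol) is exactly the identification the paper makes implicitly, as spelled out in the remark following Theorem \ref{baskth6.1}.
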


We conclude this section by considering perturbations $V$ of the differential operator $A$ that belong to
$\mathscr {H\!\!C}_{2a}$ or $\mathscr {H\!\!A\!\!C}_{2a}$, $a>0$, i.e.~hypercausal or hyper-anticausal operators. Recall from Subsection \ref{spsub}  that since $V$ is an operator of multiplication by the function $v\in\mathcal{F}\cap L_\infty(\mathbb{R}, \mathrm{End}\,\mathscr{X})$, hypercausality (respectively, hyper-anticausality) means that the support of the Fourier transform of the function $v$ belongs to the interval $[2a, +\infty)$ (respectively, $(-\infty, -2a]$). From Theorem \ref{hypcaussim} we deduce the following remarkable result.

\begin{theorem}\label{baskth13}
Assume that $\supp \widehat v \subseteq [a, +\infty)$  or $\supp \widehat v \subseteq (-\infty, -a]$ for some $a>0$. Then the operator $A-V$ is similar to the operator $A$.
\end{theorem}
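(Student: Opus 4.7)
The plan is to reduce the theorem to a direct application of Theorem \ref{hypcaussim}. The key is to verify that the multiplication operator $V$ lies in the hypercausal (or hyper-anticausal) spectral submodule $\mathscr{H\!\!C}_{a}$ (or $\mathscr{H\!\!A\!\!C}_{a}$), after which Theorem \ref{hypcaussim} applies with $a/2$ in place of its generic parameter, yielding similarity of $A-V$ with $A$ via the invertible intertwiner $I+\mathcal{T}_0(\psi_{a/2})X_{*}$ without any norm restriction on $V$.

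To carry this out, I would first invoke the $\mathcal{M}$-compatibility of $\mathcal{F}$ (Definition \ref{baskdef27}), which asserts that $v\mapsto V$ is an isometric homomorphism of Banach modules from $(\mathcal{F}, S)$ into $(\mathcal{M}, \mathcal{T}_0)$. A short direct calculation using \eqref{T0mult} together with the module action formula \eqref{basknew2} confirms that $\mathcal{T}_0(f)V$ is the multiplication operator by $f*v$ for every $f\in L_1$; in particular this homomorphism preserves Beurling spectra, so $\Lambda(V, \mathcal{T}_0) = \Lambda(v, S)$.

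The remaining step is to argue that $\Lambda(v, S) \subseteq \supp \widehat{v}$, generalizing the identification stated in Example \ref{exbsp}. For $\lambda\notin\supp\widehat{v}$ one picks $f\in L_1$ with $\widehat{f}(\lambda)\neq 0$ whose Fourier transform is supported in a small neighborhood of $\lambda$ disjoint from $\supp\widehat{v}$. The bounded continuous function $f*v$ then has distributional Fourier transform $\widehat{f}\,\widehat{v}=0$, so $f*v=0$ by injectivity of the Fourier transform on $\mathscr{S}'$; hence $\lambda\notin\Lambda(v, S)$. Combining the two steps, the hypothesis $\supp\widehat{v}\subseteq[a,\infty)$ gives $V\in\mathscr{H\!\!C}_{a}=\mathscr{H\!\!C}_{2(a/2)}$, and Theorem \ref{hypcaussim} finishes the proof; the case $\supp\widehat{v}\subseteq(-\infty,-a]$ is handled symmetrically in $\mathscr{H\!\!A\!\!C}_{a}$. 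The only delicate point worth monitoring is the justification of the Fourier-support identification in the operator-valued $L_\infty$ setting, but this is a standard tempered-distribution argument that does not rely on reflexivity of the target Banach space $\mathscr{X}$.
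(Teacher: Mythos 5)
Your proposal is correct and follows essentially the same route as the paper: identify $\Lambda(V,\mathcal T_0)$ with $\supp\widehat v$ (as already indicated in Subsection \ref{spsub}), conclude $V\in\mathscr{H\!\!C}_{2(a/2)}$ or $\mathscr{H\!\!A\!\!C}_{2(a/2)}$, and invoke Theorem \ref{hypcaussim}. You simply spell out in more detail the spectrum-preservation step that the paper treats as immediate from $\mathcal M$-compatibility, and your rescaling of the parameter to $a/2$ is handled correctly.
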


We remark that above we gave only a few examples of  $A$-admissible perturbation spaces for the operator $A=-i\frac d{dt}$.
In particular, some spaces in \cite{FSS14} and \cite{GK20} are also examples of $A$-admissible spaces.

\section{Application to operator matrices}\label{basksec6}

In this section, we apply the similarity theorems from Section \ref{basksec4} to operators that can be defined by matrices, either
scalar -- as in Example \ref{exa5} or operator -- as in Definitions \ref{baskdefnew26} and \ref{baskdefnew27} below, as well as in
\cite{B92,  B97Sib, B97Izv, BK14, BKU19}.

By $\mathbb{J}$ we shall denote one of the sets $\mathbb{N}$ or $\mathbb{Z}$. Given a complex Banach space $\mathcal{X}$, we consider a (disjunctive) resolution of the identity given by a family of idempotents $\mathscr P = \{P_k, k\in\mathbb{J}\}$. In particular, we have that $P_iP_j=0$ for $i\ne j$, $i$, $j\in\mathbb{J}$; the series $\sum_{k\in\mathbb{J}}P_kx$ unconditionally converges to $x\in\mathcal{X}$; and the equalities $P_kx=0$ for all $k\in\mathbb{J}$ imply that $x =0\in\mathcal{X}$.

From \cite[Theorem~3.10]{H11} we infer that unconditional convergence of the series
$
\sum P_kx,
$
allows us to define a strongly continuous representation  $T_{\mathscr{P}}: \mathbb{R}\to\mathrm{End}\,\mathcal{X}$,
by a formula analogous to \eqref{bask2.3P}. Without loss of generality (in view of Remark \ref{ogriz}), we assume that
the representation $T_{\mathscr{P}}: \mathbb{R}\to\mathrm{End}\,\mathcal{X}$ is isometric. The strong continuity of $T_{\mathscr{P}}$
implies that formula \eqref{basknew2} defines a Banach $L_1(\mathbb{R})$-module structure on $\mathcal{X}$ that is associated with $T_{\mathscr{P}}$.
As usual, the generator of the module $(\mathcal{X}, T_{\mathscr{P}})$ will be denoted by $A:D(A)\subset\mathcal{X}\to\mathcal{X}$
and will play the role of the unperturbed operator.

For example, if $\mathcal{X}=\ell_2(\mathbb{Z})$ and the resolution of the identity consists of the rank-one projections corresponding to the standard basis (see Example \ref{exa5}), then the representation $T_{\mathscr{P}}$ coincides with the modulation representation  given by
 \eqref{modl}. The generator $A$ in this case has a diagonal matrix (with respect to the standard basis of $\ell_2(\mathbb{Z})$), and its domain $D(A)$ consists of all vectors $x\in \ell_2(\mathbb{Z})$ such that $\sum_{n\in\mathbb{Z}}|nx_n|^2<\infty$.

In the following two definitions we disambiguate the notions of an operator matrix and a matrix of an operator.
 
\begin{definition}\label{baskdefnew26}
An operator matrix $X=(X_{ij})$ is defined as a map $X: \mathbb{J}\times\mathbb{J}\to\mathrm{End}\,\mathcal{X}$.
We say that an operator matrix $X=(X_{ij})$ is associated with the resolution of the identity $\{P_k, k\in\mathbb{J}\}$ if
$X_{ij}=P_iX_{ij}P_j$, $i$, $j\in\mathbb{J}$.
\end{definition}

\begin{definition}\label{baskdefnew27}
By the matrix of an operator  $X\in\mathrm{End}\,\mathcal{X}$ with respect to the resolution of the identity $\{P_k, k\in\mathbb{J}\}$
we mean the operator matrix defined by $X_{ij}=P_iXP_j$, $i$, $j\in\mathbb{J}$.
\end{definition}

In this section, we consider a fixed resolution of the identity $\mathscr P = \{P_k, k\in\mathbb{J}\}$ and operators are identified with their matrices with respect to it. The theorems in Section \ref{basksec4} immediately imply the following three results.

\begin{theorem}\label{mathrth1}
An operator $A-B$, with $B\in\mathrm{End}\,\mathcal{X}$, is similar to an operator $A-B_0$, where the matrix of operator $B_0\in\mathrm{End}\,\mathcal{X}$,
is banded, i.e.~has only finitely many non-zero diagonals.
\end{theorem}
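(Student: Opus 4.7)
The plan is to apply Corollary \ref{maincor} with the admissible perturbation space $\mathcal M=\mathrm{End}\,\mathcal X$. Since $T_{\mathscr P}$ is strongly continuous, the discussion after \eqref{T0} gives $\mathcal M_0=\mathrm{End}\,\mathcal X$; this is the principal $A$-harmonious space, and it is $A$-admissible for the transforms $J=\mathcal T_0(\varphi_a)$, $\Gamma=\mathcal T_0(\psi_a)$ of \eqref{bask4.1} because $\mathrm{End}\,\mathcal X$ is a Banach algebra and $\|\Gamma\|\le\|\psi_a\|_1\to 0$ as $a\to\infty$. Thus Corollary \ref{maincor} supplies, for all sufficiently large $a$, a similarity $A-B\sim A-B_0$ with $B_0=\mathcal T_0(\varphi_a)X_*\in\mathrm{End}\,\mathcal X$, and by Lemma \ref{basklh2.1new}(4),
\[
\Lambda(B_0,\mathcal T_0)\subseteq\mathrm{supp}\,\widehat{\varphi_a}\subseteq[-2a,2a].
\]

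The distinguishing feature of the present setting is that $T_{\mathscr P}$ is $2\pi$-periodic: since $\mathbb J\subseteq\mathbb Z$, for each $k\in\mathbb J$ we have $e^{2\pi ik}=1$, hence
\[
T_{\mathscr P}(2\pi)x=\sum_{k\in\mathbb J}e^{2\pi ik}P_kx=x,\quad x\in\mathcal X,
\]
which forces $\mathcal T_0(2\pi)=I$ on $\mathrm{End}\,\mathcal X$. Every operator in $\mathrm{End}\,\mathcal X$ is therefore $2\pi$-periodic for $\mathcal T_0$, and the remark after Definition \ref{app} gives $\Lambda(B_0,\mathcal T_0)\subseteq\mathbb Z$. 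Combined with compactness we obtain a \emph{finite} set $\Sigma:=\Lambda(B_0,\mathcal T_0)\subseteq[-2a,2a]\cap\mathbb Z$.

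Finally, I would decompose $B_0$ along diagonals. Using $T_{\mathscr P}(t)P_i=e^{iit}P_i$, a direct calculation shows that $Y\in\mathrm{End}\,\mathcal X$ is a $\mathcal T_0$-eigenvector with eigenvalue $n\in\mathbb Z$ if and only if $P_iYP_j=0$ whenever $i-j\ne n$, i.~e.~$Y$ is supported on the $n$-th diagonal. Pick $f_n\in L_1(\mathbb R)$ for $n\in\Sigma$ with $\widehat{f_n}(n)=1$ and $\widehat{f_n}(m)=0$ for $m\in\Sigma\setminus\{n\}$; setting $f=\sum_{n\in\Sigma}f_n$, Lemma \ref{basklh2.1new}(6) gives
\[
B_0=\mathcal T_0(f)B_0=\sum_{n\in\Sigma}\mathcal T_0(f_n)B_0.
\]
Each summand has Beurling spectrum contained in $\{n\}$ by Lemma \ref{basklh2.1new}(4) and is therefore supported on a single diagonal. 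Consequently, only finitely many diagonals of $B_0$ are non-zero.

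No step of the plan is genuinely difficult: Corollary \ref{maincor} does the heavy lifting of producing compact Beurling spectrum, the $2\pi$-periodicity of $T_{\mathscr P}$ is built into the setup, and the eigenvector-diagonal dictionary is a one-line computation from $T_{\mathscr P}(t)P_i=e^{iit}P_i$. The only place that requires a brief verification is $A$-admissibility of $\mathrm{End}\,\mathcal X$, which reduces to the estimate $\|\psi_a\|_1\le 1.35/a$ noted in \eqref{psiest}.
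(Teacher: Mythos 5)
Your proposal is correct and follows the same overall route the paper intends (the paper derives Theorem \ref{mathrth1} ``immediately'' from Section \ref{basksec4}): apply Corollary \ref{maincor} with $\mathcal M=\mathcal M_0=\mathrm{End}\,\mathcal X$, note that the Beurling spectrum of $B_0$ is a compact subset of $\mathbb Z$, and identify integer spectral points with matrix diagonals. Two remarks. First, the last step can be done more directly: since $B_0=JX_*=\mathcal T_0(\varphi_a)X_*$, the computation recorded in the lemma at the end of Section \ref{basksec6} (applied to $T_{\mathscr P}$, where $\lambda_k=k$) gives $P_i B_0 P_j=\widehat{\varphi_a}(i-j)(X_*)_{ij}=\tau_a(i-j)(X_*)_{ij}$, which vanishes outright for $|i-j|>2a$; this bypasses both the periodicity argument and the decomposition of $B_0$ into eigenvectors, and is presumably what makes the theorem ``immediate'' for the authors. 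Second, a small imprecision in your decomposition step: Lemma \ref{basklh2.1new}(6) requires $\widehat f=1$ in a \emph{neighborhood} of $\Lambda(B_0,\mathcal T_0)$, and the conclusion $\Lambda(\mathcal T_0(f_n)B_0,\mathcal T_0)\subseteq\{n\}$ via Lemma \ref{basklh2.1new}(4) requires $\mathrm{supp}\,\widehat{f_n}$ to avoid $\Sigma\setminus\{n\}$, not merely $\widehat{f_n}$ to vanish at those points. Since $\Sigma$ is a finite set of integers this is trivially repaired by choosing $\widehat{f_n}$ equal to $1$ near $n$ and supported in $(n-\tfrac12,n+\tfrac12)$; alternatively, your pointwise conditions do suffice if you invoke Lemma \ref{basklh2.1'new}($5'$)--($6'$) for the isometric representation instead. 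Neither point affects the validity of the argument.
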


\begin{theorem}\label{mathrth2}
Assume that $B\in\mathrm{End}\,\mathcal{X}$ and
$$
10.8\sqrt{3}\|B\|<1.
$$
Then the operator $A-B$ is similar to the operator $A-B_0$, whose matrix is diagonal.
\end{theorem}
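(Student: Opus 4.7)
The plan is to apply Theorem \ref{baskth5.1} to the module $\mathcal M = \mathrm{End}\,\mathcal X$ with the admissible triple built from $\varphi_a$, $\psi_a$ for the choice $a = 1/2$. Since the representation $T_{\mathscr P}$ is strongly continuous, we have $\mathcal M_0 = \mathrm{End}\,\mathcal X$, so Theorem \ref{baskth4.1} supplies an admissible triple $(\mathcal M, J_a, \Gamma_a)$ for every $a>0$, with $J_a X = \mathcal T_0(\varphi_a)X$ and $\Gamma_a X = \mathcal T_0(\psi_a)X$.

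The first step is to verify the spectral separation hypothesis \eqref{specond} of Theorem \ref{baskth5.1}. By Lemma \ref{basknewlh2.2}(3),
\[
\Lambda(\mathcal M, \mathcal T_0) \subseteq \overline{\sigma(A)-\sigma(A)} \subseteq \mathbb Z,
\]
because the generator $A$ of the module $(\mathcal X, T_{\mathscr P})$ has integer spectrum (the eigenvalues indexed by $\mathbb J \subseteq \mathbb Z$, as in Examples \ref{exrepU} and \ref{exa5}). Alternatively, Proposition \ref{baskth5.c} applies directly. Setting $a=1/2$ gives $(-2a,-a)\cup(a,2a) = (-1,-\tfrac12)\cup(\tfrac12,1)$, which contains no integer, so \eqref{specond} holds. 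Moreover $[-a,a]\cap\mathbb Z = \{0\}$, which is exactly the situation in which the conclusion of Theorem \ref{baskth5.1} yields a memoryless $JX_*$.

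Next I would read off the norm condition. With $a = 1/2$, inequality \eqref{prob5.4} becomes $\frac{5.4\sqrt 3}{1/2}\|B\| = 10.8\sqrt 3\|B\| < 1$, which is precisely the hypothesis of Theorem \ref{mathrth2}. Therefore all assumptions of Theorem \ref{baskth5.1} are met, and it produces a unique fixed point $X_* \in \mathrm{End}\,\mathcal X$ of equation \eqref{bask3.1} and an invertible intertwining operator $I+\mathcal T_0(\psi_{1/2})X_* \in \mathrm{End}\,\mathcal X$ realizing the similarity of $A-B$ with $A-B_0$, where $B_0 = JX_* = \mathcal T_0(\varphi_{1/2})X_*$ and $\Lambda(B_0, \mathcal T_0) \subseteq \{0\}$.

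Finally, I need to translate ``memoryless'' back into ``diagonal matrix''. By Lemma \ref{comlemma}, $\Lambda(B_0,\mathcal T_0) \subseteq \{0\}$ is equivalent to $\mathcal T_0(t)B_0 = B_0$ for all $t\in\mathbb R$, i.e.\ $T_{\mathscr P}(t)B_0 = B_0 T_{\mathscr P}(t)$ for every $t$; applying the projection $P_j$ and then $P_i$ to the identity $T_{\mathscr P}(t)B_0 T_{\mathscr P}(-t) = B_0$ and using the structure of $T_{\mathscr P}$ from \eqref{bask2.3P} (as observed in Subsection \ref{spsub} for Example \ref{exa5}) shows that $P_i B_0 P_j = e^{i(i-j)t}P_i B_0 P_j$ for all $t\in\mathbb R$, forcing $P_iB_0P_j = 0$ whenever $i\neq j$. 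Thus the matrix of $B_0$ is diagonal, completing the proof. The only non-routine point is recognizing that the choice $a = 1/2$ is the optimum that turns the gap condition of Theorem \ref{baskth5.1} into the cleanest norm bound; no genuine obstacle arises.
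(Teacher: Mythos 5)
Your proposal is correct and is exactly the argument the paper intends: Theorem \ref{mathrth2} is stated as an immediate consequence of the results of Section \ref{basksec4}, namely Theorem \ref{baskth5.1} (via Proposition \ref{baskth5.c}, since $\sigma(A)-\sigma(A)\subseteq\mathbb Z$) with $a=1/2$, which is confirmed by the constant $10.8\sqrt3 = 5.4\sqrt3/(1/2)$ in the hypothesis. Your translation of ``memoryless'' into ``diagonal matrix'' via Lemma \ref{comlemma} and the structure of $T_{\mathscr P}$ is the right closing step.
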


\begin{theorem}\label{mathrth3}
Assume that the matrix of an operator $B\in\mathrm{End}\,\mathcal{X}$ is strictly upper (or lower) triangular.
Then the operator $A-B$ is similar to the operator $A$.
\end{theorem}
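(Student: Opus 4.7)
The plan is to recognize that a strictly upper (lower) triangular operator $B$ with respect to $\mathscr P$ is a hyperanticausal (hypercausal) operator in the module $(\mathrm{End}\,\mathcal X, \mathcal T_0)$, and then apply Theorem \ref{hypcaussim} directly. Since the representation $T_{\mathscr P}$ is strongly continuous, we have $\mathcal M_0 = \mathrm{End}\,\mathcal X$, so $B \in \mathcal M_0$ and the machinery of $\mathcal T_0$ from Section \ref{basksec2} is fully available.

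First I would compute the action of $\mathcal T_0$ on matrix entries. Using $P_k T_{\mathscr P}(t) = T_{\mathscr P}(t) P_k = e^{ikt} P_k$, one gets $P_i (\mathcal T_0(t) B) P_j = e^{it(i-j)} B_{ij}$, and hence $P_i (\mathcal T_0(f) B) P_j = \widehat f(i-j) B_{ij}$ for every $f \in L_1$. Assuming $B$ is strictly upper triangular, so that $B_{ij} = 0$ whenever $i \geq j$, I would next pick, for any given $\lambda \geq 0$, a function $f \in L_1$ with $\widehat f(\lambda) \neq 0$ and $\widehat f(n) = 0$ for every integer $n < 0$ (such an $f$ is produced by a translated trapezoid analogous to $\varphi_a$ from Section \ref{basksec2}). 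For this $f$, every matrix entry $\widehat f(i-j) B_{ij}$ vanishes, so $\mathcal T_0(f) B = 0$ by the resolution-of-the-identity property, placing $\lambda$ outside $\Lambda(B, \mathcal T_0)$. Combined with $\Lambda(B, \mathcal T_0) \subseteq \overline{\sigma(A) - \sigma(A)} \subseteq \mathbb Z$ from Lemma \ref{basknewlh2.2}(3), this yields $\Lambda(B, \mathcal T_0) \subseteq \{-1, -2, \ldots\} \subseteq (-\infty, -1]$, so that $B \in \mathscr{H\!\!A\!\!C}_1$.

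The conclusion then follows by applying Theorem \ref{hypcaussim} with $a = 1/2$ and $\mathcal M = \mathscr{H\!\!A\!\!C}_{2a} = \mathscr{H\!\!A\!\!C}_1$; the theorem requires no norm restriction on $B$ and produces an explicit invertible intertwining operator $I + \Gamma X_*$, establishing similarity of $A - B$ with $A$. The strictly lower triangular case is completely symmetric: one shows $B \in \mathscr{H\!\!C}_1$ and invokes the hypercausal branch of the same theorem.

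The main obstacle, such as it is, is the careful identification of \emph{strict triangularity of the matrix of $B$} with \emph{Beurling spectrum of $B$ contained in $(-\infty, -1]$} (respectively $[1, \infty)$). The analogue for Example \ref{exa5} is asserted in the discussion after Lemma \ref{causidem}, but in the present more general setting with an arbitrary disjunctive resolution of the identity it must be verified directly via the explicit action of $\mathcal T_0$ on matrix entries, as sketched above. Once this spectral containment is in hand, Theorem \ref{hypcaussim} functions as a black box and the proof is essentially complete.
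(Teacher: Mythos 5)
Your proposal is correct and follows exactly the route the paper intends: the paper derives Theorem \ref{mathrth3} as an immediate consequence of Theorem \ref{hypcaussim} via the identification of strict upper (lower) triangularity with hyperanticausality (hypercausality), which is precisely the identification you verify through the formula $P_i(\mathcal T_0(f)B)P_j = \widehat f(i-j)B_{ij}$. Your explicit check that $\Lambda(B,\mathcal T_0)\subseteq(-\infty,-1]$ (respectively $[1,\infty)$) supplies the detail the paper leaves implicit, and the application of Theorem \ref{hypcaussim} with $a=1/2$ is exactly right.
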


We remark that the operator $B$ in the above theorems can be chosen from other $A$-admissible spaces, not just from $\mathcal{M} = \mathcal{M}_0 = \mathrm{End}\,\mathcal{X}$. 
In some cases, for example when $\mathcal{M} = \mathfrak S_2(\mathcal{X})$ is the Hilbert-Schmidt ideal of operators in a Hilbert space $\mathcal X$, the constants in the condition on $\|B\|$ may be significantly improved \cite{BKU19}. 
We also note that the spaces considered in \cite{SSun19} are $A$-admissible.

The results of this section are significant in part because of the many approximate computational methods that are available for banded matrices \cite{GL13}. The similarity established in the above theorems allows one to use those methods to obtain results for matrices that are not initially banded.

We also note that analogs of Theorems \ref{mathrth1}, \ref{mathrth2}, and \ref{mathrth3} hold in the case when the representation comes from formula \eqref{bask2.3} rather than \eqref{bask2.3P}. In that case, the results most closely resemble those in
\cite{BDS11, BKU19, BP17}.
We conclude this section with a comparison of the versions of the method of similar operator used in this paper and in the ones just cited.

Let $\mathcal{X = H}$ be a complex Hilbert space and consider a self-adjoint operator $A: D(A)\subset\mathcal{H}\to\mathcal{H}$. Assume that $A$ has a discrete spectrum $\sigma(A)=\cup_{k\in\mathbb{Z}}\{\lambda_k\}$,
where $\lambda_k$, $k\in\mathbb{Z}$, are semi-simple eigenvalues of finite multiplicity. Let
$P_k=P(\{\lambda_k\}, A)$, $k\in\mathbb{Z}$, be the corresponding spectral projections that form a resolution of the identity. 
We have $AP_k=\lambda_kP_k$, $k\in\mathbb{Z}$. We also assume that the eigenvalues are separated:
\begin{equation}\label{bask7.3}
 d :=\inf_{i\ne j}|\lambda_i - \lambda_j| >0.
\end{equation}

The above conditions are common for all the versions of the method we are comparing (see Example \ref{exrepU} and \cite{BDS11, BKU19, BP17}).

A formula analogous to \eqref{bask2.3} defines a strongly continuous isometric representation  $\mathcal{T}=T_A$, which,
in turn, yields a Banach $L_1(\mathbb{R})$-module  $(\mathcal{H}, T_A)$ whose generator is
the operator $A$. As before, we will identify  operators $X\in\mathrm{End}\,\mathcal{X}$ with their matrices
$X\sim(X_{ij})$, where $X_{ij}=P_iXP_j$. The role of the $A$-admissible space
 $\mathcal{M}$ is once again played by $\mathrm{End}\,\mathcal{X}$.

\begin{lemma}
Given an operator $X\in\mathrm{End}\,\mathcal{H}$ and its matrix  $(X_{ij})$, $i, j\in\mathbb{Z}$, the elements of the matrix of the operator $\mathcal{T}_0(f)X$ are given by $(\widehat{f}(\lambda_i-\lambda_j)X_{ij})$.
\end{lemma}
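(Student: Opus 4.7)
The plan is to compute $P_i(\mathcal{T}_0(f)X)P_j$ directly by using the definition of the module action together with the explicit form of the representation $\mathcal{T} = T_A$ given by \eqref{bask2.3}. The starting point is the formula for the module action on $\mathcal{M}_0$ by $\mathcal{T}_0$, which says that
\[
(\mathcal{T}_0(f)X)x = \int_{\mathbb{R}} f(t)(\mathcal{T}_0(-t)X)x\,dt = \int_{\mathbb{R}} f(t)\mathcal{T}(-t)X\mathcal{T}(t)x\,dt,\quad x\in\mathcal{X}.
\]
So the matrix element $(\mathcal{T}_0(f)X)_{ij} = P_i(\mathcal{T}_0(f)X)P_j$ is obtained by sandwiching the above integral between $P_i$ on the left and $P_j$ on the right.

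The key identity I will use is that the spectral projections $P_k$ act as eigenvectors for $\mathcal{T}$: namely, $\mathcal{T}(t)P_k = P_k\mathcal{T}(t) = e^{i\lambda_k t}P_k$ for each $k\in\mathbb{Z}$ and $t\in\mathbb{R}$, which is immediate from \eqref{bask2.3} and orthogonality of the projections. Applying this identity twice gives
\[
P_i\mathcal{T}(-t)X\mathcal{T}(t)P_j = e^{-i\lambda_i t}P_iXP_j\,e^{i\lambda_j t} = e^{-i(\lambda_i-\lambda_j)t}X_{ij}.
\]

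Substituting this into the integral representation and pulling the constant operator $X_{ij}$ outside yields
\[
P_i(\mathcal{T}_0(f)X)P_j = X_{ij}\int_{\mathbb{R}} f(t)e^{-i(\lambda_i-\lambda_j)t}\,dt = \widehat{f}(\lambda_i-\lambda_j)X_{ij},
\]
as claimed. The exchange of $P_i,P_j$ with the Bochner integral is justified by their boundedness, and the integral converges absolutely since $f\in L_1(\mathbb{R})$ and the integrand is dominated in norm by $|f(t)|\,\|X\|$. I do not anticipate any substantive obstacle: the only mild care needed is the justification of moving the bounded projections inside the integral and the use of the eigen-relation $\mathcal{T}(t)P_k = e^{i\lambda_k t}P_k$, both of which are standard.
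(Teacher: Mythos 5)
Your proof is correct and follows essentially the same route as the paper: both use the eigen-relation $T_A(t)P_k = e^{i\lambda_k t}P_k$ together with the integral formula for the module action $\mathcal T_0(f)X$ to compute $P_i(\mathcal T_0(f)X)P_j$ directly. Your version is slightly more careful about the sign bookkeeping and the interchange of the bounded projections with the Bochner integral, but there is no substantive difference.
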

\begin{proof}
We have $T_A(t)P_i=e^{i\lambda_it}P_i=P_iT_A(t)$ and $P_i(\mathcal{T}_0(t)X)P_i=e^{i(\lambda_i-\lambda_j)t}X_{ij}$. It follows that
$$
P_i(\mathcal{T}_0(f)X)P_i=\int_{\mathbb{R}}f(t)e^{-i(\lambda_i-\lambda_j)t}X_{ij}\,dt=\widehat{f}(\lambda_i-\lambda_j)X_{ij},
$$
and the lemma is proved.
\end{proof}

Consider the functions $\varphi_a$ and $\psi_a$ from \eqref{otrap} and \eqref{psia} with
\begin{equation}\label{bask*}
2a<d.
\end{equation}
Then the matrix of the operator $JX=\mathcal{T}_0(\varphi_a)X$ is defined by
$$
P_i(JX)P_j=
\begin{cases}
0, \quad &i\ne j, \\
X_{ij}, \quad &i=j,
\end{cases}
$$
and the matrix of the operator $\Gamma X=\mathcal{T}_0(\psi_a)X$ -- by
\begin{equation}\label{basknew7.4}
P_i(\Gamma X)P_j=
\begin{cases}
0, \quad &i=j, \\
\frac{X_{ij}}{\lambda_i-\lambda_j}, \quad &i\ne j.
\end{cases}
\end{equation}
We emphasize that the above formulas hold only under condition \eqref{bask*}.

Applying Theorem \ref{baskth5.1} to an operator $A-B$, with $B\in\mathrm{End}\,\mathcal{H}$, we get the following result.

\begin{theorem}
Assume that the spectrum $\sigma(A)$ satisfies conditions \eqref{bask7.3} and \eqref{bask*}. Assume also that the perturbation operator
$B\in \mathrm{End}\,\mathcal{H}$ satisfies
$$
\frac{5.4\sqrt{3}}{a}\|B\|<1.
$$
Then the operator $A-B: D(A)\subset\mathcal{H}\to\mathcal{H}$ is similar to 
$$
A-JX_*=A-\sum_{i\in\mathbb{Z}}P_iX_*P_i,
$$
where $X_*\in\mathrm{End}\,\mathcal{H}$ is the unique solution of the nonlinear equation \eqref{bask3.4}.
The intertwining operator of $A-B$ into $A-JX_*$ is $I+\Gamma X_*$, and its matrix elements are given by
 \eqref{basknew7.4} with $X^*$ in place of $X$. The operator $JX_*\in\mathrm{End}\,\mathcal{H}$
is memoryless and its matrix is diagonal.
\end{theorem}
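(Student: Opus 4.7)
The plan is to derive this theorem as a direct application of Theorem \ref{baskth5.1} to the module $(\mathcal{H}, T_A)$ with $\mathcal{M} = \mathrm{End}\,\mathcal{H}$. First I would verify that the triple $(\mathcal{M}, J, \Gamma)$ with $J = \mathcal{T}_0(\varphi_a)$ and $\Gamma = \mathcal{T}_0(\psi_a)$ is admissible. Since $T_A$ is strongly continuous and isometric, the construction preceding Lemma \ref{basknewlh2.2} gives $\mathcal{M}_0 = \mathrm{End}\,\mathcal{H}$; combined with the fact that $\mathrm{End}\,\mathcal{H}$ is a Banach algebra on which $\Gamma$ is bounded (with $\|\Gamma\|\le\|\psi_a\|_1\le 1.35/a$), the space $\mathcal{M}$ is $A$-admissible in the sense of Definition \ref{adop}, and Theorem \ref{baskth4.1} yields that $(\mathcal{M}, J, \Gamma)$ is an admissible triple.

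Next I would verify the spectral gap condition \eqref{specond}. By Lemma \ref{basknewlh2.2}(3),
\[
\Lambda(\mathcal{M}, \mathcal{T}_0) \subseteq \overline{\sigma(A) - \sigma(A)} = \overline{\{\lambda_i - \lambda_j : i,j\in\mathbb{Z}\}}.
\]
Under \eqref{bask7.3} and \eqref{bask*}, every nonzero element $\lambda_i - \lambda_j$ satisfies $|\lambda_i-\lambda_j|\ge d > 2a$, so $\Lambda(\mathcal{M}, \mathcal{T}_0) \subseteq \{0\}\cup\{\lambda\in\mathbb{R}:|\lambda|\ge d\}$, which clearly avoids $(-2a,-a)\cup(a,2a)$ and meets $[-a,a]$ only at $0$. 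The norm hypothesis $\frac{5.4\sqrt{3}}{a}\|B\|<1$ is precisely condition \eqref{prob5.4}. Hence Theorem \ref{baskth5.1} applies and delivers similarity of $A-B$ to $A-JX_*$ via the invertible intertwining operator $I+\Gamma X_*$, with $X_*\in\mathcal{M}$ characterized as the unique solution of the nonlinear equation \eqref{bask3.4}.

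Finally, I would translate the abstract conclusion into the matrix language of the section. Because $\Lambda(JX_*, \mathcal{T}_0)\subseteq[-a,a]\cap\Lambda(\mathcal{M},\mathcal{T}_0) = \{0\}$, the operator $JX_*$ is memoryless, and Lemma \ref{comlemma} then tells us that $JX_*$ commutes with $A$ and therefore with every spectral projection $P_k$. Invoking the matrix formulas for $\mathcal{T}_0(f)X$ from the lemma that immediately precedes the theorem (valid precisely because of \eqref{bask*}, which forces $\widehat{\varphi}_a(\lambda_i-\lambda_j)$ to equal $\delta_{ij}$ and $\widehat{\psi}_a(\lambda_i-\lambda_j)$ to equal $(\lambda_i-\lambda_j)^{-1}$ for $i\ne j$), one obtains $JX_* = \sum_{i\in\mathbb{Z}} P_i X_* P_i$ with diagonal matrix, while the matrix of $\Gamma X_*$ is given by \eqref{basknew7.4} with $X_*$ in place of $X$.

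No step presents a genuine obstacle here; the proof is a clean specialization of Theorem \ref{baskth5.1}. The only substantive point to highlight is that the single separation assumption \eqref{bask*} simultaneously guarantees the abstract spectral gap \eqref{specond} via Lemma \ref{basknewlh2.2}(3) and the clean diagonal-off-diagonal splitting of $J$ and $\Gamma$ on operator matrices, so the abstract machinery and the concrete matrix description match seamlessly.
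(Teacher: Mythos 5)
Your proposal is correct and follows exactly the route the paper intends: the paper's own ``proof'' is the single sentence ``Applying Theorem \ref{baskth5.1} \dots we get the following result,'' preceded by the lemma computing the matrix of $\mathcal{T}_0(f)X$, and you have simply filled in the verification that $\mathcal{M}_0=\mathrm{End}\,\mathcal{H}$ is $A$-admissible, that \eqref{bask7.3} and \eqref{bask*} force $\overline{\sigma(A)-\sigma(A)}\subseteq\{0\}\cup\{\lambda:|\lambda|\ge d\}$ so that \eqref{specond} holds and $[-a,a]\cap\Lambda(\mathcal{M},\mathcal{T}_0)=\{0\}$, and that the matrix formulas for $J$ and $\Gamma$ follow from the preceding lemma. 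No gaps; this is a faithful specialization of Theorem \ref{baskth5.1}.
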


Thus, the method of diagonalization of the operator $A-B$ employed in this paper coincides with that of \cite{BP17} and \cite{BKU19}. The same can be said about the diagonalization of Dirac operators in \cite{BDS11}. However, in \cite{BDS11, BKU19, BP17}, a different admissible triple was used.
Roughly speaking, in this paper the matrix of the operator $J$ is banded with the width of the band increasing as the number $a$ grows. In \cite{BDS11, BKU19, BP17}, the matrix of $J$ is block diagonal and only one (central) block grows in the situation analogous to the growth of $a$.
As a consequence, the transforms $\Gamma_a$ are also constructed differently.

\section{Additional remarks}\label{discus}
In this section, we gather a few supplementary remarks highlighting the contributions of this paper, as well as their importance and historical contextualization. 

This paper possesses a semi-expository character, primarily aiming to create and present a new modification of the method of similar operators. This modification is tailored for perturbed operators $A-B$, where the perturbation $B$ acts in a large class of Banach spaces, which we called $A$-admissible. The focal point of the paper lies in the development and elucidation of this approach, which has not been pursued in the literature before.

The most general of the presented results about $A$-admissible perturbations is Theorem \ref{baskth4.2}, which establishes similarity of the operator $A-B$ to the operator $A-C$ such that $C$ has a compact Beurling spectrum. The latter property of the operator $C$ allows one to study the spectral properties of the operator $A-B$ more effectively, as well as use computational methods that are not necessarily valid directly for $A-B$. 
It is important to reiterate that Theorem \ref{baskth4.2} stands apart by virtue of its lack of assumptions concerning the norm of the perturbation $B$ or the lacunary nature of the spectrum of $A$ -- prerequisites commonly encountered in the majority of existing literature.

From a historical perspective, a substantial portion of perturbation theory results were originally formulated within the context of Hilbert spaces, as opposed to the broader framework of Banach spaces. We reference  \cite{ DS88III, F65p, K76, MM82, MM84, MS17, SD79, Sh16, T65} as a brief glimpse into the vast array of existing research on this subject.

A significant portion of the results in Hilbert spaces   has been extended to Banach spaces, although often necessitating more stringent  assumptions on the perturbation operator $B$. In the context of similarity, these more stringent requirements frequently manifest through heightened constraints on the norm of $B$ or the space of admissible perturbations. When using the method of similar operators, the main reason for better constants in Hilbert spaces is the existence of better estimates on the norms of the transforms $J$ and $\Gamma$. For example, in most of the Hilbert space settings one has $\|J\|=1$. For a normal operator $A$ that has spectral components separated by $d$ as in \eqref{bask7.3}, one usually uses transforms $\Gamma$ that satisfy $\|\Gamma\| \le \frac5d$; when $A$ is self-adjoint, the estimate improves to $\|\Gamma\| \le \frac\pi{2d}$; if, additionally, the space of admissible perturbations is $\mathfrak S_2(\mathcal H)$, one gets $\|\Gamma\| = \frac1d$. These and other results concerning development of the method of similar operators in Hilbert spaces can be found in \cite[and references therein]{B94, B95, BDS11, BKU19, BP16, KS18}.

The method of similar operators, pioneered by Friedrichs \cite{F65p} and championed in this paper, is considerably less prevalent within the literature when juxtaposed with the resolvent method introduced by Kato \cite{K76}. The latter is based on the integral representation of the spectral projections corresponding to various components of the spectrum $\sigma(A)$. We note that the resolvent method is obviously inapplicable in the general setting of this paper, where the spectrum $\sigma(A)$ is connected. Furthermore, it is worth noting that the resolvent method particularly excels in Hilbert spaces, especially when the operator $B$ is normal or, even better, self-adjoint. In those narrower settings the resolvent method is likely to outperform the method of similar operators. However, the efficacy of the resolvent method within a Banach space framework is significantly curtailed, with a substantial portion of the technique becoming totally infeasible.  In stark contrast, the method of similar operators, which draws its roots from abstract harmonic analysis, remains commendably robust when applied within the context of Banach spaces.

\medskip
\centerline{\bf Acknowledgements}
\medskip

We are happy to dedicate this paper to our friend and colleague Akram Aldroubi on the occasion of his $65^{\mathrm {th}}$ birthday.

\bibliographystyle{abbrv}
\bibliography{refs}

\end{document}